\definecolor{darkred}{rgb}{1,0,0} 
\definecolor{darkgreen}{rgb}{0,0.6,0}
\definecolor{darkblue}{rgb}{0,0,.8}
\def\reflb#1#2{\begingroup
    #2%
    \def\@currentlabel{#2}%
    \phantomsection\label{#1}\endgroup
}
\numberwithin{equation}{section}
\newtheorem {Theorem}{Theorem}
\numberwithin{Theorem}{section}
\newtheorem {Lemma}[Theorem]    {Lemma}
\newtheorem {Proposition}[Theorem]{Proposition}
\newtheorem {Corollary}[Theorem]{Corollary}
\theoremstyle{definition}
\theoremstyle{remark}
\newtheorem{Remark}[Theorem]{Remark}
\newtheorem{Example}[Theorem]{Example}
\def    \eps    {\epsilon}
\newcommand{\CA}{{\mathcal A}}
\newcommand{\CS}{{\mathcal S}}
\newcommand{\supp}{\operatorname{supp}}
\newcommand{\sgn}{\operatorname{sgn}}
\newcommand{\id}{{\mathit id}}
\newcommand{\st}{{\mathit st}}
\newcommand{\const}{{\mathit const}}
\newcommand{\ff}{{\mathfrak f}}
\newcommand{\fh}{{\mathfrak h}}
\newcommand{\tx}{\tilde{x}}
\newcommand{\hmu}{\hat{\mu}}
\newcommand{\tH}{\tilde{H}}
\newcommand{\Mm}{{\mathcal M}}
\newcommand{\PP}{{\mathcal P}}
\def    \C      {{\mathbb C}}
\def    \R      {{\mathbb R}}
\def    \Z      {{\mathbb Z}}
\def    \N      {{\mathbb N}}
\def    \Q      {{\mathbb Q}}
\def    \CP     {{\mathbb C}{\mathbb P}}
\def    \12    {{\frac{1}{2}}}
\def    \p      {\partial}
\def    \im     {\operatorname{im}}
\def    \SH     {\operatorname{SH}}
\def    \HF     {\operatorname{HF}}
\def    \HC     {\operatorname{HC}}
\def    \H     {\operatorname{H}}
\def    \CF     {\operatorname{CF}}
\def    \bx     {\bar{x}}
\def    \MUM  {\operatorname{\mu_{\scriptscriptstyle{M}}}}
\def    \s  {\operatorname{c}}
\def\wh{\widehat}
\newcommand{\tpi}{\tilde{\pi}}
\def   \Dlim {\operatorname{\varinjlim}}
\begin{document}


\setlength{\smallskipamount}{6pt}
\setlength{\medskipamount}{10pt}
\setlength{\bigskipamount}{16pt}





\title[Symplectic Homology of Prequantization Bundles]{On the Filtered
  Symplectic Homology of Prequantization Bundles}

\author[Viktor Ginzburg]{Viktor L. Ginzburg}
\author[Jeongmin Shon]{Jeongmin Shon}

\address{GS and VG: Department of Mathematics, UC Santa Cruz, Santa Cruz, CA
  95064, USA} \email{ginzburg@ucsc.edu}\email{jeshon@ucsc.edu}

\subjclass[2010]{53D40, 37J10, 37J45, 37J55} 

\keywords{Periodic orbits, Reeb flows, Floer and symplectic homology}

\date{\today} 

\bigskip

\begin{abstract}
  We study Reeb dynamics on prequantization circle bundles and the
  filtered (equivariant) symplectic homology of prequantization line
  bundles, aka negative line bundles, with symplectically aspherical
  base. We define (equivariant) symplectic capacities, obtain an upper
  bound on their growth, prove uniform instability of the filtered
  symplectic homology and touch upon the question of stable
  displacement. We also introduce a new algebraic structure on the
  positive (equivariant) symplectic homology capturing the free
  homotopy class of a closed Reeb orbit -- the linking number
  filtration -- and use it to give a new proof of the non-degenerate
  case of the contact Conley conjecture (i.e., the existence of
  infinitely many simple closed Reeb orbits), not relying on contact
  homology.
\end{abstract}

\maketitle

\tableofcontents

\section{Introduction}
\label{sec:intro+results}
In this paper we study dynamics of Reeb flows on prequantization
$S^1$-bundles and the filtered (equivariant) symplectic homology of
the associated prequantization line bundles $E$, aka negative line
bundles, over symplectically aspherical manifolds. The new features in
this case, as compared to the symplectic homology of exact fillings,
come from the difference between the Hamiltonian action (i.e., the
symplectic area), giving rise to the action filtration of the
homology, and the contact action.  We define equivariant symplectic
capacities, obtain an upper bound on their growth, prove uniform
instability of the filtered symplectic homology, and touch upon the
question of stable displacement in $E$. We also introduce a new
algebraic structure on the positive (equivariant) symplectic homology
capturing the free homotopy class of a closed Reeb orbit -- the
linking number filtration. We then use this filtration to give a new
proof of the non-degenerate case of the contact Conley conjecture, not
relying on contact homology.

Prequantization $S^1$-bundles $M$ form an interesting class of
examples to study Reeb dynamics, and, in particular, the question of
multiplicity of closed Reeb orbits with applications, for instance, to
closed magnetic geodesics and geodesics on CROSS's; see, e.g.,
\cite{GGM:CC2}. The range of possible dynamics behavior in this case
is similar to that for Hamiltonian diffeomorphisms and more limited
than for all contact structures where it should be, more adequately,
compared with the class of symplectomorphisms; see
\cite{GG:CC}. Furthermore, in many instances, various flavors of the
Floer-type homology groups associated to $M$ can be calculated
explicitly providing convenient basic tools to study the multiplicity
questions.

Most of the Floer theoretic constructions counting closed Reeb orbits
require a strong symplectic filling $W$ of $M$ and, in general, the
properties of the resulting groups depend on the choice of $W$ unless
$W$ is exact and $c_1(TW)=0$. (The exceptions are the cylindrical
contact homology and the contact homology linearized by an
augmentation, but here we are only concerned with the symplectic
homology.) The most natural filling $W$ of a prequantization 
$S^1$-bundle $M$ is that by the disk bundle or, to be more precise, by the
region bounded by $(M,\alpha)$, where $\alpha$ is the contact form, in
the line bundle $E$. 

However, the filling $W$ is also quite awkward to work with. The main
reason is that $W$ is never exact, although it is aspherical when the
base $B$ is aspherical. As a consequence, the Hamiltonian and contact
actions of closed Reeb orbits differ and the Hamiltonian action,
giving rise to the action filtration on the homology, is not
necessarily non-negative; see Proposition \ref{prop:negative}. The
second difficulty comes from that the natural map
$\pi_1(M)\to \pi_1(W)$ fails, in general, to be one-to-one. This is
the case, for instance, when $B$ is symplectically aspherical: the
fiber, which is not contractible in $M$, becomes contractible in
$W$. This fact has important conceptual consequences. For instance,
the proof of the contact Conley conjecture for prequantization bundles
with aspherical base (i.e., the existence of infinitely many simple
closed Reeb orbits) from \cite{GGM:CC,GGM:CC2}, which relies on the free
homotopy class grading of the cylindrical contact homology, fails to
directly translate to the symplectic homology framework.

One of the goals of this paper is to systematically study the filtered
symplectic homology of $W$, equivariant and ordinary. Note that total
Floer theoretic ``invariants'' such as the equivariant and/or positive
symplectic homology of $E$ has been calculated explicitly; see
\cite{GGM:CC2,Oa:Leray-Serre,Ri:AM}. Moreover, in many cases the
total symplectic homology of $E$ vanishes and this fact alone is
sufficient for many applications. The results of the paper comprise a
part of the second author's Ph.D. thesis, \cite{Sh}.

The paper is organized as follows. In Section \ref{sec:prelim}, we set
our conventions and notation and very briefly recall the constructions
of various flavors of symplectic homology. The only non-standard point
here is the definition of the negative/positive symplectic
homology. Namely, since the filling is not required to be exact, this
homology cannot be defined as the subcomplex generated by the orbits
with negative action. Instead, following \cite{BO17}, it is defined
essentially as the homology of the subcomplex generated by the
constant one-periodic orbits of an admissible Hamiltonian. In Section
\ref{sec:vanishing}, we investigate the consequences of vanishing of
the total symplectic homology. We introduce a class of (equivariant)
symplectic capacities, prove upper bounds on their growth, show that
vanishing is equivalent to a seemingly stronger condition of uniform
instability, and revisit the relation between vanishing of the
symplectic homology and displacement. In Section \ref{sec:prequant},
we specialize these results to prequantization line bundles with
symplectically aspherical base and also briefly touch upon the
question of stable displacement in prequantization bundles.  A new
algebraic structure on the positive (equivariant) symplectic homology
of such bundles -- the linking number filtration -- is introduced in
Section \ref{sec:linking}, where we also calculate the associated
graded homology groups. This filtration is given by the linking number
of a closed Reeb orbit with the base $B$. It is then used in Section
\ref{sec:CCC} to reprove the non-degenerate case of the contact Conley
conjecture circumventing the foundational difficulties inherent in the
construction of the contact homology.

\medskip
\noindent\textbf{Acknowledgments}
The authors are grateful to Frederic Bourgeois, River Chiang, Yasha
Eliashberg, Ba\c sak G\"urel, Kei Irie, Leonardo Macarini and Alex
Oancea for useful discussions. Parts of this work were carried out
during the first author's visit to the National Center for Theoretical
Sciences (NCTS, Taipei, Taiwan) and he would like to thank the Center
for its warm hospitality and support.

\section{Preliminaries}
\label{sec:prelim}
In this section we set our conventions and notation used throughout
the paper and briefly discuss various flavors of contact homology.
\subsection{Conventions}
\label{sec:conventions}

The conventions and notation adopted in this paper are similar to
those used in \cite{GG:convex}, resulting ultimately, through
cancellations of several negative signs, in the same grading and the
action filtration as in \cite{BO:Gysin,BO17}.

Throughout the paper, we usually assume, unless specified otherwise
that the underlying symplectic manifold $(W^{2n},\omega)$ is
symplectically aspherical, i.e.,
$[\omega]|_{\pi_2(W)}=0=c_1(TW)|_{\pi_2(W)}$ and compact with contact
type boundary $M=\p W$. The \emph{symplectic completion} of $W$ is
$$
\widehat{W}=W\cup_{M} \big(M\times [1,\infty)\big),
$$ 
equipped with the symplectic form $\omega$ extended to the cylindrical
part as $d(r\alpha)$, where $\alpha$ is a contact primitive of
$\omega$ on $M$ and $r$ is the coordinate on $[1,\infty)$.

We denote by $\PP(\alpha)$ or $\PP(W)$ the set of contractible in $W$
periodic orbits $x$ of the Reeb flow on $(M^{2n-1},\alpha)$. Since the
form $\omega$ is not required to be exact there are two different ways
to define an action of $x$. One is the \emph{contact action}
$\CA_\alpha(x)$ given by the integral of $\alpha$ over $x$, i.e., the
period of $x$ as a closed orbit of the Reeb flow. The second one is
the \emph{symplectic or Hamiltonian action} $\CA_\omega(x)$. This is
the symplectic area bounded by $x$, i.e., the integral of $\omega$
over a disk bounded by $x$ in $W$. By Stokes' formula,
$\CA_\alpha(x)=\CA_\omega(x)$ when $x$ is contractible in $M$, but in
general $\CA_\alpha(x)\neq\CA_\omega(x)$. We denote the resulting
contact and symplectic action spectra by $\CS(\alpha)$ and
$\CS_\omega(W)$, respectively.

The circle $S^1=\R/\Z$ plays several different roles throughout the
paper. We denote it by $G$ when we want to emphasize the role of the
group structure on~$S^1$. 

The Hamiltonians $H$ on $\wh{W}$ are always required to be
one-periodic in time, i.e., $H\colon S^1\times \wh{W}\to \R$. In fact,
most of the time the Hamiltonians we are actually interested in are
\emph{autonomous}, i.e., independent of time. The (time-dependent)
Hamiltonian vector field $X_H$ of $H$ is given by the Hamilton
equation $i_{X_H}\omega=-dH$. For instance, on the cylindrical part
$M\times [1,\infty)$ with $\omega=d(r\alpha)$ the Hamiltonian vector
field of $H=r$ is the Reeb vector field of $\alpha$.

We focus on contractible one-periodic orbits of $H$. Such orbits can
be identified with the critical points of the \emph{action functional}
$\CA_H\colon \Lambda\to\R$ on the space $\Lambda$ of contractible
loops $x$ in $\wh{W}$ given by
\begin{equation}
\label{eq:action-0}
\CA_{H}(x) =\CA_\omega(x)-\int_{S^1} H(t,x(t))\,dt.
\end{equation}
We will always require $H$ to have the form $H=\kappa r+c$, where
$\kappa\not\in\CS_\alpha(\alpha)$, outside a compact set. Under this
condition the Floer homology of $H$ is defined; see, e.g.,
\cite{Vi:GAFA}. Note, however, that the homology depends on $\kappa$.

The \emph{action spectrum} of $H$, i.e., the collection of action
values for all contractible one-periodic orbits of $H$, will be
denoted by $\CS(H)$.  When $H$ is autonomous, a one-periodic orbit $y$
is said to be a \emph{reparametrization} of $x$ if $y(t)=x(t+\theta)$
for some $\theta\in G=S^1$.  Two one-periodic orbits are said to be
\emph{geometrically distinct} if one of them is not a
reparametrization of the other. We denote by $\PP(H)$ the collection
of all geometrically distinct contractible one-periodic orbits of $H$.

With our sign conventions in the definitions of $X_H$ and $\CA_H$, the
Hamiltonian actions on $x$ converge to $\CS_\omega(x)$ in the
construction of the symplectic homology. In particular,
$\CS(H)\to\CS_\omega(W)\cup\{0\}$.

We normalize the \emph{Conley--Zehnder index}, denoted throughout the
paper by $\mu$, by requiring the flow for $t\in [0,\,1]$ of a small
positive definite quadratic Hamiltonian $Q$ on $\R^{2n}$ to have index
$n$. More generally, when $Q$ is small and non-degenerate, the flow
has index equal to $(\sgn Q)/2$, where $\sgn Q$ is the signature of
$Q$.  In other words, the Conley--Zehnder index of a non-degenerate
critical point $x$ of a $C^2$-small autonomous Hamiltonian $H$ on
$W^{2n}$ is equal to $n-\MUM$, where $\MUM=\MUM(H)$ is the Morse index
of $H$ at $x$. The \emph{mean index} of a periodic orbit $x$ will be denoted
by $\hmu(x)$; see, e.g., \cite{Lo,SZ} for the definitions and also
\cite{GG:convex} for additional references and a detailed discussion.

We denote by $\HF(H)$ the Floer homology of $H$ (when it is defined)
and by $\HF^I(H)$ the filtered Floer homology, where $I\subset \R$ is
an interval, possibly infinite, with end points not in
$\CS(H)$. Likewise, the (filtered) $G=S^1$-equivariant Floer homology
is denoted by $\HF^G(H)$ and $\HF^{G,I}(H)$; see, e.g.,
\cite{BO:Gysin,GG:convex} and references therein.  Note that for
$I=\R$ the filtered homology groups turn into total Floer homology
$\HF(H)$ and $\HF^G(H)$. These homology groups are graded by the
Conley--Zehnder index, but the grading is suppressed in the notation
when it is not essential.

Our choice of signs in \eqref{eq:action-0} effects the signs in the
Floer equation. Recall that the Floer equation is the
$L^2$-anti-gradient flow equation for $\CA_H$ on $\Lambda$ with
respect to a metric $\left<\cdot\,,\cdot\right>$ on $\wh{W}$
compatible with $\omega$:
$$
\p_s u =-\nabla_{L^2}\CA_H(u),
$$
where $u\colon \R\times S^1\to V$ and $s$ is the coordinate on
$\R$. Explicitly, this equation has the form
\begin{equation}
\label{eq:Floer-0}
\p_s u + J\p_t u=\nabla H,
\end{equation}
where $t$ is the coordinate on $S^1$. Here the almost complex
structure $J$ is defined by the condition
$\left<\cdot\,,\cdot\right>=\omega(J\cdot\,,\cdot)$ making $J$ act on
the first argument in $\omega$ rather than the second one, which is
more common, to ensure that the left hand side of the Floer equation
is still the Cauchy--Riemann operator $\bar\p_J$. (Thus $J=-J_0$,
where $J_0$ is defined by acting on the second argument in $\omega$,
and $X_H=-J\nabla H$.) Note, however, that now the right hand side of
\eqref{eq:Floer-0} is $\nabla H$ with positive sign.

The differential in the Floer or Morse complex is defined by counting
the downward Floer or Morse trajectories. As a consequence, a monotone
increasing homotopy of Hamiltonians induces a continuation map
preserving the action filtration in homology. (Clearly, $H\geq K$ on
$S^1\times \wh{W}$ if and only if $\CA_H\leq \CA_K$ on $\Lambda$.)
Thus we have natural maps $\HF^I(K)\to \HF^I(H)$ and similar maps in
the equivariant setting.

\subsection{Different flavors of symplectic homology}
\label{sec:sympl_hom}
Our main goal in this section is to recall the definition of several
kinds of symplectic homology groups associated with a compact
symplectic manifold $W$ with contact type boundary $M$. Our treatment
of the subject is intentionally brief, for the most part the material
is standard or nearly standard, and we refer the reader to numerous
other sources for a more detailed discussion; see, e.g.,
\cite{BO:Duke,BO:Exact,BO:Gysin,BO17,CO,GG:convex,Se,Vi:GAFA} and
references therein. Throughout the paper, all homology groups are
taken with rational coefficients unless specifically stated
otherwise. This choice of the coefficient field is essential, although
suppressed in the notation, and some of the results are simply not
true when, say, the coefficient field has finite characteristic.

Recall that a Hamiltonian $H\colon S^1\times \wh{W}\to \R$ is said to
be \emph{admissible} if $H=\kappa r+c$, where
$\kappa\not\in\CS_\alpha(\alpha)$, outside a compact set and $H\leq 0$
on $W$. The former condition guarantees that the (equivariant) Floer
homology of $H$ is defined. The \emph{(equivariant) filtered
  symplectic homology} groups of $W$ are, by definition, the limits
$$
\SH^I(W)=\varinjlim_H\HF^I(H)
$$
and
$$
\SH^{G,I}(W)=\varinjlim_H\HF^{G,I}(H)
$$
taken over all admissible Hamiltonians $H$.  It is sufficient to take
the direct limit over a cofinal sequence.  For instance, we can
require that $H=\const<0$ on $W$ and that $H$ depends only on $r$ on
the cylindrical part. Just as the Floer homology, the symplectic
homology groups are graded by the Conley--Zehnder index. The
\emph{total} (equivariant) symplectic homology groups $\SH(W)$
(respectively, $\SH^G(W)$) are obtained by setting $I=\R$.

Note that the choice of the contact primitive $\alpha$ on $M$
implicitly enters the definition of the homology. However, one can
show that the filtered homology is independent of $\alpha$; see
\cite{Vi:GAFA}.

The (equivariant) homology comprises roughly speaking two parts: one
-- the ``negative'' homology -- coming from the constant orbits of
$H$ and the other -- the positive'' part -- generated by the
non-constant orbits. Let us describe the construction in the
non-equivariant setting. The equivariant case can be dealt with in a
similar fashion.

Assume first that $\alpha$ on $M$ is non-degenerate. This can always
be achieved by replacing $M$ by its $C^\infty$-small perturbation. Let
$H$ be a non-positive $C^2$-small Morse function on $W$ and a monotone
increasing, convex function $h$ of $r$ on the cylindrical part such
that $h''>0$ in the region containing one-periodic orbits of
$H$. Clearly, $H$ is a Morse--Bott non-degenerate Hamiltonian and the
Hamiltonians meeting the above conditions form a cofinal family. Let
$\CF^-(H)$ be the subspace in the Floer complex, or, to be more
precise, the Morse--Bott Floer complex $\CF(H)$ of $H$ generated by
the critical points of $H$; see, e.g., \cite{BO:Duke}. The key
observation now is that $\CF^-(H)$ is actually a subcomplex of
$\CF(H)$; see \cite[Rmk.\ 2]{BO17}. This is absolutely not obvious and
the reason is that, as shown in \cite[p.\ 654]{BO:Exact} (see also
\cite[Lemma 2.3]{CO}), a Floer trajectory asymptotic at $+\infty$ to a
periodic orbit on a level $r=r_0$ cannot stay entirely in the union of
$W$ with the domain $r\leq r_0$. Therefore, by the standard maximum
principle such a trajectory cannot be asymptotic to a critical point
of $H$ in $W$ at $-\infty$. Hence $\CF^-(H)$ is closed under the Floer
differential.  Now we can interpret
$$
\CF^+(H)=\CF(H)/\CF^-(H),
$$
as a Morse--Bott type Floer complex arising from the non-trivial
one-periodic orbits of $H$. We denote the resulting negative/positive
Floer homology by $\HF^\pm(H)$.

Passing to the limit over $H$, we obtain the \emph{negative/positive}
symplectic homology groups $\SH^\pm(W)$, which fit into the long exact
sequence
\begin{equation}
\label{eq:exact-seq}
\ldots \to \SH^-(W)\to \SH(W)\to \SH^+(W)\to \ldots .
\end{equation}

When $\alpha$ is degenerate, we approximate it by non-degenerate forms
$\alpha'$ (or, equivalently, approximate $W$ by small perturbations
$W'$ in $\wh{W}$ with non-degenerate, in the obvious sense,
characteristic foliation), and pass to the limit as
$\alpha'\to\alpha$. It is easy to see that the resulting
negative/positive homology is well defined and we still have the long
exact sequence \eqref{eq:exact-seq}. We will give a slightly different
description of the positive homology in Section \ref{sec:linking-def}.

Note that this construction works essentially under no restrictions on
$W$ as long as the Floer homology of $H$ is defined; e.g., $W$ can be
weakly monotone. Moreover, the Bourgeois-Oancea ``maximum principle''
argument also gives a filtration of the Floer homology of $H$ by the
``level of $r$'' with $\HF^-(H)$ lying the lowest
level. (Alternatively, one can use the approach from
\cite[Appendix D]{McLR}.)

The homology $\HF^\pm(H)$ inherits the action filtration in the
obvious way and thus we have the groups $\HF^{\pm,I}(H)$, where the
end points of $I$ are required to be outside $\CS_\omega(H)$. As a
consequence, we obtain the filtered groups $\SH^{\pm,I}(W)$ with the
end points of $I$ outside $\CS_\omega(W)\cup\{0\}$. Occasionally, we
will use the notation $\SH^{\pm,(-\infty,\,0]}(W)$ (respectively,
$\SH^{(-\infty,\,0]}(W)$) for the inverse limit of the groups
$\SH^{\pm,(-\infty,\,\eps]}(W)$ (respectively,
$\SH^{(-\infty,\,\eps]}(W)$) as $\eps\searrow 0$. 

The long exact sequence \eqref{eq:exact-seq} still holds for the
filtered groups $\SH^{\pm,I}(H)$ and $\SH^I(W)$.  Since the constant
orbits of $H$ have non-positive action, $\SH^{-,I}(W)=0$ if
$I\subset (0,\,\infty)$ and there is a natural map
$$
\SH^-(W)\to \SH^{(-\infty,\,0]}(W).
$$
When the form $\omega$ is exact, this map is an isomorphism. Although
we do not have an explicit example, there seems to be no reason to
expect this map to be either one-to-one or onto when $\omega$ is just
aspherical; cf.\ Proposition \ref{prop:negative} and
\cite{Oa:Leray-Serre}. (Hence the terms ``negative/positive''
symplectic homology is somewhat misleading.)

These constructions extend to the equivariant setting in the standard
way.

A word is also due on the functoriality and invariance of symplectic
homology. When $W$ is exact the subject is quite standard and treated
in detail and greater generality in numerous papers starting with
\cite{Vi:GAFA}; see, e.g., \cite{CO,Gutt,Se} and references therein.
However, without some form of the exactness condition, the questions
of functoriality and invariance of the homology are less
understood. In fact, already when $W$ is aspherical, functoriality
becomes a rather delicate question and we opt here for the minimalist
approach to it which however is sufficient for our purposes.

Let $X$ be a Liouville vector field on $\wh{W}\setminus Z$, where
$Z\subset W$ is a compact set, which agrees with $r\p_r$ on the
cylindrical part of $\wh{W}$. (For instance, we can take as $Z$ a
closed codimension-two submanifold such that $[Z]\in \H_{2n-2}(W)$ is
Poincar\'e dual to the relative cohomology class in $\H^2(W,M)$ of the
pair $(\omega,\alpha_0)$.) Let $M_\tau$, $\tau\in [0,\,1]$, be a
family of closed hypersurfaces smoothly depending on $\tau$, enclosing
$Z$ and transverse to $X$. We denote by $W_\tau\supset Z$ the domain
in $\wh{W}$ bounded by $M_\tau$. Assume furthermore that $M_0=M$, and
hence $W_0=W$. Then, as is easy to see, the total (equivariant,
positive, negative, etc.) symplectic homology of $W_\tau$ is
independent of $\tau$; see \cite{Vi:GAFA}. For the filtered homology,
this is no longer true. However, when $W_1\supset W$, there is a
natural map from the filtered homology of $W_1$ to the filtered
homology of $W$, a very particular case of the ``Viterbo transfer''
morphism, which is essentially given by a monotone homotopy of the
Hamiltonians. On the level of total homology, this map is an
isomorphism.

The relations between the equivariant and non-equivariant symplectic
homology are similar to the relations between the ordinary equivariant
and non-equivariant homology.

On the one hand, we have the Gysin exact sequence for the
positive/negative and filtered symplectic homology (see
\cite{BO:Exact,BO:Gysin}):
$$
\ldots\to \SH_*^{\star}(W)\to \SH_*^{\star, G}(W)\stackrel{D}{\to}
\SH_{*-2}^{\star, G}(W)\to \SH_{*-1}^{\star}(W)\to\ldots
$$
where $\star=\{\pm,I\}$ (in all combinations including $\star=I$) and
we refer to $D$ as the \emph{shift operator}; see also
\cite{GG:convex}. As a consequence, vanishing of $\SH^{\star,G}(W)$
implies vanishing of $\SH^\star(W)$.

On the other hand, there is a Leray--Serre type spectral sequence
starting with $E^2=\SH_*^\star(W)\otimes\H_*(\CP^\infty)$, where
$\star=\pm$ or nothing, and converging to $\SH^{\star,G}(W)$; see
\cite{Hu,BO:Gysin,BO17,Se,Vi:GAFA}. As a consequence, vanishing of
$\SH_*^\star(W)$ (with $\star=\pm$ or nothing) implies and is, by the
Gysin sequence, equivalent to vanishing of $\SH^{\star,G}(W)$.

\section{Vanishing of symplectic homology and its consequences}
\label{sec:vanishing}
In this section we analyze general quantitative and qualitative
consequences of vanishing of the symplectic homology, focusing mainly on
symplectically aspherical manifolds.

\subsection{Homology calculations and equivariant capacities}
\label{sec:hom-cap}
The condition that $\SH(W)=0$ readily lends itself for an explicit
calculation of the (equivariant) positive symplectic homology, which
then can be used to define several variants of the homological symplectic
capacities.  We start with a calculation of the negative and positive
equivariant symplectic homology of $W$.

\begin{Proposition}
\label{prop:hom-calc}
Assume that $W^{2n}$ is symplectically aspherical and $\SH(W)=0$. Then
we have the following natural isomorphisms:
\begin{itemize}
\item[\rm{(i)}] $\SH^-(W)=\H_*(W,\p W)[-n]$ and $\SH^{-,G}(W)=\H_*(W,\p
  W)\otimes \H_*(\CP^\infty)[-n]$;
\item[\rm{(ii)}] $\SH^+(W)=\H_*(W,\p W)[-n+1]$ and 
\begin{equation}
\label{eq:tensor}
\SH^{+,G}(W)=\H_*(W,\p W)\otimes \H_*(\CP^\infty)[-n+1];
\end{equation}
\item[\rm{(iii)}] combined with the identification \eqref{eq:tensor}, the
  Gysin sequence shift map
$$
\SH^{+,G}_{r+2}(W)\stackrel{D}{\longrightarrow}\SH^{+,G}_{r}(W)
$$
is the identity on the first factor and the map
$\H_{q+2}(\CP^\infty)\to \H_{q}(\CP^\infty)$ on the second, given by
the pairing with a suitably chosen generator of
$\H^{2}(\CP^\infty)$. In particular, $D$ is an isomorphism when
$r\geq n+1$.
\end{itemize}
\end{Proposition}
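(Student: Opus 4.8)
The plan is to exploit the long exact sequence \eqref{eq:exact-seq} together with the vanishing hypothesis $\SH(W)=0$, so that the connecting maps give isomorphisms $\SH^+_*(W)\cong\SH^-_{*-1}(W)$, and similarly in the equivariant setting. Thus everything reduces to computing the negative symplectic homology, which by construction is controlled by the constant orbits of a $C^2$-small admissible Hamiltonian. Concretely, I would take a cofinal family of Hamiltonians $H$ which are $C^2$-small negative Morse functions on $W$ (with the usual convex radial behavior on the cylindrical end). The subcomplex $\CF^-(H)$ generated by the critical points of $H$ is, up to the index shift built into our Conley--Zehnder normalization, the Morse complex of $H$ computing $\H_*(W,\p W)$: since $H<0$ on $W$ and $H$ is increasing in $r$ on the collar, the negative gradient flow points inward along $\p W$, so the Morse homology of $\CF^-(H)$ is $\H_*(W,\p W)$. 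The normalization that a $C^2$-small Morse Hamiltonian has $\CZ$-index $n-\MUM$ produces the degree shift $[-n]$, giving part (i) in the non-equivariant case. For the equivariant case one runs the Borel construction: the $S^1$-equivariant complex of the constant orbits is the Morse complex tensored with $\H_*(\CP^\infty)$ (the equivariant differential has no room to mix the two factors at the constant-orbit level since the constant orbits carry the trivial $S^1$-action), yielding $\H_*(W,\p W)\otimes\H_*(\CP^\infty)[-n]$. Passing to the limit over $H$ is harmless because the family is already cofinal and the groups stabilize.

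With (i) in hand, part (ii) is immediate from the (filtered and total) long exact sequence: $\SH(W)=0$ forces $\SH^+_{*}(W)\cong\SH^-_{*-1}(W)$ via the connecting homomorphism, and likewise $\SH^{+,G}_{*}(W)\cong\SH^{-,G}_{*-1}(W)$, which by (i) gives the stated isomorphisms with the degree shift improved by one to $[-n+1]$.

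For (iii), the point is to track the Gysin sequence shift operator $D$ through these identifications. Here I would use naturality of the Gysin sequence with respect to the inclusion of the constant-orbit subcomplex: on $\SH^{-,G}$, after the identification in (i), the operator $D$ is visibly the operator induced by the standard $S^1$-equivariant shift on $\H_*(\CP^\infty)$ — i.e. cap product with a generator of $\H^2(\CP^\infty)$ — acting trivially on the $\H_*(W,\p W)$ factor, because the Borel model for the constant orbits is literally $\H_*(W,\p W)\otimes\H_*(\CP^\infty)$ with $D$ acting only on the second tensor factor. Transporting this through the connecting isomorphism of the Gysin-compatible long exact sequence (the Gysin sequences for $\SH^\pm$ and $\SH$ form a commuting ladder, see \cite{BO:Exact,BO:Gysin}) shows $D$ on $\SH^{+,G}$ has the same shape: identity on $\H_*(W,\p W)$, pairing with a generator of $\H^2(\CP^\infty)$ on $\H_*(\CP^\infty)$. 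Since that pairing map $\H_{q+2}(\CP^\infty)\to\H_q(\CP^\infty)$ is an isomorphism for $q\ge 0$, and the degree bookkeeping (with the $[-n+1]$ shift) translates $q\ge 0$ into $r\ge n+1$, we get that $D$ is an isomorphism in that range.

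The main obstacle I anticipate is not in any single homological step but in being careful about \emph{which} model computes $\SH^-$ when $\alpha$ is degenerate, and about the compatibility of all the identifications with the action filtration and with the Gysin ladder. In particular, in the degenerate case one must argue via the non-degenerate approximations $\alpha'\to\alpha$ that the constant-orbit subcomplex still computes $\H_*(W,\p W)$ in the limit and that $D$ still acts as claimed; and one must make sure the cap-product-with-generator description of $D$ on the $\CP^\infty$ factor is the correct one under our sign and grading conventions (this is precisely why the statement says ``a suitably chosen generator''). These are the points where I would spend the most care, whereas the long-exact-sequence manipulations and the Morse-theoretic computation of $\H_*(W,\p W)$ are routine.
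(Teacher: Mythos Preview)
Your proposal is correct and follows essentially the same approach as the paper: compute $\SH^-$ (and $\SH^{-,G}$) from the constant-orbit subcomplex via Morse theory, use the long exact sequence together with $\SH(W)=0$ (and the equivalent $\SH^G(W)=0$) to identify $\SH^+_*\cong\SH^-_{*-1}$ and $\SH^{+,G}_*\cong\SH^{-,G}_{*-1}$, and then read off the action of $D$ on $\SH^{+,G}$ by transporting the obvious description of $D$ on $\SH^{-,G}=\H_*(W,\partial W)\otimes\H_*(\CP^\infty)$ through the connecting isomorphism. The only cosmetic difference is that the paper first derives the isomorphism range $r\ge n+1$ for $D$ from the Gysin sequence using $\SH^+_q(W)=0$ for $q\ge n+2$, whereas you extract it directly from the cap-product description; both routes are equivalent.
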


\begin{proof}
  Assertion (i) is an immediate consequence of the definitions and the
  condition that $W$ is symplectically aspherical; see, e.g.,
  \cite{BO:Gysin,Vi:GAFA}. With our grading conventions (which
  ultimately result in the same grading as in \cite{BO:Gysin}), we
  have
\begin{align*}
& \SH^-_r(W) = \H_{n+r}(W, \p W) ,\\
& \SH^{-,G}_r(W) = \bigoplus_{p+q=r} \H_{p+n}(W, \p W) \otimes \H_q(\C P^{\infty}),
\end{align*}
where all homology groups are taken with coefficients in $\Q$. In
particular, as $W$ is oriented,
\begin{align*}
& \SH^-_n(W) = \Q ,\\
& \SH^-_q(W) = 0  \,\,\, \mbox{if} \,\,\, q\geq n+1
\end{align*}
Combining the assumption $\SH(W) = 0$ with the long
exact sequence 
\begin{center}
\begin{tikzpicture}
\node (a1) {$\SH(W)$} ;
\node [right of = a1, xshift=30mm](a2) {$\SH^+(W)$} ;
\node [below of = a1, yshift=-12mm](b1) {};
\node [right of =b1, xshift=10mm](b2) {$\SH^-(W)$};
\path[draw, ->](a1) -- node[yshift=3mm]{} (a2);
\path[draw, ->](b2) -- node[yshift=0mm, xshift=-4mm]{} (a1);
\path[draw, ->](a2) -- node[yshift=0mm, xshift=5mm]{$[-1]$} (b2);
\end{tikzpicture}
\end{center}
we see that $\SH_{q+1}^+(W) = \SH_{q}^-(W)$. Hence, we have
\begin{align}\label{zero}
& \SH_{n+1}^+(W) = \SH_{n}^-(W) = \Q , \\ \nonumber
& \SH_{q }^+ (W) = 0 \,\mbox{ if }\, q \geq n+2.
\end{align}
This proves the second assertion. 

Next consider the Gysin sequence
\begin{center}
\begin{tikzpicture}
\node (a1) {$\SH_*^{+, G}(W)$} ;
\node [right of = a1, xshift=30mm](a2) {$\SH_{*-2}^{+, G}(W)$} ;
\node [below of = a1, yshift=-12mm](b1) {};
\node [right of =b1, xshift=10mm](b2) {$\SH_*^{+}(W)$};
\path[draw, ->](a1) -- node[yshift=3mm]{$D$} (a2);
\path[draw, ->](b2) -- node[yshift=0mm, xshift=-4mm]{} (a1);
\path[draw, ->](a2) -- node[yshift=0mm, xshift=5mm]{$[+1]$} (b2);
\end{tikzpicture}
\end{center}
where $D$ is the shift operator. Then 
$$
\SH_{q+2}^{+,G}(W) \cong \SH_{q}^{+,G}(W) \,\,\,\mbox{ if }\,\,\, q
\geq n+1.
$$ 
Recall also from Section \ref{sec:sympl_hom} that $\SH(W) = 0$ if and
only if $\SH^G(W) = 0$.  From the long exact sequence, we see that
$$
\SH_{r+1}^{+,G}(W) = \SH_{r}^{-,G}(W)= \bigoplus_{p+q=r} \H_{p+n}(W, \p W)
\otimes \H_q(\C P^{\infty}).
$$
This isomorphism commutes with $D$ and, on the right, $D$ is given by
the pairing
$\H_q(\C P^{\infty})\to \H_{q-2}(\C P^{\infty})$ with a generator of
$\H^2(\C P^{\infty})$. This proves assertion (iii) and completes the
proof of the theorem.
\end{proof}

With this calculation in mind, we are in the position to define
(equivariant) homological symplectic capacities, aka spectral
invariants or action selectors, depending on the perspective. The
construction follows the standard path which goes back to
\cite{EH,HZ,Sc,Vi:gen}. (See also \cite{GG:convex,GH} for a recent
detailed treatment in the case where $W$ is a ball.)

To a non-zero class $\beta\in \SH^{+}(W)$, we associate the ``capacity''
$$
\s(\beta,W)=\inf\{a\in\R\mid \beta\in\im(i_a) \}\in\R,
$$
where the map $i_a\colon\SH^{+,(-\infty,a)}(W)\to \SH^{+}(W)$ is
induced by the inclusion of the complexes. (When $\beta=0$, we have,
by defintion,  $\s(\beta,W)=-\infty$.) This capacity can be
viewed as a function of $\beta$ or $W$. In the latter case,
$\s(\beta,W)$ has all expected features of a symplectic capacity as
long as $W$ varies within a suitably chosen class of manifolds with
naturally isomorphic homology groups $\SH^+(W)$. (We omit a
detailed and formal discussion of the general capacity properties of
$\s(\beta,W)$ and other capacities introduced below, for they are not
essential for our purposes.) For $\beta\in \SH^{+,G}(W)$, the
equivariant capacity $\s^G(\beta,W)$ is defined in a similar fashion.

By assertion (ii) of Proposition \ref{prop:hom-calc}, every class
$\zeta\in \H_*(W,\p W)$ gives rise to class $\zeta^+\in\SH^{+}(W)$ and
we set $\s_\zeta(W)=\s(\zeta^+,W)$. The capacity arising from the unit
$\zeta= [W,\p W]$ is of particular interest and we denote it by
$\s(W)$. Likewise, by \eqref{eq:tensor}, we can associate to $\zeta$ a
sequence of classes
$\zeta^G_k=\zeta^+\otimes\sigma_k\in \SH^{+,G}(W)$,
$k=0,\,1\,,2,\ldots$, where $\sigma_k$ is a generator in
$\H_{2k}(\CP^\infty)$ and $D(\zeta^G_{k+1})=\zeta^G_k$, and we set
$$
\s^G_{\zeta,k}(W):=\s(\zeta^G_k,W).
$$
When $\zeta=[W,\p W]$, we will simply write
$\s^G_k:=\s^G_{\zeta,k}$. The operator $D$ does not increase the
action filtration (see, e.g., \cite{BO:Gysin,GG:convex}), and hence
\begin{equation}
\label{eq:incr}
\s_{\zeta,0}^G(W)\leq \s_{\zeta,1}^G(W)\leq \s_{\zeta,2}^G(W)\leq\ldots.
\end{equation}

\begin{Lemma}
\label{lemma:pos}
The capacities are non-negative:
\begin{equation}
\label{eq:pos}
\s_\zeta(W)\geq 0 \textrm{ and } \s_{\zeta,k}^G(W)\geq 0
\end{equation}
and 
\begin{equation}
\label{eq:eqv-noneqv}
\s^G_{\zeta,0}(W)\leq \s_\zeta(W).
\end{equation}
\end{Lemma}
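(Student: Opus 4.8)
The three assertions are proved by chasing the action filtration through the long exact sequences and spectral sequences relating $\SH^-$, $\SH$, $\SH^+$ and their equivariant analogues. The key point for non-negativity is that the negative homology $\SH^{-}$ is supported in action window $(-\infty,0]$: since the constant orbits of an admissible Hamiltonian have non-positive action, $\SH^{-,(0,\infty)}(W)=0$, so by the long exact sequence \eqref{eq:exact-seq} in its filtered form the connecting map $\SH^{-}(W)\to\SH^{(-\infty,\eps]}(W)$ and the quotient map to $\SH^{+}(W)$ interact so that any class in $\SH^{+}(W)$ of negative action would have to come from $\SH(W)$ of negative action — but $\SH(W)=0$. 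Concretely: for $a\le 0$ the inclusion-induced map $i_a\colon \SH^{+,(-\infty,a)}(W)\to\SH^{+}(W)$ factors (up to the connecting map) through $\SH^{(-\infty,a)}(W)$, which vanishes for $a<0$ because $\SH(W)=0$ forces $\SH^{(-\infty,a)}(W)\cong\SH^{-,(-\infty,a)}(W)$ via the filtered exact sequence, and the latter is all of $\SH^{-}(W)=\H_*(W,\p W)[-n]$ only in the limit $a\to 0^+$. I would therefore first establish that for every $a<0$ one has $\SH^{+,(-\infty,a)}(W)=0$, which is immediate once we know $\SH^{-,(-\infty,a)}(W)\to\SH^{(-\infty,a)}(W)$ is onto (it is, by the filtered version of \eqref{eq:exact-seq} together with $\SH^{+,(-\infty,a)}$ being generated by nonconstant orbits of positive — in particular, not-too-negative — action in the relevant cofinal family) and $\SH^{(-\infty,a)}(W)$ maps isomorphically enough into $\SH(W)=0$. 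Hence no nonzero class of $\SH^{+}(W)$ lies in the image of any $i_a$ with $a<0$, giving $\s(\zeta^{+},W)\ge 0$, i.e. $\s_\zeta(W)\ge 0$. The equivariant statement $\s^{G}_{\zeta,k}(W)\ge 0$ follows by the identical argument applied to the equivariant long exact sequence, using that $\SH^{G}(W)=0$ (equivalent to $\SH(W)=0$, as recalled at the end of Section \ref{sec:sympl_hom}) and that the constant-orbit generators still have non-positive action.

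For \eqref{eq:eqv-noneqv}, the plan is to use the Leray–Serre type spectral sequence (or, equivalently, the Gysin sequence) relating $\SH^{+}$ and $\SH^{+,G}$, which is compatible with the action filtration. The class $\zeta^{G}_0=\zeta^{+}\otimes\sigma_0\in\SH^{+,G}(W)$ is, under the identification of Proposition \ref{prop:hom-calc}(ii)–(iii), the image of $\zeta^{+}$ under the natural ``inclusion of the fiber'' map $\SH^{+}(W)\to\SH^{+,G}(W)$ (the edge morphism of the spectral sequence, sending $x\mapsto x\otimes\sigma_0$), and this map commutes with the inclusion-induced maps $i_a$ from the $(-\infty,a)$-filtered groups because the spectral sequence is filtered. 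Therefore, if $\zeta^{+}\in\im\big(\SH^{+,(-\infty,a)}(W)\to\SH^{+}(W)\big)$ for some $a$, then applying the edge map shows $\zeta^{G}_0\in\im\big(\SH^{+,G,(-\infty,a)}(W)\to\SH^{+,G}(W)\big)$, whence $\s^{G}_{\zeta,0}(W)\le a$. Taking the infimum over all such $a$ gives $\s^{G}_{\zeta,0}(W)\le\s_\zeta(W)$, which is exactly \eqref{eq:eqv-noneqv}.

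\textbf{Main obstacle.} The delicate point is the first step: verifying that $\SH^{+,(-\infty,a)}(W)=0$ for $a<0$, i.e. that the filtered negative/positive long exact sequence behaves as expected even though $\omega$ is only aspherical and not exact. As the authors themselves flag in the paragraph preceding Proposition \ref{prop:hom-calc} (see also the discussion around the map $\SH^{-}(W)\to\SH^{(-\infty,0]}(W)$), the map from $\SH^{-}$ to the low-action part of $\SH$ need not be an isomorphism in the non-exact case, so one cannot simply quote exactness; one must argue directly at the chain level that in a suitable cofinal family of Hamiltonians every nonconstant one-periodic orbit of negative action is, in fact, killed once $\SH(W)=0$ — equivalently, that the subcomplex $\CF^{-}(H)$ already computes the whole homology in action window $(-\infty,a)$ for $a<0$. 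The Bourgeois–Oancea maximum principle, giving the filtration of $\HF(H)$ by ``level of $r$'' with $\HF^{-}(H)$ at the bottom, is exactly what makes this work, and the compactness/cofinality juggling there is the only genuinely technical piece; everything downstream is a formal consequence of filtered exact sequences and the spectral-sequence edge map.
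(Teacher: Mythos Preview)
Your argument for \eqref{eq:eqv-noneqv} is essentially the same as the paper's: the natural map $\SH^{+}(W)\to\SH^{+,G}(W)$ sends $\zeta^{+}$ to $\zeta^{G}_0$ and respects the action filtration, so the commutative square gives the inequality immediately.

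For \eqref{eq:pos}, however, your plan has a genuine gap. You set out to prove that $\SH^{+,(-\infty,a)}(W)=0$ for every $a<0$, but this is almost certainly false in the non-exact setting: Proposition~\ref{prop:negative} shows that nonconstant Reeb orbits can have Hamiltonian action arbitrarily close to $-\pi k/2$, so there is no reason for the filtered positive homology below level $0$ to vanish. Your own justification is circular --- in the parenthetical you invoke that ``$\SH^{+,(-\infty,a)}$ [is] generated by nonconstant orbits of positive \ldots action,'' which is exactly the exactness assumption the authors warn against just before this lemma. The ``main obstacle'' you flag is not merely technical: the statement you are trying to prove is the wrong one.

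What you actually need is much weaker: for $a<0$ the \emph{image} of $i_a\colon\SH^{+,(-\infty,a)}(W)\to\SH^{+}(W)$ is zero. This does follow from your ingredients, cleanly: since constant orbits have action $-H(p)\ge 0$, one has $\SH^{-,(-\infty,a)}(W)=0$ for $a<0$, so the filtered exact sequence gives $\SH^{(-\infty,a)}(W)\cong\SH^{+,(-\infty,a)}(W)$; then the commutative square with $\SH^{(-\infty,a)}(W)\to\SH(W)=0$ forces $i_a=0$. Had you aimed for $\im(i_a)=0$ instead of $\SH^{+,(-\infty,a)}(W)=0$, the algebraic route would work. The paper takes a different and more direct route: for an admissible $H$ with $H\ge-\delta$ on $W$, the orbit carrying the selector for $\zeta^{+}$ is connected by a Floer trajectory to a constant orbit in $W$ (this is where the origin of $\zeta^{+}$ in $\SH^{-}$ is used), and the action of that constant orbit bounds the selector from below; passing to the limit $\delta\to 0$ gives non-negativity.
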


These inequalities are well known when $W$ is exact. (Moreover, then
all capacities are strictly positive.) However, when $W$ is only
assumed to be symplectically aspherical, non-trivial closed Reeb
orbits on $\p W$ can possibly have negative Hamiltonian action (i.e.,
symplectic area), and \eqref{eq:pos} is not entirely obvious.

\begin{proof}
  To prove \eqref{eq:pos} for, say, $\s_\zeta(W)$, consider an
  admissible Hamiltonian $H$, which is non-degenerate and bounded from
  below by $-\delta<0$ on $W$.  It is clear that the action selector
  corresponding to $\zeta^+$ for $H$ is also bounded from below by
  $-\delta$. Indeed, after a small non-degenerate perturbation of $H$
  outside $W$, the value of the selector is attained on an orbit which
  is connected by a Floer trajectory to a critical point of $H$ in
  $W$. Passing to the limit, we see that $\s_\zeta(W)\geq 0$. For the
  capacities $\s_{\zeta,k}^G$ the argument is similar.

  The proof of \eqref{eq:eqv-noneqv} is identical to the argument in
  the case where $W$ is exact. Namely, reasoning as in the proof of
  Proposition \ref{prop:hom-calc}, it is easy to show that the natural
  map
$$
\H_*(W,\p W)\to \SH^-(W)\stackrel{\cong}{\to} \SH^+(W)\to \SH^{+,G}(W),
$$
where we suppressed in the notation the grading shift by the second
arrow isomorphism, sends $\zeta$ to
$\zeta_0^G=\zeta^+\otimes \sigma_0$. With this in mind,
\eqref{eq:eqv-noneqv} follows from the commutative diagram 
\begin{center}
\begin{tikzpicture}
\node (a1) {$\SH^{+,(-\infty,a)}(W)$};
\node [right of=a1, xshift=25mm] (a2) {$\SH^{+}(W)$};
\node [below of=a1, yshift=-8mm] (b1) {$\SH^{+,(-\infty,a),S^1}(W)$};
\node [right of=b1, xshift=25mm] (b2) {$\SH^{+,S^1}(W)$};

\path[draw, -latex'](a1) -- node[yshift=3mm]{} (a2);
\path[draw, -latex'](b1) -- node[yshift=3mm]{} (b2);
\path[draw, -latex'](a1) -- node[xshift=-5mm, yshift=1mm]{} (b1);
\path[draw, -latex'](a2) -- node[yshift=1mm, xshift=2mm]{} (b2);
\end{tikzpicture}
\end{center}
\end{proof}


\begin{Remark}
  We expect that the strict inequalities also hold in
  \eqref{eq:pos}. However, proving this would require a more subtle
  argument. One could use, for instance, a continuation or ``transfer''
  map between $W$ and a slightly shrunk domain $W'$ to show that
  this map decreases the action by a certain amount and reasoning as
  in the proof of Theorem \ref{thm:van}. In Section
  \ref{sec:prequant}, we will show that the strict inequalities hold
  for prequantization bundles by a rather simple and different
  argument.
 \end{Remark}

 \begin{Remark} Proposition \ref{prop:hom-calc} readily extends to the
   setting where $W$ is monotone (or negative monotone, provided that
   the Floer homology is defined) once a Novikov ring is incorporated
   into the isomorphisms.
 \end{Remark}

\subsection{Uniform instability of the symplectic homology}
Let us now turn to quantitative consequences of vanishing of the
symplectic homology.

We say that the filtered symplectic homology of $W$ is \emph{uniformly
  unstable} if the natural ``quotient-inclusion'' map
\begin{equation}
\label{eq:van}
\SH^I(W)\to \SH^{I+c}(W)
\end{equation}
is zero for every interval $I$ (possibly infinite) and some constant
$c\geq 0$ independent of $I$. One way to interpret this defintion,
inspired by the results in \cite{Su}, is that every element of the
filtered homology is ``noise'' on the $c$-scale or, equivalently, that
all bars in the barcode associated with this homology have length no
longer than $c$. (See, e.g., \cite{PS,UZ} for a discussion of barcodes
and persistence modules in the context of symplectic topology.)

The requirement that the homology is uniformly unstable is seemingly
stronger than that the total homology vanishes: setting $I=\R$ we
conclude that $\SH(W)=0$.  However, as was pointed out to us by Kei
Irie, \cite{Ir}, the two conditions are equivalent for Liouville
domains. In other words, somewhat surprisingly, vanishing of the
total homology is equivalent to the uniform instability of the filtered
homology.  The next proposition is a minor generalization of this
observation.

\begin{Proposition}
\label{prop:irie}
Assume that
$\omega|_{\pi_2(W)}=0$. The following two conditions are equivalent:
\begin{itemize}
\item[\rm{(i)}] $\SH(W)=0$ and
\item[\rm{(ii)}] there exists a constant $c_0>0$ such that for any
  $c>c_0$ and any interval $I\subset\R$ the map \eqref{eq:van} is zero.
\end{itemize}
Moreover, the smallest constant $c_0$ with this property is exactly
the capacity $\s(W)$. 
\end{Proposition}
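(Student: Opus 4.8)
The plan is to prove (ii)$\Rightarrow$(i) by the trivial specialization $I=\R$, which makes \eqref{eq:van} the identity map $\SH(W)\to\SH(W)$; if this is zero then $\SH(W)=0$. The substance is in the reverse implication and in the identification of the optimal constant with $\s(W)$. So assume $\SH(W)=0$. First I would observe that it suffices to prove the statement for intervals $I=(a,b)$ with finite endpoints outside $\CS_\omega(W)\cup\{0\}$, since the general case follows by passing to limits over exhausting/shrinking families of such intervals (using that the filtered homology is a persistence module, finitely generated on compact windows because $W$ is aspherical and the spectrum is discrete). Given such $I$, consider the map \eqref{eq:van} and factor it through the long exact sequences relating $\SH^{-}$, $\SH$, $\SH^{+}$ in each action window. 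Since $\SH(W)=0$, the boundary map gives isomorphisms $\SH^{+,J}$-type pieces shifted against $\SH^{-,J}$-type pieces — more precisely, from the filtered version of \eqref{eq:exact-seq} and $\SH(W)=0$ one gets that the ``total'' classes all die, so any class in $\SH^{I}(W)$ that survives to $\SH^{I+c}(W)$ must factor through the image of $\H_*(W,\p W)$-generated classes, i.e. through the capacity $\s(W)$ and the equivariant/non-equivariant comparison of Lemma \ref{lemma:pos}.

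The cleaner route, which I would actually follow, is to argue directly with the continuation/transfer maps. Pick an admissible $H$ computing the filtered homology on the window $I$, and a larger admissible $K\geq H$ computing it on $I+c$; the map \eqref{eq:van} is induced by the monotone continuation $\HF^{I}(H)\to\HF^{I+c}(K)$. Because $\SH(W)=0$, for $c$ large enough the fundamental-class generator $[W,\p W]^{+}$ becomes ``invisible'': its action selector value is exactly $\s(W)$, so the image $i_a(\SH^{+,(-\infty,a)})$ stabilizes to all of $\SH^+(W)=\H_*(W,\p W)[-n+1]$ precisely once $a>\s(W)$, and dually every filtered class is killed once we enlarge the window past length $\s(W)$. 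Quantitatively: any persistence bar in the barcode of $\SH^{\bullet}(W)$ has right endpoint bounded by $\s(W)$ (because after that action value the class maps to zero in $\SH(W)=0$) and left endpoint bounded below by $0$ (by Lemma \ref{lemma:pos}, \eqref{eq:pos}), hence length at most $\s(W)$. Therefore \eqref{eq:van} is zero whenever $c>\s(W)$, giving (ii) with $c_0=\s(W)$.

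For optimality I would show that no smaller constant works: by definition of $\s(W)$ as an infimum, for every $\eps>0$ there is a class — the fundamental-class generator — realized at action level in $(\s(W)-\eps,\s(W)]$ but not below $\s(W)-\eps$, so taking $I=(-\infty,\s(W)-\eps)$ the map $\SH^{I}(W)\to\SH^{I+c}(W)$ is nonzero for $c$ slightly larger than $\eps$... more carefully, one exhibits an interval $I$ and a constant $c$ arbitrarily close to $\s(W)$ for which \eqref{eq:van} is nonzero, using that the bar corresponding to $[W,\p W]^{+}$ has length exactly $\s(W)$ (its left end is $0$ by the exactness-type argument in Lemma \ref{lemma:pos} combined with $\SH^{-,I}(W)=0$ for $I\subset(0,\infty)$, and its right end is $\s(W)$ by definition). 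Hence $c_0=\s(W)$ is sharp.

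The main obstacle I anticipate is the bookkeeping needed to pass between the filtered long exact sequence \eqref{eq:exact-seq}, the action selectors, and the barcode language rigorously in the non-exact (merely aspherical) setting — in particular controlling the \emph{left} endpoints of bars, i.e. genuinely using \eqref{eq:pos} from Lemma \ref{lemma:pos}, since without exactness Reeb orbits can have negative symplectic action and one must rule out bars extending below $0$. Everything else is a routine limiting argument over admissible Hamiltonians and exhausting intervals.
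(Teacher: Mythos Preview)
Your proposal has a genuine gap: the barcode argument does not go through, and the key mechanism is missing entirely.

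You claim that ``any persistence bar in the barcode of $\SH^{\bullet}(W)$ has right endpoint bounded by $\s(W)$'' and ``left endpoint bounded below by $0$''. Both assertions are unjustified and in fact wrong. The vanishing $\SH(W)=0$ only tells you there are no \emph{infinite} bars; finite bars can have right endpoints at arbitrarily large action values (there are closed Reeb orbits of arbitrarily large action). Lemma \ref{lemma:pos} says the \emph{capacities} $\s_\zeta$, $\s_{\zeta,k}^G$ are nonnegative --- these are action values of very specific classes coming from $\H_*(W,\p W)$ --- not that every bar starts at or above $0$; indeed Proposition \ref{prop:negative} shows that closed Reeb orbits can have negative symplectic action in this setting. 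So you cannot bound bar lengths by $\s(W)$ this way. The sentence ``dually every filtered class is killed once we enlarge the window past length $\s(W)$'' is exactly the statement to be proved, and nothing preceding it establishes it.

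What you are missing is the ring structure. The paper's argument hinges on the pair-of-pants product: the fundamental class $\zeta=[W,\p W]$ maps to the \emph{unit} in $\SH^{(-\infty,0]}(W)$, and the product respects the action filtration, so multiplication by $\zeta$ gives a commutative square relating $\SH^{(-\infty,a)}(W)\otimes\SH^{(-\infty,0]}(W)\to\SH^{(-\infty,a)}(W)$ (identity in the second factor) to $\SH^{(-\infty,a)}(W)\otimes\SH^{(-\infty,c)}(W)\to\SH^{(-\infty,a+c)}(W)$. For $c>\s(W)$ the image of $\zeta$ in $\SH^{(-\infty,c)}(W)$ is zero by definition of $\s(W)$, hence the shift map $\SH^{(-\infty,a)}(W)\to\SH^{(-\infty,a+c)}(W)$ vanishes. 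This is precisely how the death of the single class $\zeta$ forces the death of \emph{all} filtered classes after shifting by $c$: it is an algebraic fact about unital rings, not a barcode length estimate. The cases $I=(a,b)$ and $I=(a,\infty)$ then follow by straightforward diagram chasing in the long exact sequence of the pair and by passing to the limit. Your proposal never invokes the product, and without it there is no link between the capacity of the unit and uniform instability for arbitrary classes.
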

\begin{proof} 
  As has been pointed out above, to prove the implication \rm{(ii)}
  $\Rightarrow$ \rm{(i)}, it is enough to set $I= \R$ in \rm{(ii)}.
  Indeed, then \eqref{eq:van} is simultaneously zero and the identity
  map, which is only possible when $\SH(W)=0$.

  Let us prove the converse.  Assume that $\SH(W)=0$ and consider the
  natural map $\psi\colon \SH^-(W) \rightarrow \SH^{(-\infty, c)}(W)$.
  By definition, $\s(W)=\inf\{ c\mid\psi(\zeta)=0\}<\infty $ where we
  took $\zeta$ to be the image of the fundamental class
  $[W,\p W]$ in $\SH^-(W)$; see Proposition \ref{prop:hom-calc}.  Our
  goal is to show that the map \eqref{eq:van} vanishes for any
  $c>\s(W)$ and any interval $I$.

  {\bf Step 1.} For $a\notin \CS_\omega(W),$ consider an interval
  $I=(-\infty, a).$ We have the following commutative diagram where
  the horizontal maps are given by the pair-of-pants product:
\begin{center}
\begin{tikzpicture}

\node (a1) {$\SH^{(-\infty,a)}(W)\otimes\SH^{(-\infty,0]}(W)$};
\node [right of=a1, xshift=45mm] (a2) {$\SH^{(-\infty,a)}(W)$};
\node [below of=a1, yshift=-10mm] (b1) {$\SH^{(-\infty,a)}(W)\otimes\SH^{(-\infty,c)}(W)$};
\node [right of=b1, xshift=45mm] (b2) {$\SH^{(-\infty,a+c)}(W)$};

\path[draw, -latex'](a1) -- node[yshift=3mm]{} (a2);
\path[draw, -latex'](b1) -- node[yshift=3mm]{} (b2);
\path[draw, -latex'](a1) -- node[xshift=-5mm, yshift=1mm]{id$\otimes \psi$} (b1);
\path[draw, -latex'](a2) -- node[yshift=1mm, xshift=2mm]{$\phi$} (b2);

\end{tikzpicture}
\end{center}
Recall that $\zeta \in \SH^{-}(W)$ is a unit with respect to this
product.(We refer the reader to \cite{AS} for the definition of the
pair-of-paints product applicable in this case and also to, e.g.,
\cite{Ri:JT}.) Thus, for any $\sigma \in \SH^{(-\infty,a)}(W)$,
\begin{center}
\begin{tikzcd}
  \sigma\otimes \zeta \arrow[r, mapsto] \arrow[d, mapsto]
    & \sigma \arrow[d,  mapsto] \\
  \sigma\otimes 0 \arrow[r, mapsto]
&0 .\end{tikzcd}
\end{center}
Hence the map $\phi$ vanishes. 

{\bf Step 2.} For $a,b \notin \CS_\omega(W),$ consider an interval
$I=(a,b).$ We have the following commutative diagram:
\begin{center}
\begin{tikzpicture}

\node (a0) {};
\node [right of=a0, xshift=9mm] (a1) {$\SH_k^{(-\infty,a)}(W)$};
\node [right of=a1, xshift=25mm] (a2) {$\SH_k^{(-\infty,b)}(W)$};
\node [right of=a2, xshift=25mm] (a3) {$\SH_k^{(a,b)}(W)$};
\node [right of=a3, xshift=20mm] (a4) {};

\node [below of=a0, yshift= -7mm] (b0) {};
\node [right of=b0, xshift=9mm] (b1) {$\SH_k^{(-\infty,a+c)}(W)$};
\node [right of=b1, xshift=25mm] (b2) {$\SH_k^{(-\infty,b+c)}(W)$};
\node [right of=b2, xshift=25mm] (b3) {$\SH_k^{(a+c,b+c)}(W)$};
\node [right of=b3, xshift=20mm] (b4) {}; 

\path[draw, -latex'](a0) -- node[yshift=3mm]{} (a1);
\path[draw, -latex'](a1) -- node[yshift=3mm]{} (a2);
\path[draw, -latex'](a2) -- node[yshift=3mm]{} (a3);
\path[draw, -latex'](a3) -- node[yshift=3mm]{} (a4);

\path[draw, -latex'](b0) -- node[yshift=3mm]{} (b1);
\path[draw, -latex'](b1) -- node[yshift=3mm]{} (b2);
\path[draw, -latex'](b2) -- node[yshift=3mm]{} (b3);
\path[draw, -latex'](b3) -- node[yshift=3mm]{} (b4);

\path[draw, -latex'](a1) -- node[yshift=0mm, xshift=3mm]{$\phi_1$} (b1);
\path[draw, -latex'](a2) -- node[yshift=0mm, xshift=3mm]{$\phi_2$} (b2);
\path[draw, -latex'](a3) -- node[yshift=0mm, xshift=2mm]{$\psi$} (b3);

\end{tikzpicture}
\end{center}
By Step 1, the maps $\phi_1,\,\, \phi_2$ are zero maps. Hence the map $\psi$ vanishes. 

{\bf Step 3.} For $a\notin \CS_\omega(W)$ consider an interval
$I=(a,\infty).$ At Step 2, we obtained the zero map
$\psi\colon \SH^{(a,b)}(W) \rightarrow \SH^{(a+c,b+c)}(W)$. By taking
$b$ to $\infty,$ we see the map
$\psi\colon \SH^{(a,\infty)}(W) \rightarrow \SH^{(a+c,\infty)}(W)$
vanishes.
\end{proof}

\begin{Remark}
\label{rmk:vanish-equiv}
It is worth pointing out that Proposition
\ref{prop:irie} and Theorem \ref{thm:van} below do not have a
counterpart in the equivariant setting. Indeed,  when $W$ is the
standard symplectic ball $B^{2n}$ the maps
$$ 
\SH^{G, (a,\,\infty)}(W)\to \SH^{G, (a+c,\,\infty)}(W)
$$
are non-zero for any $a$ and $c\geq 0$ while $\SH^G(W)=0$.
\end{Remark}

\subsection{Growth of symplectic capacities}
\label{eq:cap-growth}

Another consequence of vanishing of the symplectic homology is an upper
bound on the growth of the equivariant symplectic capacities.

\begin{Proposition}
\label{prop:capacities}
Assume that $\omega|_{\pi_2(W)}=0$. Then, for every
$\zeta\in \H_d(W,\p W)$ and $k$ such that $2k\geq 2n-d$, we have
$$
0\leq \s_{\zeta,k+1}^G(W)-\s_{\zeta,k}^G(W)\leq \s(W).
$$
\end{Proposition}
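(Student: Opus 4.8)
The lower bound $0\le \s^G_{\zeta,k+1}(W)-\s^G_{\zeta,k}(W)$ needs no work: it is the monotonicity \eqref{eq:incr}, which holds because the shift operator $D$ does not increase the action filtration and $D(\zeta^G_{k+1})=\zeta^G_k$. So the plan is to prove the upper bound $\s^G_{\zeta,k+1}(W)\le \s^G_{\zeta,k}(W)+\s(W)$. Note that these capacities are defined only when $\SH(W)=0$ (Proposition \ref{prop:hom-calc}), so that hypothesis---and hence Proposition \ref{prop:irie}---is in force. Write $d=\deg\zeta$; by Proposition \ref{prop:hom-calc} we have $\zeta^+\in\SH^+_{d-n+1}(W)$, $\zeta^G_k=\zeta^+\otimes\sigma_k\in\SH^{+,G}_{d-n+1+2k}(W)$, and $\SH^+_m(W)=\H_{n+m-1}(W,\p W)$. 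I would fix $a\notin\CS_\omega(W)$ with $a>\s^G_{\zeta,k}(W)$, noting $a>0$ by Lemma \ref{lemma:pos}, and then produce, for each $c>\s(W)$, a preimage of $\zeta^G_{k+1}$ under $\SH^{+,G,(-\infty,a+c)}(W)\to\SH^{+,G}(W)$; letting $a\searrow\s^G_{\zeta,k}(W)$ and $c\searrow\s(W)$ then gives the bound.

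The device is a diagram chase through the filtered Gysin sequence (the one displayed in Section \ref{sec:sympl_hom}, taken with $\star=+$ and with the filtration $(-\infty,\,\cdot\,)$), which is natural in the filtration parameter, combined with the filtered form of the exact triangle $\SH^-\to\SH\to\SH^+$. First I would choose a lift $\hat\zeta\in\SH^{+,G,(-\infty,a)}(W)$ of $\zeta^G_k$. The heart of the argument is to show that its image $\delta(\hat\zeta)\in\SH^{+,(-\infty,a)}(W)$ under the Gysin connecting map becomes zero in $\SH^{+,(-\infty,a+c)}(W)$ once $c>\s(W)$. Since $\delta D=0$ and $\zeta^G_k=D(\zeta^G_{k+1})$, the image of $\delta(\hat\zeta)$ in the total group $\SH^+(W)$ equals $\delta(\zeta^G_k)=0$. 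Because $a>0$, the inclusion induces an isomorphism $\SH^{-,(-\infty,a)}(W)\cong\SH^-(W)$ (all of $\SH^-$ is carried by orbits of non-positive action), so a chase of the filtered and total $\SH^-/\SH/\SH^+$ sequences shows that $\delta(\hat\zeta)$ already vanishes in $\SH^{-,(-\infty,a)}(W)[-1]$, hence is the image of some $\tau\in\SH^{(-\infty,a)}(W)$ under $\SH^{(-\infty,a)}(W)\to\SH^{+,(-\infty,a)}(W)$. By Proposition \ref{prop:irie}, for $c>\s(W)$ the map $\SH^{(-\infty,a)}(W)\to\SH^{(-\infty,a+c)}(W)$ vanishes, so $\tau$, and therefore $\delta(\hat\zeta)$ as its image in positive homology, becomes zero at filtration level $a+c$.

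Granting this, set $b=a+c$ and let $\hat\zeta^{(b)}$ be the image of $\hat\zeta$ in $\SH^{+,G,(-\infty,b)}(W)$, so that $\delta(\hat\zeta^{(b)})=0$ by the previous step. Exactness of the Gysin sequence at level $(-\infty,b)$ then furnishes $\eta\in\SH^{+,G,(-\infty,b)}(W)$ with $D\eta=\hat\zeta^{(b)}$; passing to total homology, its image $\bar\eta\in\SH^{+,G}(W)$ satisfies $D\bar\eta=\zeta^G_k=D(\zeta^G_{k+1})$. The hypothesis $2k\ge 2n-d$ enters precisely here: it is exactly the range in which Proposition \ref{prop:hom-calc}(iii) asserts that $D\colon\SH^{+,G}_{d-n+3+2k}(W)\to\SH^{+,G}_{d-n+1+2k}(W)$ is an isomorphism, in particular injective, so $\bar\eta=\zeta^G_{k+1}$. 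Since $\bar\eta$ lies in the image of $\SH^{+,G,(-\infty,b)}(W)\to\SH^{+,G}(W)$, we obtain $\s^G_{\zeta,k+1}(W)\le b=a+c$, as desired. (For $W=B^{2n}$ the increment equals $\s(W)$, so the bound is sharp.)

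The step I expect to be the genuine obstacle is the middle one---forcing $\delta(\hat\zeta)$ to vanish at level $a+\s(W)$. It requires transporting the vanishing of $\delta(\zeta^G_k)$ from the \emph{total} positive homology down to the \emph{filtered total} homology, where Proposition \ref{prop:irie} has bite, and this means simultaneously manipulating three long exact sequences (Gysin for $\SH^+$, the $\SH^-/\SH/\SH^+$ triangle, and their filtered refinements). It is also here that the non-negativity from Lemma \ref{lemma:pos}, which guarantees $a>0$ and hence $\SH^{-,(-\infty,a)}(W)=\SH^-(W)$, is used in an essential way.
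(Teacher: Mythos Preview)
Your argument is correct and reaches the same conclusion by a genuinely different, though closely related, diagram chase. Both proofs feed the problem into Proposition \ref{prop:irie} after first passing to non-equivariant homology via the Gysin sequence (necessary by Remark \ref{rmk:vanish-equiv}), and both use the isomorphism of $D$ in the degree dictated by $2k\ge 2n-d$; the difference lies in which end of the action filtration and which leg of the Gysin triangle is used. The paper works on the \emph{quotient} side: it pushes $\xi=\zeta^G_{k+1}$ forward to $\SH^{(b,\infty),G}$ via the map $j$, observes $D_1(j(\xi))=j(\zeta^G_k)=0$ since $\zeta^G_k$ already lives below level $b$, and then invokes exactness at the $\pi_*$-leg of the Gysin row to write $j(\xi)=\pi_*(\xi')$ with $\xi'\in\SH^{(b,\infty)}$ non-equivariant, where Proposition \ref{prop:irie} kills it directly. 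You work on the \emph{sub} side: you lift $\zeta^G_k$ to $\hat\zeta\in\SH^{+,G,(-\infty,a)}$, apply the $\delta$-leg of Gysin to land in $\SH^{+,(-\infty,a)}$, and then---and this is the extra ingredient---invoke the filtered $\SH^-\!/\SH/\SH^+$ triangle together with $\SH^{-,(-\infty,a)}\cong\SH^-$ (valid since $a>0$) to lift $\delta(\hat\zeta)$ to $\tau\in\SH^{(-\infty,a)}$, where Proposition \ref{prop:irie} applies. The paper's route is one step shorter because $\SH^{(b,\infty)}$ is already the right group for Proposition \ref{prop:irie}; your route costs the additional triangle but makes the role of Lemma \ref{lemma:pos} (ensuring $a>0$) transparent, and it avoids the implicit identification of $\SH^+$ with $\SH^{(\epsilon,\infty)}$ that the paper's diagram relies on.
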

\begin{proof} 
The first inequality is simply the assertion that the sequence
$\s_{\zeta,k}(W)$ is (non-strictly) monotone increasing (see
\eqref{eq:incr}) and, as has been pointed out in Section
\ref{sec:hom-cap}, this is a consequence of the fact that
the operator $D$ does not increase the
action filtration (see, e.g., \cite{BO:Gysin,GG:convex}).

Let us show that $\s_{\zeta,k+1}^G(W)-\s_{\zeta,k}^G(W)\leq \s(W)$.
By Proposition \ref{prop:hom-calc},
\begin{align*}
\SH_{n+1}^{+,G}(W) & \cong \bigoplus_{r=0}^n \H_{2n-2r}(W, \p W) , \\
\SH_{n+2}^{+,G}(W) & \cong \bigoplus_{r=1}^n \H_{2n-2r+1}(W, \p W).  
\end{align*}
For $k \geq 1$ and $e > \s(W),$ we have the following commutative diagram:

\begin{center}
\begin{tikzpicture}

\node (b0) {};
\node [right of=b0, xshift=9mm] (b1) {$\SH^{(b+e,\infty)}_{n+r}(W)$};
\node [right of=b1, xshift=25mm] (b2) {$\SH^{(b+e,\infty),G}_{n+r+2}(W)$};
\node [right of=b2, xshift=25mm] (b3) {$\SH^{(b+e,\infty),G}_{n+r}(W)$};
\node [right of=b3, xshift=20mm] (b4) {};

\node [below of=b0, yshift= -7mm] (c0) {};
\node [right of=c0, xshift=9mm] (c1) {$\SH^{(b,\infty)}_{n+r+2}(W)$};
\node [right of=c1, xshift=25mm] (c2) {$\SH^{(b,\infty),G}_{n+r+2}(W)$};
\node [right of=c2, xshift=25mm] (c3) {$\SH^{(b,\infty),G}_{n+r}(W)$};
\node [right of=c3, xshift=20mm] (c4) {}; 

 \node [below of=c0, yshift= -7mm] (d0) {};
 \node [right of=d0, xshift=9mm] (d1) {$\SH^{+}_{n+r+2}(W)$};
 \node [right of=d1, xshift=25mm] (d2) {$\SH^{+,G}_{n+r+2}(W)$};
 \node [right of=d2, xshift=25mm] (d3) {$\SH^{+,G}_{n+r}(W)$};
 \node [right of=d3, xshift=20mm] (d4) {$\SH^{+}_{n+r+1}(W)$};

\node [below of=d0, yshift= -7mm] (e0) {};
\node [right of=e0, xshift=9mm] (e1) {$\SH^{(\epsilon, b)}_{n+r+2}(W)$};
\node [right of=e1, xshift=25mm] (e2) {$\SH^{(\epsilon, b),G}_{n+r+2}(W)$};
\node [right of=e2, xshift=25mm] (e3) {$\SH^{(\epsilon, b),G}_{n+r}(W)$};
\node [right of=e3, xshift=20mm] (e4) {};

\node [below of=e0, yshift= -3mm] (f0) {};
\node [right of=f0, xshift=9mm] (f1) {};
\node [right of=f1, xshift=25mm] (f2) {};
\node [right of=f2, xshift=25mm] (f3) {};
\node [right of=f3, xshift=20mm] (f4) {};
 \path[draw, -latex'](b0) -- node[yshift=3mm]{} (b1);
 \path[draw, -latex'](b1) -- node[yshift=3mm]{} (b2);
 \path[draw, -latex'](b2) -- node[yshift=3mm]{} (b3);
 \path[draw, -latex'](b3) -- node[yshift=3mm]{} (b4);

 \path[draw, -latex'](c0) -- node[yshift=3mm]{} (c1);
 \path[draw, -latex'](c1) -- node[yshift=3mm]{$\pi_*$} (c2);
 \path[draw, -latex'](c2) -- node[yshift=3mm]{$D_1$} (c3);
 \path[draw, -latex'](c3) -- node[yshift=3mm]{} (c4);

 \path[draw, -latex'](d0) -- node[yshift=3mm]{} (d1);
 \path[draw, -latex'](d1) -- node[yshift=3mm]{} (d2);
 \path[draw, -latex'](d2) -- node[yshift=3mm]{$D_2$ } (d3);
 \path[draw, -latex'](d3) -- node[yshift=3mm]{} (d4);

 \path[draw, -latex'](e0) -- node[yshift=3mm]{} (e1);
 \path[draw, -latex'](e1) -- node[yshift=3mm]{} (e2);
 \path[draw, -latex'](e2) -- node[yshift=3mm]{} (e3);
 \path[draw, -latex'](e3) -- node[yshift=3mm]{} (e4);

  \path[draw, -latex'](c1) -- node[yshift=0mm, xshift=4mm]{\eqref{eq:van}} (b1);
 \path[draw, -latex'](c2) -- node[yshift=0mm, xshift=3mm]{$f$} (b2);
 \path[draw, -latex'](c3) -- node[yshift=0mm, xshift=5mm]{} (b3);
  
  \path[draw, -latex'](d1) -- node[yshift=0mm, xshift=3mm]{} (c1);
  \path[draw, -latex'](d2) -- node[yshift=0mm, xshift=4.5mm]{$j_{r+2}$} (c2);
  \path[draw, -latex'](d3) -- node[yshift=0mm, xshift=3.5mm]{$j_{r}$} (c3);
  
  \path[draw, -latex'](e1) -- node[yshift=0mm, xshift=3mm]{} (d1);
  \path[draw, -latex'](e2) -- node[yshift=0mm, xshift=4.5mm]{$i_{r+2}$} (d2);
  \path[draw, -latex'](e3) -- node[yshift=0mm, xshift=3.5mm]{$i_{r}$} (d3);

 \path[draw, -latex'](f1) -- node[yshift=0mm, xshift=3mm]{} (e1);
 \path[draw, -latex'](f2) -- node[yshift=0mm, xshift=3mm]{} (e2);
 \path[draw, -latex'](f3) -- node[yshift=0mm, xshift=5mm]{} (e3);    

\end{tikzpicture}
\end{center}
Except for the first row, each row is the Gysin sequence and $D_i$ is
the shift operator for $i=1,\, 2$; each column (except again the
entries in the first row) comes from the short exact sequence
\begin{center}
\begin{tikzpicture}
\node (a0) {$0$};
\node [right of=a0, xshift=12mm] (a1) {$\CF^{(\epsilon, b)}$};
\node [right of=a1, xshift=15mm] (a2) {$\CF^{(\epsilon, \infty)}$};
\node [right of=a2, xshift=15mm] (a3) {$\CF^{(b,\infty)}$};
\node [right of=a3, xshift=12mm] (a4) {$0,$};

 \path[draw, -latex'](a0) -- node[yshift=3mm]{} (a1);
 \path[draw, -latex'](a1) -- node[yshift=3mm]{} (a2);
 \path[draw, -latex'](a2) -- node[yshift=3mm]{} (a3);
 \path[draw, -latex'](a3) -- node[yshift=3mm]{} (a4);
 
\end{tikzpicture}
\end{center}
where $\CF^I$ is a filtered Floer complex; the first row is obtained
by shifting the action interval $(b,\infty)$ by $e$ upward.

Consider a class $\zeta\in \H_d(W,\p W)$. Since $2k\geq 2n-d$, the
class $\zeta^G_k$ lies in $\SH_{n+r}^{+,G}(W)$ for some $r\geq
1$. From \eqref{zero}, we see that $\SH^{+}_{n+r+2}(W)=0$ and
$\SH^{+}_{n+r+1}(W)=0$ for all $r\geq 1$. Hence the map $D_2$ is an
isomorphism. Let $\xi$ be the preimage of $\zeta_k^G$ under
$D_2$. Assume that there exists a class
$\zeta' \in \SH_{n+r}^{(\epsilon,b),G}(W)$ which is sent to
$\zeta_k^G$ by the map $i_{r}$. Then we see that
$(i_r\circ j_r )(\zeta') = 0$. By commutativity of the diagram,
$(D_1\circ j_{r+2})(\xi) = 0.$ Hence, there exists a class
$\xi' \in \SH_{n+r+2}^{(b,\infty)}(W)$ such that
$\pi_*(\xi') = j_{r+2}(\xi).$ Again, by commutativity of the diagram,
$(f\circ j_{r+2})(\xi) = 0.$ Hence, $\s_{\zeta,k+1}^G(W) \leq b+e. $
\end{proof}

\subsection{Vanishing and displacement}
A geometrical counterpart of the condition that $\SH(W)=0$ is the
requirement that $W$ is (stably) displaceable in $\wh{W}$. In this
section, we will revisit and generalize 
the well-known fact that $\SH(W)=0$ for displaceable Liouville domains
$W$. In particular, we extend this result to monotone or
negative monotone symplectic manifolds.

\begin{Theorem}
\label{thm:van}
Assume that $W$ is positive or negative monotone and that $W$ is
displaceable in $\wh{W}$ with displacement energy $e(W)$. Then, for
any $c>e(W)$ and any interval $I\subset \R$, the quotient-inclusion
map \eqref{eq:van} is zero. Thus the filtered symplectic homology is
uniformly unstable and, in particular, $\SH(W)=0$.
\end{Theorem}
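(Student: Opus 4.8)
The plan is to mimic the classical Floer-theoretic argument that displaceability forces the total symplectic homology to vanish, but carried out at the level of the filtered groups so as to directly produce the uniform instability statement, and arranged so that monotonicity (rather than exactness) is all that is needed. The key mechanism is the energy-capacity estimate: if $\phi=\phi^1_F$ is a Hamiltonian isotopy displacing $W$ with $\|F\|<c$, then for admissible $H$ the continuation/product comparisons between $\HF^I(H)$ and its push-forward shift the action filtration by no more than $\|F\|$, and since $W\cap\phi(W)=\emptyset$ the relevant trajectories are ``killed.'' Concretely, I would first reduce, exactly as in Step 1--3 of the proof of Proposition \ref{prop:irie}, to the case $I=(-\infty,a)$ with $a\notin\CS_\omega(W)$: once the quotient-inclusion map $\SH^{(-\infty,a)}(W)\to\SH^{(-\infty,a+c)}(W)$ is shown to vanish for all such $a$, the same three-step diagram chase (using the long exact sequences of the pairs and passing $b\to\infty$) upgrades this to arbitrary intervals $I$, and setting $I=\R$ gives $\SH(W)=0$.

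For the base case $I=(-\infty,a)$, I would run the standard ``displacement trivializes the unit'' argument. Pick a cofinal admissible $H$ that is $C^2$-small and negative on $W$ and linear of slope $\kappa\notin\CS_\alpha(\alpha)$ outside; the class $\zeta\in\SH^-(W)\hookrightarrow\SH^{(-\infty,\eps]}(W)$ represented by $[W,\p W]$ is a unit for the pair-of-pants product, as recalled in the proof of Proposition \ref{prop:irie}. The displacement of $W$ by a Hamiltonian $\psi$ with energy just below $c$ gives, via the standard quasi-isomorphism comparing $\HF(H)$ with $\HF(H\# (\text{small})\,\psi)$ and the fact that $\psi$ moves $W$ off itself, that the image of $\zeta$ under $\SH^-(W)\to\SH^{(-\infty,c)}(W)$ is zero; in the monotone setting this uses the energy estimate on Floer cylinders together with the Novikov completion exactly as in the exact case, the key point being that the monotonicity condition $\omega|_{\pi_2(W)}=\lambda c_1(TW)|_{\pi_2(W)}$ keeps the Floer homology defined and the action-energy inequality controlled. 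Then, feeding $\psi(\zeta)=0$ into the module structure
\begin{center}
\begin{tikzcd}
\sigma\otimes\zeta \arrow[r,mapsto]\arrow[d,mapsto] & \sigma \arrow[d,mapsto]\\
\sigma\otimes 0 \arrow[r,mapsto] & 0
\end{tikzcd}
\end{center}
for $\sigma\in\SH^{(-\infty,a)}(W)$ — where the horizontal arrows are the pair-of-pants product $\SH^{(-\infty,a)}(W)\otimes\SH^{(-\infty,c)}(W)\to\SH^{(-\infty,a+c)}(W)$ and the right vertical arrow is \eqref{eq:van} — shows the quotient-inclusion map is zero, giving $c_0\le e(W)$.

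The main obstacle, and where I would spend the most care, is making the energy-capacity and pair-of-pants comparisons rigorous in the monotone (and negative monotone) rather than exact case: one must ensure that the Floer homology, the product, and the Viterbo-type continuation maps are all well defined over the appropriate Novikov ring, that compactness of moduli spaces of cylinders and pairs of pants holds under the monotonicity assumption (no bubbling in the relevant index range, sphere bubbles accounted for by the Novikov variable), and that the action estimate ``displacement energy bounds the shift'' survives the presence of the Novikov completion — i.e., that the quantitative $\|F\|$-bound is not washed out. I expect this to reduce to citing the known constructions of the pair-of-pants product and module structure in this generality (as in \cite{AS,Ri:JT}) together with the standard energy argument of Schwarz/Frauenfelder--Schlenk adapted to the filtered setting, and the rest is the purely diagrammatic bootstrapping already carried out in Proposition \ref{prop:irie}, whose Steps 1--3 transcribe essentially verbatim once the base case is in hand.
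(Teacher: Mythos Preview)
Your route is genuinely different from the paper's. You reduce to the half-infinite case $I=(-\infty,a)$ and then invoke the pair-of-pants module structure together with an energy--capacity inequality for the unit $\zeta$, essentially proving $\s(W)\le e(W)$ first and then bootstrapping via the diagram chase of Proposition~\ref{prop:irie}. The paper instead gives a direct ``sandwich'' argument that avoids the product structure entirely: it builds a cofinal family of admissible Hamiltonians $F_i$ with a high plateau over an annulus $r\in[r_i^-,r_i^+]$, composes with the displacing Hamiltonian $K$ to get $K\# F_i$, and checks by hand that every contractible one-periodic orbit of $K\# F_i$ has action at least $b+\|K\|_+$; the continuation maps $F\to K\# F\to F$ then factor the quotient-inclusion map $\HF^I(F)\to\HF^{I+e}(F)$ through zero for any bounded $I$. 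This buys the paper a cleaner treatment of the monotone case: rather than setting up a filtered pair-of-pants product over a Novikov ring, the paper simply restricts attention to capped orbits with mean index in a fixed window $[q,q+2n]$, observes that monotonicity makes the corresponding action set $\CS_q$ compact, and chooses the plateau height accordingly. Your approach is more conceptual and modular, but the step you flag as the main obstacle --- making the filtered product and the $\zeta\mapsto 0$ argument rigorous in the (negative) monotone setting --- is real, and the paper's mean-index trick is precisely the device that sidesteps it.
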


This theorem generalizes the result that $\SH(W)=0$ for Liouville
domains displaceable in $\wh{W}$ proved in \cite{CF,CFO} via vanishing
of the Rabinowitz Floer homology. (See also \cite{Vi:GAFA} for the first
results in this direction.)  The proof of Theorem \ref{thm:van} when
$\omega|_{\pi_2(W)}=0$ is implicitly contained in \cite{Su}.  Thus the
main new point here is that this condition can be relaxed as that $W$ is
allowed to be positive or negative monotone. Note also that when $W$
is symplectically aspherical one can obtain the uniform instability as
a consequence of Proposition \ref{prop:irie} and of vanishing of the
homology although with a possibly different lower bound on $c$, which
turns out to be better. Namely, combining this proposition with
Theorem \ref{thm:van}, and also using Proposition
\ref{prop:capacities}, we have the following:

\begin{Corollary}
  Assume that $W$ is symplectically aspherical and displaceable in
  $\wh{W}$ with displacement energy $e(W)$. Then
$$
\s(W)\leq e(W)
$$
and thus, for $\zeta\in \H_d(W,\p W)$ and $2k\geq 2n-d$,
$$
0\leq \s_{\zeta,k+1}^G(W)-\s_{\zeta,k}^G(W)\leq e(W).
$$
\end{Corollary}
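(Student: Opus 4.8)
The Corollary follows by combining the three preceding results, so the plan is essentially to assemble them in the right order. First I would invoke Theorem \ref{thm:van}: since $W$ is symplectically aspherical it is in particular (positive, trivially) monotone with $\omega|_{\pi_2(W)}=0$, and it is displaceable in $\wh W$ with displacement energy $e(W)$. The theorem then gives that for every $c>e(W)$ and every interval $I\subset\R$ the quotient-inclusion map \eqref{eq:van} vanishes; in particular $\SH(W)=0$, so that Proposition \ref{prop:hom-calc} applies and the capacities $\s_\zeta(W)$, $\s^G_{\zeta,k}(W)$ and $\s(W)$ are all defined.

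Next I would feed the conclusion of Theorem \ref{thm:van} into the ``moreover'' clause of Proposition \ref{prop:irie}. That proposition, which also only requires $\omega|_{\pi_2(W)}=0$, asserts that the smallest constant $c_0$ for which \eqref{eq:van} is zero for all $c>c_0$ and all $I$ is exactly $\s(W)$. Since Theorem \ref{thm:van} exhibits $e(W)$ as \emph{one} such constant (i.e.\ \eqref{eq:van} is zero for all $c>e(W)$), minimality of $\s(W)$ forces
$$
\s(W)\leq e(W),
$$
which is the first displayed inequality of the Corollary.

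Finally I would quote Proposition \ref{prop:capacities}: for $\zeta\in\H_d(W,\p W)$ and $k$ with $2k\geq 2n-d$ we have $0\leq \s^G_{\zeta,k+1}(W)-\s^G_{\zeta,k}(W)\leq \s(W)$. Chaining this with $\s(W)\leq e(W)$ gives
$$
0\leq \s^G_{\zeta,k+1}(W)-\s^G_{\zeta,k}(W)\leq e(W),
$$
completing the proof.

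There is essentially no obstacle here: the only thing to be careful about is checking that the hypotheses of each cited statement are met under the single standing assumption of the Corollary (symplectically aspherical and displaceable). Asphericity of $W$ gives $\omega|_{\pi_2(W)}=0$, which is precisely what Propositions \ref{prop:irie} and \ref{prop:capacities} need, and it also places $W$ in the (positive) monotone class required by Theorem \ref{thm:van}; displaceability with energy $e(W)$ is assumed outright. So the ``hard part'', such as it is, is purely bookkeeping — lining up the inequalities — and all the genuine work has already been done in the three results being invoked.
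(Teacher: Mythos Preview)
Your proposal is correct and follows exactly the approach indicated in the paper: combine Theorem~\ref{thm:van} (to get vanishing of \eqref{eq:van} for all $c>e(W)$), the ``moreover'' clause of Proposition~\ref{prop:irie} (to identify the optimal constant as $\s(W)$, hence $\s(W)\leq e(W)$), and then Proposition~\ref{prop:capacities} (to bound the gaps $\s^G_{\zeta,k+1}-\s^G_{\zeta,k}$ by $\s(W)$). The only quibble is terminological: rather than saying aspherical is ``trivially positive monotone'', it is cleaner to note that the proof of Theorem~\ref{thm:van} explicitly treats the aspherical case first.
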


Furthermore, recall that $W$ is called \emph{stably displaceable} when
$W\times S^1$ is displaceable in $\wh{W}\times T^*S^1$ and the
displacement energy of $W\times S^1$ is then referred to as the
stable displacement energy $e_\st(W)$ of $W$.  Combining the K\"unneth
formula from \cite{Oa:Kunneth} with Theorem \ref{thm:van}, we obtain
the following well-known result.

\begin{Corollary}
\label{cor:van}
Assume that $W$ is a Liouville domain and stably displaceable in
$\wh{W}$ with stable displacement energy $e_\st(W)$.  Then for any
$c>e_\st(W)$ and any interval $I\subset \R$ the map \eqref{eq:van} is
zero. In particular, $\SH(W)=0$.
\end{Corollary}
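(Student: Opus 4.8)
The plan is to deduce Corollary~\ref{cor:van} from Theorem~\ref{thm:van} applied to the stabilization $W\times S^1$, together with the K\"unneth formula of \cite{Oa:Kunneth} relating $\SH^I(W\times S^1)$ to $\SH^I(W)$. First I would observe that $W\times S^1$, sitting inside $\wh W\times T^*S^1$, is again a Liouville domain (the product of the Liouville structure on $W$ with the standard one on $D^*S^1$), and by hypothesis it is displaceable in $\wh W\times T^*S^1$ with displacement energy $e_\st(W)$. Theorem~\ref{thm:van} then applies verbatim to $W\times S^1$: for any $c>e_\st(W)$ and any interval $I$, the quotient-inclusion map
\begin{equation*}
\SH^I(W\times S^1)\to \SH^{I+c}(W\times S^1)
\end{equation*}
vanishes.

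Next I would invoke the K\"unneth formula from \cite{Oa:Kunneth}. It gives, compatibly with the action filtration and with the quotient-inclusion maps, an isomorphism expressing $\SH^I(W\times S^1)$ in terms of $\SH^I(W)$ tensored with (and Tor'ed against) the symplectic homology of $D^*S^1$ — concretely a natural split short exact (or direct sum) decomposition in which $\SH^I(W)$ appears as a retract. The point is only that the map $\SH^I(W)\to\SH^{I+c}(W)$ is a direct summand, up to this natural identification, of the map $\SH^I(W\times S^1)\to\SH^{I+c}(W\times S^1)$; since the latter is zero for $c>e_\st(W)$, so is the former. In particular, setting $I=\R$ forces $\SH(W)=0$, exactly as in the proof of Theorem~\ref{thm:van}.

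The main obstacle is checking that the K\"unneth isomorphism of \cite{Oa:Kunneth} is genuinely \emph{filtered} and \emph{natural} with respect to the quotient-inclusion maps \eqref{eq:van}, i.e.\ that one has a commutative square intertwining the K\"unneth decomposition with the maps induced by shifting the action window upward by $c$. One must also be mildly careful that the action filtration on $\SH^\bullet(W\times S^1)$ used in \cite{Oa:Kunneth} agrees, under the product Liouville structure, with the one on $\SH^\bullet(W)$ on the retract factor — this is where the Hamiltonian action on the $T^*S^1$ factor contributes nothing because the relevant cofinal Hamiltonians can be chosen to be $C^2$-small on the $D^*S^1$ direction, so the generators on that factor carry action in an arbitrarily small window around $0$. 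Granting these standard compatibilities, the corollary follows formally, and I would present it in two or three short paragraphs: (1) reduce to $W\times S^1$ via stable displaceability and Theorem~\ref{thm:van}; (2) transfer the vanishing back to $W$ via the filtered K\"unneth formula; (3) note the consequence $\SH(W)=0$ by taking $I=\R$.
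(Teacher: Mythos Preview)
Your proposal is correct and follows exactly the route the paper indicates: the paper does not give a standalone proof of Corollary~\ref{cor:van} but simply states that it follows by ``combining the K\"unneth formula from \cite{Oa:Kunneth} with Theorem~\ref{thm:van}.'' Your write-up fleshes this out faithfully, including the one genuine point that needs care---namely that the K\"unneth isomorphism respects the action filtration and commutes with the quotient-inclusion maps---which the paper leaves implicit.
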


\begin{proof}[Proof of Theorem \ref{thm:van}]
  First, assume that $W$ is aspherical and $I=[a,b]$ for
  $a,b\notin\CS(\alpha).$ Consider a cofinal sequence of admissible
  Hamiltonians $\{H_i \colon S^1 \times \wh{W} \to \R\}$ satisfying
  the following conditions:
\begin{itemize}
\item[](i) $H_i$ is $C^2$-small on $W$;
\item[](ii) 
  $H_i = h_i(r)$ on the cylindrical part $\p W \times [1,\infty)$,
  where $h_i$ is an increasing function such that, for some $r_i > 1$,
  $h''_i \geq 0$ on $[1, r_i]$ and $h_i(r) = k_i r + l_i$ with
  $ k_i \notin \CS(\alpha)$ for $ r \in [r_i,\infty)$;
\item[](iii) $k_i \to\infty$ and $r_i \to 1$ as $i\to\infty$.
\end{itemize}
Define a sequence of Hamiltonians $\{F_i \colon S^1 \times \wh{W} \to \R\}$ by 
\begin{itemize}
\item[](i) $F_i$ is on $C^2$-small on $W$;
\item[](ii) $F_i = f_i(r)$ on the cylindrical part
  $\p W \times [1,\infty)$, where for $\epsilon>0$
\[
  f_i(r)=
\begin{cases}
h_i(r) & \text{ if }   r \in [1,r_i] \\ 
k_i^-(r-r_i) & \text{ if }  r \in [r_i, r_i^- - \epsilon]\\ 
c_i & \text{ if }  r \in [r_i^-, r_i^+] \\
k_i^+(r-r_i) & \text{ if }  r \in [r_i^+ + \epsilon, \infty)
\end{cases}
\]
for  $k_i^+ = k_i$, $ k_i^-\notin \CS(\alpha)$ and $k_i^- > k_i^+$;
\item[](iii)
  $f''_i(r) \leq 0 \text{ if } r \in [r_i^- - \epsilon, r_i^-]$
  and
  $f''_i(r) \geq 0 \text{ if } r \in [r_i^+, r_i^+ + \epsilon]$;
\item[](iv) $\min F_i \to 0$ as $i\to\infty$.
\end{itemize}
The graphs of $F_i$ and $H_i$ are shown in Fig.\ \ref{fig:Graph01}.
\begin{figure}[h]
\centering
\includegraphics[scale=0.34]{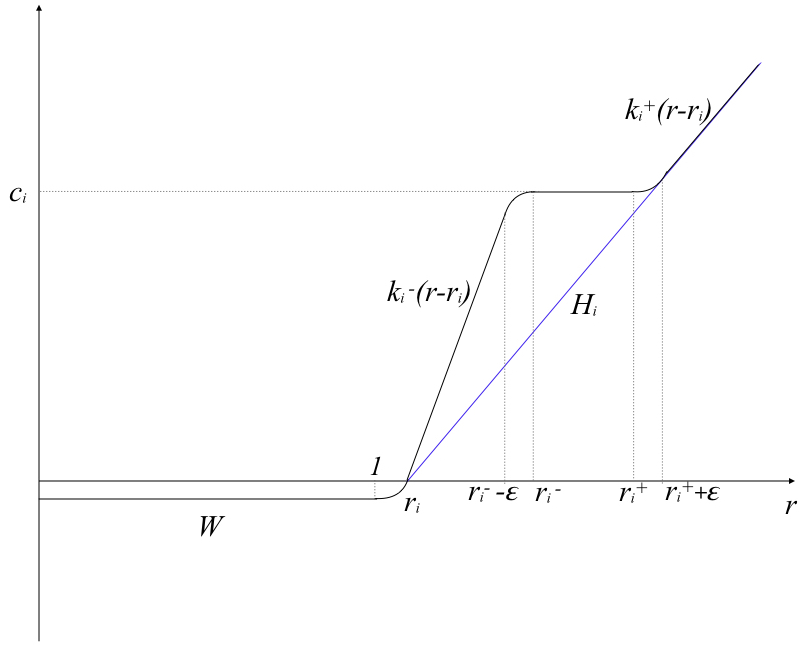}
\caption{Functions $F_i$ and $H_i$}
\label{fig:Graph01}
\end{figure} 

Then the sequence $\{F_i \}$ is cofinal and $F_i\geq H_i$. 
Thus we have  
$$
\SH^I(W) = \Dlim \HF^I(F_i),
$$ 
where the limit is taken over the sequence $\{F_i \}$.

Let us show that the map $\HF^I(F)\to \HF^{I+e}(F)$ is zero for a
Hamiltonian $F\in\{F_i\}_{i\geq N}$ and sufficiently large $N$.  By
the assumption that $W$ is displaceable in $\wh{W}$, there exists a
Hamiltonian $K\colon S^1 \times \wh{W}\to\R$ such that
$\phi_K^1(W) \cap W =\emptyset$ and $e(W) < \|K\| < e,$ where
$\phi_K^t$ is the Hamiltonian flow of $K.$ Consider the positive and
negative parts of Hofer's norm of $K$:
$$
\|K\|_+ = \int_{S^1}\,\, \max_{x\in\wh{W}} K(t,x) \,dt
$$
and
$$
\|K\|_- = \int_{S^1}\,\, -\min_{x\in\wh{W}} K(t,x) \,dt.
$$ 
Then $\|K\| = \|K\|_+ + \|K\|_-$. Choose a constant $s \gg 0$ 
meeting the following conditions: 
\begin{align*}
&\inf\CS(H) + s > b + \|K\|_+ ,\\ \nonumber
&\inf\CS(K) + s > b + \|K\|_+  . \nonumber
\end{align*}
Select constants $c$ and $r_{\pm}$ such that 
\begin{align*}
& c > s ; \\ \nonumber
& \supp K \subset W \cup \p W \times [1, r^+] , \\ \nonumber
& \phi_K^1 \mbox{ displaces } W \cup \p W \times [1, r^-] .
\end{align*}
\bigskip
Define a Hamiltonian $K\#F$ by 
$$K\#F(t,x) = K(t,x) + F\left(t, \left(\phi_K^t \right)^{-1}(x)\right).$$
Then $\phi_{K\#F}^t = \phi_K^t \circ \phi_F^t$ 
which is homotopic to the catenation of $\phi_F^t$ with $\phi_K^t$. 

Let $\PP(K)$ be the collection of one-periodic orbits of $K$ which are
contractible in $W$.  The collections $\PP(F)$ and $\PP(K\#F)$ are
defined similarly.  Then there is a one-to-one correspondence between
$\PP(H)$ and $\PP(F)$ for the orbits lying on a level
$r\in [r^+, r^+ + \epsilon]$.  Denote by $\PP(F, r^+)$ the collection
of such orbits in $\PP(F)$.  It is not hard to see that $\PP(K\#F)$
consists of the orbits in $\PP(K)$ and the orbits in $\PP(F, r^+).$
Indeed, all of the orbits in $\PP(F)$ on $r \leq r^-$ are displaced by
$\phi_K^1$.  Thus, the orbits near $r=r^+$ survive the displacement by
$\phi_K^1.$

Evaluating the action functional for  $x \in \PP(K)$ and $y \in
\PP(F,r^+)$, we have 
\begin{align*}
\CA_{K\#F}(x) & = -\int_{\bar{x}} \hat{\omega} + \int_{S^1}\, K\#F(t,x(t)) \,dt \\
&=\CA_K(x) +   \int_{S^1}\, F\left(t,\left(\phi_K^t\right)^{-1}(x(t))\right) \,dt \\
& = \CA_K(x) + c \\
&\geq b+\|K\|_+,
\end{align*}
where $\bar{x}$ is a capping of $x.$

Let $z \in \PP(H)$ be the orbit corresponding to $y \in \PP(F,r^+)$. Then
\begin{align*}
\CA_{K\#F}(y) & = -\int_{\bar{y}} \hat{\omega} + \int_{S^1}\, K\#F(t,y(t)) \,dt \\
& = -\int_{\bar{z}} \hat{\omega} +  
(r^+-1)\int_{z} \alpha  + \int_{S^1}\, H(t,z(t)) \,dt  + c\\
& = \CA_{H}(z) +  (r^+-1)\int_{z} \alpha + c \\
& \geq b+\|K\|_+,
\end{align*}
where $\bar{y}$ and $\bar{z}$ are cappings of $y$ and $z$, respectively. 

Let $H_s$ be a linear homotopy from $F$ to $K\# F$.  For
$x \in \PP(F)$ and $y \in \PP(K\# F),$ consider the moduli space
\begin{align*}
\Mm(x,y,H_s,J_s)=\{ u \in C^{\infty}(S^1\times \R, \wh{W}) &
| \lim_{s\to-\infty}u(t,s) = x, \lim_{s\to+\infty}u(t,s)=y, \\
& \p_s u + J_s (\p_t u -X_{H_s}(u) )  = 0 \}.
\end{align*}
If the moduli space is not empty, 
\begin{align*}
\CA_{K\#F}(y) &\leq \CA_F(x) + \int_{S^1} \int_{\R}\frac{\p H_s}{\p s} (u)\,ds dt \\
                       &\leq \CA_F(x) + \int_{S^1} \max_{x\in \wh{W}} (K\#F - F)\, dt  \\
                       &= \CA_F(x) + \|K\|_+ .
\end{align*}
Similarly, consider a linear homotopy from $K\#F$ to $F$. Then for
$x \in \PP(F)$ and $y \in \PP(K\# F),$ we have
\begin{align*}
\CA_F(x) \leq \CA_{K\#F}(y) + \|K\|_- .
\end{align*}
For $e > \|K\|,$ we have the following commutative diagram:
\begin{center}
\begin{tikzpicture}
\node (a1) {$\HF^{I}(F)$};
\node [right of=a1, xshift=60mm] (a2) {$\HF^{I+e}(F)$};
\node [below of = a1, yshift= -10mm](b1) {$\HF^{I+\|K\|_+}(K\#F)$};
\node [right of=b1, xshift=60mm] (b2) {$\HF^{I+\|K\|_+ +\|K\|_-}(F)$}; 
\path[draw, -latex'](a1) -- node[yshift=3mm]{(1)} (a2);
\path[draw, -latex'](b1) -- node[yshift=3mm]{} (b2);
\path[draw, -latex'](a1) -- node[yshift=0mm, xshift=3mm]{(2)} (b1);
\path[draw, -latex'](b2) -- node[yshift=0mm, xshift=5mm]{} (a2);
\end{tikzpicture}
\end{center}
Since $\CA_{K\#F}(x) \geq b+\|K\|_+$ for every $x \in \PP(K\#F),$ the
map (2) vanishes.  The map (1) vanishes as well.  By taking direct
limit of the map (1) over the cofinal sequence $\{F_i\},$ we see that
the map \eqref{eq:van} vanishes.

Next let us show that the map \eqref{eq:van} also vanishes when $W$ is
monotone, i.e., $[\omega]|_{\pi_2(W)} = \lambda \s_1(TW)|_{\pi_2(W)}$
for some nonzero constant $\lambda$. Let $H$ be an admissible
Hamiltonian. Consider the set
$$
\CS_q(H)=\left\{\CA_H(\bx) \mid \Delta_H(\bx) \in [q,q+2n] \right\},
$$ 
where $\dim W = 2n$ and $\Delta_H(\bx)$ is the mean index
of $\bx$,
and the set $ \CS_q(K)$ defined similarly for the Hamiltonian
$K$. Clearly, these sets are compact.
Now, choose a constant $s \gg 0$ meeting the following conditions: 
\begin{align*}
&\inf\CS_q(H) + s > b + \|K\|_+ ;\\ \nonumber
&\inf\CS_q(K) + s > b + \|K\|_+ .  \nonumber
\end{align*}
By the same argument as the case where $W$ is aspherical, we conclude
that the map \eqref{eq:van} is zero.
\end{proof}


\section{Prequantization bundles}
\label{sec:prequant}

\subsection{Generalities}
Let $(B^{2m},\sigma)$ be a symplectically aspherical manifold such
that $\sigma$ is integral. Then there exists a principal $S^1$-bundle
$\pi\colon M\to B$ and an $S^1$-invariant one-form $\alpha_0$ on it (a
connection form with curvature $\sigma$) such that
$\pi^*\sigma=d\alpha_0$. This is a \emph{prequantization
  $S^1$-bundle}. The form $\alpha_0$ is automatically contact and the
contact structure $\ker\alpha_0$ is a connection on $\pi$. The Reeb flow
of $\alpha_0$ is (up to a factor) the $S^1$-action on $M$. The factor is
the integral of $\alpha_0$ over the fiber and its value depends on
conventions and essentially boils down to the definition of an
integral form and an integral de Rham class; see, e.g., \cite[App.\
A]{GGK}. Here we assume that this integral is $\pi$, i.e., $\sigma$ is
integral if and only if $[\sigma]\in \H^2(B;\pi\Z)$. (We use the same
notation $\pi$ for the number $3.14\ldots$ and for the projection of a
principle $S^1$-bundle to the base; what is what should be clear from
the context.) 
The associated line bundle $\pi\colon E\to B$ is a symplectic manifold
with symplectic form
$$
\omega=\frac{1}{2}\big(\pi^*\sigma+d(r^2\alpha_0)\big)
$$
where $r\colon E\to [0,\infty)$ is the fiberwise distance to the zero
section. We call $(E,\omega)$ a \emph{prequantization line
  bundle}. (In the context of algebraic geometry prequantization line
bundles are often referred to as negative line bundles.)  Note that
away from the zero section we have
$$
\omega=\frac{1}{2}d\big((1+r^2)\alpha_0\big).
$$

Let $\alpha=f\alpha_0$ be a contact form on $M$ supporting
$\ker\alpha_0$.  Without loss of generality we may assume that
$f>1/2$. Then the fiberwise star shaped hypersurface given by the
condition $(1+r^2)/2=f$ and denoted by $M_f$ or $M_\alpha$ has contact
type, and the restriction of the primitive $(1+r^2)\alpha_0/2$ to
$M_f$ is exactly $\alpha$.  The domain bounded by $M_f$ in $E$, which
is sometimes denoted by $W_f$ or $W_\alpha$ in what follows, is a
strong symplectic filling of $M_f$ diffeomorphic to the associated
disk bundle. Clearly, we can identify $E$ with the completion
$\widehat{W}$.

Denote by $\tpi_1(M)$ the collection of free homotopy
classes of loops in $M$ or equivalently the set of conjugacy classes
in $\pi_1(M)$. Furthermore, let $\ff$ be the free homotopy class of the
fiber in $M$ or in $E\setminus B$ and $\ff^\Z=\{\ff^k\mid k\in\Z\}$.

Since $\sigma$ is aspherical, the homotopy long exact sequence of the
$S^1$-bundle $M\to B$ splits and we have
$$
1\to\pi_1(S^1)\to \pi_1(M)\to \pi_1(B)\to 1.
$$
It is not hard to see that $\ff^\Z$ is the image of
$\Z\cong \pi_1(S^1)$ in $\tpi_1(M)=\tpi_1(E\setminus B)$; \cite[Lemma
4.1]{GGM:CC}. Furthermore, this is exactly the set of free homotopy
classes of loops $x$ with contractible projections to $B$, i.e., of
loops contractible in $E$. The one-to-one correspondence
$\ff^\Z\to \Z$ is given by the \emph{linking number} $L_B(x)$ of $x$
with $B$. This is simply the intersection index of a generic disk
bounded by $x$ with $B$.

In what follows, \emph{all loops and periodic orbits are assumed to be
  contractible in $E$} unless stated otherwise.

Recall that to every such loop $x$ in $M_\alpha$ we can associate two
actions: the \emph{symplectic action} $\CA_\omega(x)$ obtained by
integrating $\omega$ over a disk bounded by $x$ in $E$ or $W$ and the
\emph{contact action} $\CA_\alpha(x)$ which is the integral of
$\alpha$ over $x$. These two actions are in general different.

\begin{Example}
\label{exam:fibers}
Let $M$ be the $S^1$-bundle $r=\eps$ in $E$. Then the closed Reeb
orbits $x$ in $M$ are the iterated fibers. (In particular, every Reeb
orbit is closed.) As a straightforward calculation shows, we have
$\CA_\omega(x)=\pi k \eps^2/2$ and $\CA_\alpha(x)=\pi k (1+\eps^2)/2$
for $x\in \ff^k$.
\end{Example}

\begin{Lemma}
\label{lemma:action-diff}
Let $x$ be a loop in $M_\alpha$ in the free homotopy class $\ff^k$,
$k\in\Z$, i.e., $L_B(x)=k$. Then
\begin{equation}
\label{eq:action-diff}
\CA_\omega(x)=\CA_\alpha(x)-\frac{\pi}{2}k.
\end{equation}
\end{Lemma}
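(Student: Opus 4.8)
The plan is to reduce \eqref{eq:action-diff} to the computation of Example \ref{exam:fibers} by first observing that $\CA_\alpha(x)-\CA_\omega(x)$ depends only on the free homotopy class of $x$ in $M_\alpha$, and then evaluating it on an iterated fiber. Two preliminary points are needed. First, $r^2\alpha_0$ extends to a smooth $1$-form on all of $E$ vanishing along $B$, so $\omega-\tfrac12\pi^*\sigma$ is exact on $E$; since $\pi$ sends every $2$-sphere in $E$ to a $2$-sphere in the symplectically aspherical base $B$, this gives $[\omega]|_{\pi_2(E)}=0$, and hence $\CA_\omega(x)=\int_D\omega$ does not depend on the choice of capping disk $D\subset E$. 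Second, on $E\ssminus B$ we have $\omega=d\lambda$ with $\lambda=\tfrac12(1+r^2)\alpha_0$, and because the exterior derivative commutes with restriction to $M_\alpha$ and $\lambda|_{M_\alpha}=\alpha$, we get $\omega|_{M_\alpha}=d\alpha$.

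Now let $x'$ be the fiber of $M_\alpha\to B$ over a chosen point $b_0\in B$, traversed $k$ times (in the opposite sense if $k<0$); like $x$ it lies in the class $\ff^k$, so there is an annulus $h\colon S^1\times[0,1]\to M_\alpha\subset E$ with $h(\cdot,0)=x'$ and $h(\cdot,1)=x$. Gluing $h$ onto a capping disk of $x'$ yields a capping disk of $x$, whence, using $h(S^1\times[0,1])\subset M_\alpha$ and $\omega|_{M_\alpha}=d\alpha$,
$$
\CA_\omega(x)-\CA_\omega(x')=\int_{S^1\times[0,1]}h^*\omega=\int_{S^1\times[0,1]}h^*d\alpha ,
$$
while Stokes' theorem applied to $h^*\alpha$ gives $\CA_\alpha(x)-\CA_\alpha(x')=\int_{S^1\times[0,1]}h^*d\alpha$. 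Subtracting, $\CA_\alpha(x)-\CA_\omega(x)=\CA_\alpha(x')-\CA_\omega(x')$, so it suffices to prove \eqref{eq:action-diff} for $x'$.

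This last step is a direct computation inside the single fiber $E_{b_0}\cong\C$. Writing $\rho(\vartheta)=\sqrt{2f(\vartheta)-1}$ for the radial profile of $M_\alpha$ over $b_0$ and using the normalization $\alpha_0|_{E_{b_0}}=\tfrac12\,d\vartheta$ (equivalent to $\int_{\mathrm{fiber}}\alpha_0=\pi$), one finds $\lambda|_{x'}=f\alpha_0$, hence $\CA_\alpha(x')=k\pi\bar f$ with $\bar f$ the average of $f$ over the fiber circle; capping $x'$ by the region $\{r\le\rho(\vartheta)\}$ in $E_{b_0}$ and using $\omega|_{E_{b_0}}=\tfrac12\,r\,dr\wedge d\vartheta$ gives $\CA_\omega(x')=\tfrac{k\pi}{2}\bigl(2\bar f-1\bigr)=k\pi\bar f-\tfrac{\pi}{2}k$. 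Subtracting yields \eqref{eq:action-diff}, and for constant $f$ this reproduces Example \ref{exam:fibers}.

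The two Stokes' theorem applications are routine; the point that requires care is the bookkeeping of normalization constants and orientations in the fiber computation — in particular deriving $\alpha_0|_{\mathrm{fiber}}=\tfrac12\,d\vartheta$ from the convention $\int_{\mathrm{fiber}}\alpha_0=\pi$, and checking that the linking number $k$ enters with the sign dictated by the complex orientation of the fibers of $E$, so that the correction term is exactly $-\tfrac{\pi}{2}k$ (the cross-check against Example \ref{exam:fibers} fixes this). As an alternative to the reduction argument, one could instead cap $x$ itself by a disk transverse to $B$, excise small disk neighborhoods of its $k$ signed intersection points with $B$, and apply Stokes to $\omega=d\lambda$ on the complement inside $E\ssminus B$; there the only subtlety is that each excised boundary circle contributes $\tfrac{\pi}{2}$ times its intersection sign in the limit as the neighborhoods shrink.
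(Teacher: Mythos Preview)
Your proof is correct and follows essentially the same approach as the paper: observe that $\CA_\alpha(x)-\CA_\omega(x)$ depends only on the free homotopy class of $x$ in $E\setminus B$, then evaluate on an iterated fiber. The paper's proof is a two-line sketch that simply asserts the topological invariance and cites Example~\ref{exam:fibers}; you have supplied the missing justification (the annulus/Stokes argument) and carried out the fiber computation explicitly on $M_\alpha$ itself rather than on the round bundle $r=\eps$, which is a harmless variation.
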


\begin{proof}
  It is clear that the difference $\CA_\omega(x)-\CA_\alpha(x)$ is a
  purely topological invariant completely determined by the free
  homotopy class of $x$ in $E\setminus B$. By Example
  \ref{exam:fibers}, we see that the difference is equal to $-\pi k/2$.
\end{proof}

It follows from Lemma \ref{lemma:action-diff} that the Hamiltonian
action is bounded from below by $-\pi k/2$ and the question if it can
really be negative was raised in, e.g., \cite{Oa:Leray-Serre}. Our next
proposition gives an affirmative answer to this question.

\begin{Proposition}
\label{prop:negative}
For every $k$ there exists a contact form $\alpha=f\alpha_0$, where
$f>1/2$, with a closed Reeb orbit $x$ in the class $\ff^k$ such that
$\CA_\omega(x)$ is arbitrarily close to $-\pi k/2$.
\end{Proposition}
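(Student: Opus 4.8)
The plan is to reduce the statement to a purely Reeb–dynamical one via Lemma~\ref{lemma:action-diff} and then to exhibit the required orbit by an explicit choice of contact form. Since a loop $x$ in the class $\ff^k$ satisfies $\CA_\omega(x)=\CA_\alpha(x)-\tfrac{\pi}{2}k$, it suffices to prove: for every $k\in\Z$ and every $\eps>0$ there is a contact form $\alpha=f\alpha_0$ with $f>1/2$ carrying a closed Reeb orbit in the class $\ff^k$ whose contact action (i.e.\ period) is less than $\eps$. Thus the whole problem becomes the construction of short closed Reeb orbits with a prescribed linking number with $B$.

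To do this I would take $f=\pi^*F$ for a function $F\colon B\to(1/2,\infty)$ and use the correspondence between the Reeb flow of $(\pi^*F)\alpha_0$ and a Hamiltonian flow on $(B,\sigma)$ generated by $1/F$ (cf.\ \cite{GGM:CC}): a direct computation of the Reeb field shows it projects to this base flow (with $F$ a first integral), while in the fiber coordinate $\theta\in\R/\pi\Z$, normalized so the fiber has period $\pi$, it advances at rate $1/F$. Consequently, if $\gamma$ is a closed orbit of the base flow of period $T$ on the level $\{F=F_0\}$ bounding a disk $D$ in $B$, the lift over $\gamma$ advances in the fiber by $\Delta\theta=T/F_0-\int_D\sigma$ per turn; it closes up after exactly $N$ turns, $N$ minimal with $N\Delta\theta\in\pi\Z$, and the resulting closed Reeb orbit $x$ has $\CA_\alpha(x)=NT$, hence $\CA_\omega(x)=NT-\tfrac{\pi}{2}k$ by Lemma~\ref{lemma:action-diff}, and linking number $L_B(x)=N\Delta\theta/\pi$. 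So $x$ lies in $\ff^k$ exactly when $N\Delta\theta=\pi k$, and it remains to arrange this together with $NT<\eps$.

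The concrete choice I would make is a radial profile in a Darboux chart. Fix a symplectic embedding into $B$ of a ball of radius $\rho$ with coordinate $|z|$, take $F=G(|z|^2)$ there (extended over $B$ so that $F>1/2$ everywhere), with $G$ strictly monotone near a value $s_0$, the sign of $G'$ chosen according to $\sgn k$. Then the round circle $\gamma$ of radius $\rho_0=\sqrt{s_0}$ is a closed base orbit of period $T=\pi G(s_0)^2/|G'(s_0)|$ bounding a disk of $\sigma$-area $\pm\pi\rho_0^2$. Given $k\neq0$ and $\eps>0$, pick $N\in\N$ with $N>|k|/\rho^2$ and $\gcd(N,k)=1$, set $\rho_0^2=(|k|-\sgn(k)\,\xi)/N$ for a small parameter $\xi>0$, and choose $G$ with $G(s_0)=1$ and $|G'(s_0)|=N/\xi$ (such a $G$, necessarily very steep, plainly exists with $G>1/2$). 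One then checks that $\Delta\theta=\pi k/N$, so that $x$ closes up after exactly $N$ turns and, by coprimality, lies in $\ff^k$, and that $\CA_\alpha(x)=NT=\pi\xi$, whence $\CA_\omega(x)=\pi\xi-\tfrac{\pi}{2}k\to-\tfrac{\pi}{2}k$ as $\xi\to0$. The case $k=0$ is the same but easier, with $N=1$ and $\rho_0\to0$.

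The delicate point — the only thing that is not bookkeeping — is this balancing act: making $F$ steeper shortens the base period $T$, but the orbit then encloses only a tiny $\sigma$-area, so many turns $N$ are needed before the fiber holonomy reaches $\pi k$; the identity $NT=\pi\xi$, obtained by tuning $\rho_0^2$ near $|k|/N$, shows that the gain from the short period outweighs the growth of $N$. The remaining care consists in keeping $\gcd(N,k)=1$ so $x$ does not close up prematurely in a class $\ff^{k/d}$, keeping $\rho_0<\rho$ so $\gamma$ stays in the Darboux chart, and keeping $F>1/2$ throughout.
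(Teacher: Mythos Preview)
Your proof is correct and takes a genuinely different route from the paper's. The paper invokes the $h$-principle to realize the class $\ff^k$ by an embedded loop \emph{tangent} to $\xi$, pushes it off to a transverse loop $x$ with $0<\int_x\alpha_0<\eps$, and then builds (by a standard construction) a contact form $\alpha=f\alpha_0$, $f>1/2$, having $x$ as a closed Reeb orbit with $\alpha|_x=\alpha_0|_x$; Lemma~\ref{lemma:action-diff} then gives the conclusion at once. Your approach instead exploits the bundle structure: taking $f=\pi^*F$ makes $\alpha$ $S^1$-invariant, the Reeb flow becomes the Hamiltonian flow of $1/F$ on $(B,\sigma)$ together with a fiber rotation, and an explicit radial profile in a Darboux chart lets you manufacture the short orbit by hand. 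This is more elementary---no $h$-principle is needed---and completely explicit; as a bonus it shows that the phenomenon already occurs among $S^1$-invariant contact forms, which the paper's argument does not give. The paper's argument, on the other hand, is indifferent to the bundle structure and transports verbatim to any situation where one has an analogue of Lemma~\ref{lemma:action-diff}. One small remark: the coprimality condition $\gcd(N,k)=1$ is not actually needed for the Proposition as stated, since an earlier closing would produce a simple orbit in $\ff^{k/d}$ whose $d$-fold iterate lies in $\ff^k$ with the same total contact action $NT=\pi\xi$; but including it does no harm.
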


\begin{proof}
  Recall that every free homotopy class can be realized by an embedded
  smooth oriented loop which is tangent to the contact structure; see,
  e.g., \cite{EM} and references therein. Let
  $y$ be such a loop in the class $\ff^k$. By moving $y$ slightly in
  the normal direction to $\dot{y}$ in $\xi=\ker\alpha_0$, i.e., in
  the direction of $J\dot{x}$ where $J$ is an almost complex structure
  on $\xi$ compatible with $d\alpha_0$, we obtain a transverse
  embedded loop $x$. The loop $x$ is nearly tangent to $\xi$ and we
  can ensure that
$$
0<\int_x \alpha_0 <\eps
$$
for an arbitrarily small $\eps>0$.

It is a standard (and easy to prove) fact that there exists a contact
form $\beta$ on $M$ supporting $\xi$ such that $x$ is, up to a
parametrization, a closed Reeb orbit of $\beta$. Let
$g=\alpha_0/\beta$, i.e., $g$ is defined by $\alpha_0=g\beta$. By
scaling $\beta$ if necessary, we can ensure that $g\leq 1$. In other
words, $1/g\geq 1$. It is not hard to see that $g|_x$ extends to
a function $h$ on $M$ so that $h/g> 1/2$ and the
derivative of $h$ in the normal direction to $x$ is zero, i.e.,
$\ker dh\supset \xi$ at all points of $x$. 

The latter condition guarantees that $x$ is still a closed Reeb orbit
of $\alpha:=h\beta=f\alpha_0$, where $f=h/g$. By construction,
$f>1/2$.  Furthermore, we have $\alpha|_x=\alpha_0|_x$, and hence
$$
0<\int_x \alpha = \int_x \alpha_0 <\eps.
$$
Thus, by \eqref{eq:action-diff}, 
$$
-\pi k/2<\CA_\omega(x)\leq\eps-\pi k/2
$$
and $\CA_\omega(x)$ can be made arbitrarily close to $-\pi k/2$.
\end{proof}

\subsection{Applications of homology vanishing to prequantization bundles}
 
When $B$ is aspherical, $\SH(W)=0$ for $W=W_f$ by the K\"unneth
formula from \cite{Oa:Leray-Serre}, and the results from Section
\ref{sec:vanishing} directly apply to $W$.

For instance, combining the Thom isomorphism $\H_*(B)=\H_*(W,\p W)[-2]$
with Proposition \ref{prop:hom-calc}, we obtain

\begin{Corollary}
\label{cor:hom-calc}
Assume that $B^{2m}$ is symplectically aspherical and $W=W_f$. Then
we have natural isomorphisms
\begin{itemize}
\item[\rm{(i)}] $\SH^-(W)=\H_*(B)[-m+1]$ and 
$\SH^{-,G}(W)=\H_*(B)\otimes \H_*(\CP^\infty)[-m+1]$;
\item[\rm{(ii)}] $\SH^+(W)=\H_*(B)[-m+2]$ and 
\begin{equation}
\label{eq:cor-tensor}
\SH^{+,G}(W)=\H_*(B)\otimes \H_*(\CP^\infty)[-m+2];
\end{equation}
\item[\rm{(iii)}] combined with the identification
  \eqref{eq:cor-tensor}, the Gysin sequence shift map
$$
\SH^{+,G}_{r+2}(W)\stackrel{D}{\longrightarrow}\SH^{+,G}_{r}(W)
$$
is the identity on the first factor and the map
$\H_{q+2}(\CP^\infty)\to \H_{q}(\CP^\infty)$, given by the pairing
with a suitably chosen generator of $\H^{2}(\CP^\infty)$, on the
second. In particular, $D$ is an isomorphism when $q\geq m+2$.
\end{itemize}
\end{Corollary}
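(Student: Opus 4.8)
The plan is to obtain the Corollary by specializing Proposition \ref{prop:hom-calc} to $W=W_f$, once two ingredients particular to this $W$ are supplied: the vanishing $\SH(W)=0$, and the Thom isomorphism identifying $\H_*(W,\p W)$ with $\H_*(B)$ up to a degree shift.

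First I would verify the hypotheses of Proposition \ref{prop:hom-calc} for $W=W_f$. The disk bundle $W$ deformation retracts onto the zero section $B$, so $\pi_2(W)\cong\pi_2(B)$; along the zero section $\omega$ restricts to $\frac12\sigma$ and $c_1(TW)$ to $c_1(TB)+c_1(E)$, and both of these classes vanish on $\pi_2(B)$ because $(B,\sigma)$ is symplectically aspherical. Hence $W$ is symplectically aspherical (as already observed in the introduction). Moreover, since $B$ is symplectically aspherical, the K\"unneth formula of \cite{Oa:Leray-Serre} gives $\SH(W)=0$ (equivalently $\SH^G(W)=0$). Proposition \ref{prop:hom-calc} then applies verbatim, giving $\SH^{\pm}(W)$ and $\SH^{\pm,G}(W)$ in terms of $\H_*(W,\p W)$ (and $\H_*(\CP^\infty)$), together with the description of the shift operator $D$ in part (iii).

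Next I would feed in the Thom isomorphism for the oriented complex line bundle $W\to B$: with rational coefficients $\H_k(W,\p W)\cong\H_{k-2}(B)$ for every $k$, i.e.\ $\H_*(W,\p W)=\H_*(B)[2]$. Since $W$ is the disk bundle of a complex line bundle over $B^{2m}$, we have $\dim W=2n=2m+2$, so $n=m+1$. Substituting and adding the grading shifts,
\begin{align*}
\SH^-(W)&=\H_*(W,\p W)[-n]=\H_*(B)[2-n]=\H_*(B)[-m+1],\\
\SH^+(W)&=\H_*(W,\p W)[-n+1]=\H_*(B)[3-n]=\H_*(B)[-m+2],
\end{align*}
and tensoring with $\H_*(\CP^\infty)$ yields the equivariant statements, in particular the identification \eqref{eq:cor-tensor}. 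This gives (i) and (ii).

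For (iii) I would invoke naturality of the Thom isomorphism: under $\H_*(W,\p W)\cong\H_*(B)[2]$ the description of $D$ in Proposition \ref{prop:hom-calc}(iii) transports directly, the ``identity on the first factor'' becoming the identity on the $\H_*(B)$-factor while the $\H_*(\CP^\infty)$-factor and the pairing with a generator of $\H^2(\CP^\infty)$ are unchanged. The range in which $D$ is an isomorphism refers to the (intrinsic) total grading of $\SH^{+,G}(W)$, hence is unaffected by the Thom shift, and $n+1=m+2$ turns the bound of Proposition \ref{prop:hom-calc}(iii) into the asserted one. I do not expect a genuine obstacle in any of this: the single substantive input is the vanishing $\SH(W)=0$ from \cite{Oa:Leray-Serre}, and the remainder is the grading bookkeeping displayed above.
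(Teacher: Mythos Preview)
Your proposal is correct and follows exactly the paper's approach: the paper derives the corollary in one line by combining the Thom isomorphism $\H_*(B)=\H_*(W,\p W)[-2]$ with Proposition \ref{prop:hom-calc}, after noting $\SH(W)=0$ via \cite{Oa:Leray-Serre}. Your write-up simply makes the grading bookkeeping $n=m+1$ and the verification of asphericity of $W$ explicit, which the paper leaves implicit.
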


\begin{Remark}
  Recall that even when $B$ is not aspherical but simply meets the
  standard conditions sufficient to have the (equivariant) symplectic
  homology of $W_f$ defined (e.g., that $E$ is weakly monotone), this
  homology is independent of $f$; see \cite{Vi:GAFA} and also
  \cite{Ri:AM} and Section \ref{sec:sympl_hom}.
\end{Remark}

Likewise, and Lemma \ref{lemma:pos} and Proposition
\ref{prop:capacities} yield

\begin{Corollary}
\label{cor:capacities}
Assume that
  $B$ is symplectically aspherical. Then, for every $\zeta\in \H_d(B^{2m})$
  and $W=W_f$, we
  have
\begin{equation}
\label{eq:cor-capacities}
0\leq \s^G_{\zeta,0}(W)\leq \s^G_{\zeta,1}(W)\leq
\s^G_{\zeta,2}(W)\leq\ldots
\textrm{ and }
\s^G_{\zeta,0}(W) \leq \s_\zeta(W),
\end{equation}
and, when $2k\geq 2m-d$,
$$
0\leq \s_{\zeta,k+1}^G(W)-\s_{\zeta,k}^G(W)\leq \s(W).
$$
Moreover, $\s^G_{\zeta,0}(W)>0$, and hence all capacities are strictly
positive.
\end{Corollary}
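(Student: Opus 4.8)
The plan is to establish the three groups of assertions in Corollary \ref{cor:capacities} one at a time, reducing the first two to results already proved in Section \ref{sec:vanishing} and treating only the strict positivity $\s^G_{\zeta,0}(W)>0$ by a genuinely new, and hopefully short, geometric argument. First, since $B$ is symplectically aspherical and integral, the total symplectic homology $\SH(W_f)=0$ by the K\"unneth formula of \cite{Oa:Leray-Serre} (this is recorded just above Corollary \ref{cor:hom-calc}), so Lemma \ref{lemma:pos} applies verbatim and gives the chain of inequalities $0\leq \s^G_{\zeta,0}(W)\leq\s^G_{\zeta,1}(W)\leq\cdots$ together with $\s^G_{\zeta,0}(W)\leq\s_\zeta(W)$; here we use the Thom isomorphism $\H_*(B)=\H_*(W,\p W)[-2]$ to rewrite the statements of Lemma \ref{lemma:pos} and of Corollary \ref{cor:hom-calc}(ii) in terms of $\H_d(B^{2m})$ (so that $\zeta\in\H_d(B)$ corresponds to a class in $\H_{d+2}(W,\p W)$ and $2k\geq 2m-d$ is precisely the condition $2(k+1)\geq 2n-(d+2)$ with $n=m+1$). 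Second, the growth bound $0\leq\s^G_{\zeta,k+1}(W)-\s^G_{\zeta,k}(W)\leq\s(W)$ in the range $2k\geq 2m-d$ is exactly Proposition \ref{prop:capacities} applied to $W=W_f$, again after the same index bookkeeping; nothing new is needed.

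The one point requiring work is the strict inequality $\s^G_{\zeta,0}(W)>0$, which by \eqref{eq:cor-capacities} also forces all the other capacities to be strictly positive. The natural strategy, alluded to in the Remark following Lemma \ref{lemma:pos}, is to compare $W=W_f$ with a slightly shrunk domain $W'\subset W$ and show that the Viterbo-type transfer map decreases the relevant spectral value by a definite amount; but the cleaner route here, exploiting the special structure of prequantization bundles, is to use Lemma \ref{lemma:action-diff}. Concretely, every nonconstant one-periodic orbit contributing to $\SH^+(W_f)$ is (a reparametrization of) an iterated closed Reeb orbit $x$ of $\alpha=f\alpha_0$ lying in some class $\ff^k$ with $k\geq 1$ (orbits in classes $\ff^k$ with $k\leq 0$ are either constant or do not survive, since a Reeb orbit has positive contact action and $k\leq -1$ would make even the contact action obstruction impossible in a prequantization bundle with the chosen orientation — more precisely, the contact action $\CA_\alpha(x)>0$ always, and one checks $k\geq 1$ for the fiber-homotopic orbits generating the positive homology). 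Thus by \eqref{eq:action-diff}, $\CA_\omega(x)=\CA_\alpha(x)-\pi k/2$, and since the closed Reeb orbits of $\alpha=f\alpha_0$ with $f>1/2$ in a fixed free homotopy class $\ff^k$ have contact action bounded below by a positive constant depending only on $\min f$ and $k$ (the period of the $S^1$-action scaled by $\min f$), there is a uniform positive lower bound on $\CA_\omega(x)$ over all orbits in $\ff^1$ — indeed $\CA_\alpha(x)\geq \pi\min f> \pi/2$ for an orbit in $\ff^1$, giving $\CA_\omega(x)\geq \pi\min f-\pi/2>0$. Since $\s^G_{\zeta,0}(W)$ is realized as an action value of such an orbit (the $k=0$, $\sigma_0$-piece of the equivariant positive homology, which corresponds to the shortest nonconstant orbits, i.e., those in $\ff^1$), we get $\s^G_{\zeta,0}(W)\geq \pi\min f-\pi/2>0$.

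The main obstacle is the claim that the spectral value $\s^G_{\zeta,0}(W)$ is actually attained on a genuine closed Reeb orbit in the class $\ff^1$, rather than merely bounded below by the infimum of such actions — i.e., the nondegeneracy-and-limit argument that the selector value is a critical value of the action functional and, for the bottom equivariant class $\zeta^G_0=\zeta^+\otimes\sigma_0$, lies in the $\ff^1$-summand of the positive homology. This uses the filtration of $\SH^+(W_f)$ by linking number $L_B$ developed in Section \ref{sec:linking}, or can be argued directly: for a cofinal family of admissible Hamiltonians $H$ built from $f$, the constant orbits have action $0$, the $\ff^1$-orbits have action bounded away from $0$ from below as above, and the continuation/selector argument from the proof of Lemma \ref{lemma:pos} shows the value of the selector on $\zeta^G_0$ is achieved by an orbit connected to a critical point of $H$ in $W$ by a Floer trajectory, which — by the Bourgeois--Oancea maximum principle and a homotopy-class count — must be an $\ff^1$-orbit. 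Passing to the limit over the cofinal family and over shrinking action windows $(-\infty,a)$ then yields $\s^G_{\zeta,0}(W)\geq\pi\min f-\pi/2>0$, completing the proof.

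\begin{proof}
Since $B$ is symplectically aspherical and integral, $\SH(W_f)=0$ by the K\"unneth formula of \cite{Oa:Leray-Serre}. Applying Lemma \ref{lemma:pos} and translating via the Thom isomorphism $\H_*(B)=\H_*(W,\p W)[-2]$, which sends $\zeta\in\H_d(B)$ to a class in $\H_{d+2}(W,\p W)$, we obtain \eqref{eq:cor-capacities}; here the condition $2k\geq 2m-d$ becomes $2(k+1)\geq 2n-(d+2)$ with $n=m+1$, so Proposition \ref{prop:capacities} gives $0\leq\s^G_{\zeta,k+1}(W)-\s^G_{\zeta,k}(W)\leq\s(W)$ in that range.

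It remains to prove $\s^G_{\zeta,0}(W)>0$; by \eqref{eq:cor-capacities} this then forces all the capacities to be strictly positive. Fix a cofinal family of admissible Hamiltonians $H=h(r)$ adapted to $W=W_f$ as in Section \ref{sec:sympl_hom}: $C^2$-small on $W$, with one-periodic orbits near $\p W$ in bijection with closed Reeb orbits of $\alpha=f\alpha_0$. The constant orbits have action $0$; a nonconstant one-periodic orbit corresponding to a closed Reeb orbit $x$ in the class $\ff^k$ has $k\geq 1$, since closed Reeb orbits have positive contact action and, in a prequantization line bundle with our orientation conventions, the classes $\ff^k$ with $k\geq 1$ are exactly the free homotopy classes of orbits contributing to the positive symplectic homology (see Section \ref{sec:linking}).

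By Lemma \ref{lemma:action-diff}, for such $x$ in $\ff^k$ with $k\geq 1$ we have
\[
\CA_\omega(x)=\CA_\alpha(x)-\frac{\pi}{2}k\geq \pi k\,\min f-\frac{\pi}{2}k
= \pi k\Big(\min f-\frac12\Big)\geq \pi\Big(\min f-\frac12\Big)>0,
\]
where $\CA_\alpha(x)\geq \pi k\,\min f$ because the contact action of an orbit in $\ff^k$ is at least the period $\pi k$ of the underlying $S^1$-action scaled by $\min f$, and $\min f>1/2$ by assumption.

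Finally, reasoning as in the proof of Lemma \ref{lemma:pos}: after a small nondegenerate perturbation of $H$ outside $W$, the value of the selector corresponding to $\zeta^G_0=\zeta^+\otimes\sigma_0$ is attained on a one-periodic orbit joined by a Floer trajectory to a critical point of $H$ in $W$. By the Bourgeois--Oancea maximum principle this orbit is a nonconstant orbit near $\p W$, hence lies in some $\ff^k$ with $k\geq 1$, and its action is therefore at least $\pi(\min f-1/2)$. Passing to the direct limit over the cofinal family and then over shrinking windows $(-\infty,a)$ yields $\s^G_{\zeta,0}(W)\geq\pi(\min f-1/2)>0$.
\end{proof}
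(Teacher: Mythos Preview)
Your reduction of the first two groups of inequalities to Lemma \ref{lemma:pos} and Proposition \ref{prop:capacities} via the Thom isomorphism is correct and matches the paper.

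The argument for strict positivity, however, has a genuine gap: the lower bound $\CA_\alpha(x)\geq \pi k\,\min f$ for a closed Reeb orbit in the class $\ff^k$ is false. You justify it by saying the contact action is ``at least the period $\pi k$ of the underlying $S^1$-action scaled by $\min f$'', which would follow from $\int_x\alpha_0\geq\pi k$; but $\int_x\alpha_0$ is not a homotopy invariant of the class $\ff^k$. Proposition \ref{prop:negative} constructs, for every $k\geq 1$, a contact form $\alpha=f\alpha_0$ with $f>1/2$ and a closed Reeb orbit $x$ in $\ff^k$ with $\int_x\alpha=\int_x\alpha_0$ arbitrarily small (and hence $\CA_\omega(x)$ arbitrarily close to $-\pi k/2$). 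Since the Corollary must hold for all $W_f$, this directly refutes your bound. The underlying error is that Reeb orbits of $f\alpha_0$ need not be close to fibers: they can be nearly Legendrian, winding very little in the $\alpha_0$-direction while still representing $\ff^k$.

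The paper's argument avoids this by a monotonicity-plus-model computation. The capacities $\s^G_{\zeta,0}$ are monotone under inclusions $W_{f'}\subset W_f$, so it suffices to prove strict positivity for the round tubular neighborhood $U=\{r\leq\eps\}$ with boundary the $S^1$-bundle $r=\eps$. For this $U$ the Reeb flow is the $S^1$-action, the Morse--Bott calculation of Section \ref{sec:linking-calc} identifies $\s^G_{\zeta,0}(U)$ with $\CA_\omega(x)$ for the simple fiber, and Example \ref{exam:fibers} gives $\CA_\omega(x)=\pi\eps^2/2>0$. Then $\s^G_{\zeta,0}(W_f)\geq\s^G_{\zeta,0}(U)>0$ for any $f>1/2$.
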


The new point here, when compared to the general results, is the last
assertion that the capacities are strictly positive. To see this, note
first that these capacities are monotone (with respect to inclusion) on
the domains $W_f$. Thus it suffices to show that $\s^G_{\zeta,0}(U)>0$
for a small tubular neighborhood $U$ of $B$ in $E$ bounded by the
$S^1$-bundle $r=\eps$. It is not hard to see that in this case
$\s^G_{\zeta,0}(U)=\CA_\omega(x)$ for a closed Reeb orbit $x$ on $M$;
see Section \ref{sec:linking-calc}. Hence, by Example
\ref{exam:fibers}, $\s^G_{\zeta,0}(U)\geq \pi\eps^2/2>0$.

\subsection{Stable displacement}
\label{sec:st-displ}
The zero section of a prequantization bundle $E$ is never
topologically displaceable since its intersection product with itself
is Poincar\'e dual, up to a non-zero factor, to $[\sigma]\neq 0$. As a
consequence, no compact subset containing the zero section is
topologically displaceable either. However, the situation changes
dramatically when one considers stable displaceability. (Recall that
$K\subset W$ is stably displaceable if $K\times S^1$ is displaceable
in $W\times T^*S^1$). The following observation essentially goes back
to \cite{Gu}.

\begin{Proposition}
\label{prop:st-displ}
The zero section is stably displaceable in $E$.
\end{Proposition}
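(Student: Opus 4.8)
The plan is to exhibit an explicit stabilizing circle action argument in the spirit of \cite{Gu}, Polterovich, and related work on stable displacement. First I would observe that the zero section $B\subset E$ carries the Hamiltonian $S^1$-action coming from the fiberwise rotation: the function $r^2/2$ (or rather a cut-off version of it) generates the circle action on $E$ which rotates the fibers, and $B$ is exactly its fixed point set. The key structural fact is that $B$ sits at the bottom of this action, so after passing to $E\times T^*S^1$ one can add to this rotation the translation in the cotangent direction of the auxiliary $S^1$ factor and produce a free circle action near $B\times S^1$ whose orbits sweep out an annular region transverse to $B\times S^1$. Concretely, take coordinates $(q,p)$ on $T^*S^1$ with $p$ the momentum, and consider the diagonal-type circle action on $E\times T^*S^1$ generated by a Hamiltonian of the form $G=\chi(r)\,r^2/2 + p$ (suitably cut off in $r$ and in $p$); near $B\times S^1$, i.e.\ for $r$ small, this is $r^2/2+p$, which generates a \emph{free} $S^1$-action because the $p$-translation has no fixed points.

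The second step is to turn this free action into an actual displacement. Since the orbits of the free circle action near $B\times S^1$ are embedded circles, a neighborhood of $B\times S^1$ is foliated by these orbits, and one can construct a Hamiltonian isotopy supported in a slightly larger neighborhood that pushes $B\times S^1$ off itself — this is the standard mechanism (``a closed subset contained in the zero level of a periodic Hamiltonian with no fixed points there is displaceable by a time-$T$ flow for appropriate $T$''), and here the periodic Hamiltonian is precisely the cut-off of $r^2/2+p$. More carefully: one chooses the cut-off function $\chi(r)$ so that $G$ equals $r^2/2+p$ for $r\le\eps$ and is constant in $r$ for $r\ge 2\eps$, and constant in $p$ outside a compact set; the resulting flow $\phi_G^t$ is $1$-periodic, its fixed point set is disjoint from $B\times S^1$ (the $+p$ term guarantees this), and hence $\phi_G^{1/2}$ (or some $\phi_G^{t_0}$) displaces $B\times S^1$ from itself inside $E\times T^*S^1$. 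That is exactly the statement that $B$ is stably displaceable in $E$.

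The main obstacle I anticipate is the non-compactness/support bookkeeping: $E$ is noncompact, $T^*S^1$ is noncompact, and the fiberwise-rotation Hamiltonian $r^2/2$ is not compactly supported, so one must be careful that the generating Hamiltonian $G$ is compactly supported (or at least of controlled behavior at infinity so that its flow is complete and the notion of displacement energy makes sense) while still retaining the no-fixed-points property near $B\times S^1$. The trick is that we only need $G$ to be honest near $B\times S^1$; far from there we are free to cut it off to a constant, and the $+p$ summand only needs to survive on the relevant neighborhood. One should check that after cutting off in $p$ the action is no longer globally free, but it remains fixed-point-free on the compact region we care about, which is all that is needed for displacing the compact set $B\times S^1$. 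A second, more minor point is to confirm that $r^2/2$ (times a cut-off) really does generate the fiberwise $S^1$-action with the normalization implicit in $\omega=\tfrac12(\pi^*\sigma+d(r^2\alpha_0))$; this is a direct computation with $i_{X}\omega=-dG$ using $i_{\p_\theta}(d(r^2\alpha_0))=-d(r^2)$ along the fibers, and it fixes the period, hence the energy estimate.

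Alternatively, and perhaps more cleanly, one can invoke the general principle of \cite{Gu} directly: a compact subset of a symplectic manifold is stably displaceable as soon as it lies in a neighborhood admitting a free (or fixed-point-free) Hamiltonian circle action after stabilization; the fiberwise rotation of $E$ restricted to a tubular neighborhood $r\le\eps$ of $B$ provides such an action on the neighborhood, it is \emph{not} free (it fixes $B$), but stabilizing by the rotation of $T^*S^1$ kills the fixed locus, which is precisely Gürel's stabilization trick. Invoking this gives the result with the displacement contained entirely in a prescribed small neighborhood $\{r\le\eps\}\times T^*S^1$ of $B\times S^1$, so in particular $e_{\st}(B)$ can be taken arbitrarily small.
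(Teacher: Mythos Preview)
Your explicit construction has a genuine gap: the time-$t$ flow of $G=r^2/2+p$ does \emph{not} displace $B\times S^1$. On the zero section $B$ the fiberwise rotation is the identity, and the flow of $p$ on $T^*S^1$ is the base rotation $(q,p)\mapsto(q+t,p)$; hence $\phi_G^t$ sends $(b,q,0)\in B\times S^1$ to $(b,q+t,0)\in B\times S^1$. In other words $B\times S^1$ is a union of $G$-orbits and is preserved setwise by the entire flow. The principle you invoke --- ``a compact set in the zero level of a periodic Hamiltonian without fixed points is displaced by some $\phi^{t_0}$'' --- is false as stated: absence of fixed points means no \emph{point} is fixed, not that any given \emph{set} is moved off itself. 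The zero section $S^1\subset T^*S^1$ with $H=p$ is already a counterexample. Your alternative paragraph misquotes \cite{Gu}: that paper does not prove that a free Hamiltonian circle action on a neighborhood yields displaceability.

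The paper's argument is short but different in kind. It observes that $B$ is a \emph{symplectic} submanifold of $E$, so $B':=B\times S^1$ is nowhere coisotropic in $E\times T^*S^1$ (the symplectic orthogonal of $T_bB\times T_qS^1$ always contains the nonzero vector $\p_p$, which is not tangent to $B'$). Moreover $B'$ is smoothly infinitesimally displaceable: $\p_p$ is a nowhere-vanishing normal vector field. These two hypotheses are exactly the input to \cite[Thm.\ 1.1]{Gu}, which then gives Hamiltonian displaceability of $B'$. G\"urel's theorem is not a circle-action statement; its proof is a local-to-global construction exploiting the symplectic normal form of a nowhere-coisotropic submanifold, and there is no shortcut via the rotation flow you wrote down.
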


\begin{proof}
  The zero section $B$ is a symplectic submanifold of $E$. Thus
  $B':=B\times S^1$ is nowhere coisotropic in $E\times T^*S^1$, i.e.,
  at no point the tangent space to $B'$ is coisotropic. Furthermore,
  $B'$ is smoothly infinitesimally displaceable: there exists a
  non-vanishing vector field along $B'$ which is nowhere tangent to
  $B'$. Now the proposition follows from \cite[Thm.\ 1.1]{Gu}. (When
  $\dim B=2$, one can also use the results from \cite{LS,Po:displ}).
\end{proof}

\begin{Remark}
\label{rmk:generalization}
This argument shows that every closed symplectic submanifold $B$ of
any symplectic manifold is stably displaceable.
\end{Remark}

As a consequence of Proposition \ref{prop:st-displ}, a sufficiently
small tubular neighborhood of $B$ is also stably displaceable in
$E$. However, in contrast with the case of Liouville manifolds, this
is not enough to conclude that arbitrary large tubular neighborhoods,
and thus all compact subsets of $E$, are also stably displaceable and
in fact they need not be.

\begin{Example}
  Let $E$ be the tautological bundle over $B=\CP^1$, i.e., a blow up of
  $\C^2$. Then $E$ contains a monotone torus $L$ which is the restriction of
  the $S^1$-bundle (for a suitable radius) to the equator. It is
  known that $\HF(L,L)\neq 0$; \cite[Sect.\ 4.4]{Sm}. Then, by the
  K\"unneth formula, $\HF(L',L')\neq 0$ where $L'=L\times S^1$ in
  $E\times T^*S^1$. See \cite{RS,Ve} for generalizations of this
  example and its connections with non-vanishing of symplectic homology.
\end{Example}

\begin{Remark}
  Note that Proposition \ref{prop:st-displ} holds for any base $B$,
  but gives no information about $\SH(W)$.  The reason is that the
  K\"unneth formula does not directly apply in this case even when $B$
  is aspherical; see \cite{Oa:Kunneth}.  The boundary of a tubular
  neighborhood $U$ of $B\times S^1$ in $E\times T^*S^1$ does not have
  contact type. Moreover, the symplectic form is not even exact near
  the boundary. As a consequence, the symplectic homology of $U$ is
  not defined. One can still introduce an \emph{ad hoc} variant of
  such a homology group to have the K\"unneth formula and then reason
  along the lines of the proof of Theorem \ref{thm:van} to show that
  this homology, and hence $\SH(W)$, vanishes. However, this argument
  is not much simpler than the proof in \cite{Oa:Leray-Serre}. (We
  refer the reader to \cite{Ri:AM,RS,Ve} for a variety of
  vanishing/non-vanishing results for $\SH(W)$ expressed in terms of
  the conditions on the base $B$.)

  On the other hand, the proposition does imply, via the K\"unneth
  formula, vanishing of the Rabinowitz Floer homology for low energy
  levels in $E$, i.e., for $r$ close to zero, proved originally in
  \cite{AK}. Note in this connection that, as was pointed out to us by
  Alex Oancea, the Rabinowitz Floer homology might depend on the
  energy level in this case.
\end{Remark}

Proposition \ref{prop:st-displ} has some standard consequences along
the lines of the almost existence theorem, the Weinstein conjecture
and lower bounds on the growth of periodic orbits, which all are
proved via variants of the displacement energy--capacity inequalities
and are accessible by several methods requiring somewhat different
assumptions on $(B,\sigma)$; see, e.g., \cite{Po:Geom} and also
\cite{Gi:We}. Here, dealing with the almost existence, we adopt the
setting from \cite{Sc} which is immediately applicable.

The condition which $E$ must satisfy then is that it is \emph{stably
  strongly semi-positive} in the sense of \cite{Sc}, which is the case
if and only if $N_B\geq n+1$ or $(B^{2n},\sigma)$ is positive
monotone, i.e., $c_1(TB)=\lambda[\omega]$ on $\pi_2(B)$ where
$\lambda\geq 0$ and in addition we require that
$\left<[\omega],\pi_2(B)\right>=0$ whenever
$\left<c_1(TB),\pi_2(B)\right>=0$. (Here $N_B$ is the minimal Chern
number of $B$. Note that $N_E=N_B-1$.)

\begin{Corollary}[Almost Existence in $E$ near $B$; \cite{Lu}]
\label{cor:OE}
Assume that $B$ is as above. Let $H$ be a smooth, proper, automous
Hamiltonian on $E$ and let $I$ be a (possibly empty) interval such
that $\{H=c\}$ is contained in a sufficiently small neighborhood of
$B$ in $E$. Then, for almost all $c\in I$ in the sense of measure
theory, the level $\{H=c\}$ carries a periodic orbit of $H$.
\end{Corollary}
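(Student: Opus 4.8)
The plan is to derive the statement from the stable displaceability of the zero section, Proposition~\ref{prop:st-displ}, via the displacement energy--capacity inequality and the Hofer--Zehnder almost existence mechanism, carried out in the semi-positive framework of \cite{Sc} to which the hypotheses on $B$ have been tailored.

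First I would \emph{localize}. Using that $H$ is proper, that $\{H=c\}$ lies in a fixed small neighborhood of $B$ for every $c\in I$, and that $B$ admits arbitrarily small tubular neighborhoods that are stably displaceable in $E$ (Proposition~\ref{prop:st-displ}), I fix a closed tubular neighborhood $U\supset B$, bounded by a fibrewise star-shaped hypersurface, that contains all these levels and is stably displaceable, so that $e_\st(U)<\infty$. Replacing $H$ outside $U$ by a function that is compactly supported in $U$ does not change the levels $\{H=c\}$ with $c\in I$, so we may assume $\supp H\subset U$.

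Next I would invoke the $\pi_1$-sensitive, stabilized displacement energy--capacity inequality. The role of the hypothesis on $B$ is exactly that $E$, and with it $E\times T^*S^1$, is then stably strongly semi-positive in the sense of \cite{Sc}: since $N_E=N_B-1$, this holds precisely when $N_B\ge n+1$ or $(B^{2n},\sigma)$ is positive monotone as stated. Under this condition the Floer-theoretic construction of the Hofer--Zehnder capacity $\CHZ^{\circ}$, defined using orbits contractible in $E$, applies to $U$ and gives $\CHZ^{\circ}(U)<\infty$, bounded in terms of $e_\st(U)$; see \cite{Sc} and also \cite{Lu}. This is the step into which the (stable) displaceability and the semi-positivity of the base genuinely enter.

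Finally, from $\CHZ^{\circ}(U)<\infty$ the conclusion follows by the classical Struwe--Hofer--Zehnder argument. For a compact subinterval $J\Subset I$ I would build from $H$ a one-parameter family of ``plateau'' Hamiltonians supported in $U$ whose oscillation exceeds $\CHZ^{\circ}(U)$; each such Hamiltonian carries a non-constant periodic orbit of period at most $1$, and the capacity bound keeps these orbits inside $U$ with a priori bounded energy, so they neither degenerate nor escape. Applying Sard's theorem to $H$ on $J$ and passing to the limit over an exhaustion of $I$ by such subintervals then yields, for almost every $c\in I$, a periodic orbit of $H$ on the level $\{H=c\}$. The one step requiring genuine care --- the ``main obstacle'', though a mild one given that the framework of \cite{Sc} is designed for this --- is the semi-positivity bookkeeping that translates the hypothesis on $B$ into the hypothesis on $E$ and on its $T^*S^1$-stabilization; the remainder is routine.
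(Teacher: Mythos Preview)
Your proposal is correct and follows essentially the same route as the paper: stable displaceability of a small tubular neighborhood of $B$ (Proposition~\ref{prop:st-displ}) combined with the displacement energy--capacity inequality of \cite{Sc} in the stably strongly semi-positive setting yields finiteness of the Hofer--Zehnder capacity, and almost existence then follows by the standard mechanism. The paper's own justification is terser---it simply records this as an immediate consequence of \cite{Sc} and notes that each level $\{H=c\}$ bounds a domain whose Hofer--Zehnder capacity is finite---but the logic is the same; your elaboration of the semi-positivity bookkeeping ($N_E=N_B-1$, stabilization by $T^*S^1$) and of the Struwe--Hofer--Zehnder step merely fills in details the paper leaves implicit.
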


This is an immediate consequence of the displacement energy--capacity
inequality from \cite{Sc}. (Note that in this case the level $\{H=c\}$
automatically bounds a domain in a small tubular neighborhood of $B$,
and hence the Hofer--Zehnder capacity of this domain, as a function of
$c$ with finite values, is defined; see \cite{MS} for a different
approach.) The corollary is not the most general result of this
kind. It is a particular case of the main theorem from
\cite{Lu}. However, our proof is simpler than the argument \emph{ibid}
and, in fact, Remark \ref{rmk:generalization} can be used to simplify
some parts of that argument.  Corollary \ref{cor:OE} implies the
Weinstein conjecture for contact type hypersurfaces in $E$ near $B$.
There are, of course, many other instances where the Weinstein
conjecture is known to hold for hypersurfaces in $E$. For example,
although to the best of knowledge it is still unknown if it holds in
general for prequantization bundles, it does hold under suitable
additional conditions. For instance, this is the case when $\sigma$ is
aspherical, \cite{Oa:Leray-Serre}, or more generally if
$\pi^*[\sigma]$ is nilpotent in the quantum cohomology of
$E$;~\cite{Ri:AM}.

The second application is along the lines of the Conley conjecture
(see \cite{GG:CC} or \cite[Prop.\ 4.13]{Vi:gen}) and concerns the
number or the growth of simple periodic orbits of compactly supported
Hamiltonian diffeomorphisms. For the sake of simplicity, we assume
that $\sigma$ is aspherical although this condition can be relaxed.

Let $H\colon S^1\times E\to \R$ be a compactly supported Hamiltonian.

\begin{Corollary}
  Assume that $\sigma$ is symplectically aspherical and $\supp H$ is
  contained in a sufficiently small neighborhood of $B\times S^1$ in
  $E\times T^*S^1$. Then $\varphi_H$ has infinitely many simple
  contractible periodic orbits with non-zero action provided that
  $\varphi_H\neq \id$. Moreover, when $H\geq 0$ the number of such
  orbits of period up to $k$ and with positive action grows at least
  linearly with $k$ unless, of course, $H=0$.
\end{Corollary}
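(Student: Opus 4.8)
The plan is to prove this by the action--selector approach to the Conley conjecture (as in \cite{GG:CC} and \cite[Prop.\ 4.13]{Vi:gen}), feeding in the vanishing of the symplectic homology established above. I would fix a disk subbundle $W=W_f$ whose interior contains $\supp H$, so that $\SH(W)=0$ and Corollary \ref{cor:hom-calc} applies. Since the zero section $B$ is symplectic, hence not displaceable in $E$, while being stably displaceable by Proposition \ref{prop:st-displ}, the argument is carried out in the stabilization $E\times T^*S^1$, which is again symplectically aspherical: a tubular neighbourhood of $B\times S^1$ containing $\supp H\times S^1$ is displaceable there, with some energy $e$, and the periodic orbits of $\varphi_H$ contractible in $E$ correspond to those of the stabilized Hamiltonian. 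For each $k\geq 1$ the Hamiltonian $kH$ has slope $0\notin\CS(\alpha)$ outside a compact set, so $\HF^I(kH)$ is defined, and --- with the sign conventions of Section \ref{sec:conventions} --- one obtains the normalized action selectors $\s_\pm(\varphi_H^k)$ of the fundamental class; all one-periodic orbits entering these constructions are contractible in $E$.

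The argument rests on three properties of these selectors. First, \emph{uniform boundedness}: $0\leq\s_+(\varphi_H^k)-\s_-(\varphi_H^k)\leq e$, and indeed $|\s_\pm(\varphi_H^k)|\leq e$, for every $k$. This is where the hypotheses enter: it follows from Theorem \ref{thm:van} applied to the displaceable stabilized domain --- uniform instability with constant $e$ kills every filtered class after an action shift $e$ and hence bounds the selectors --- or, if one prefers to stay in $E$, from $\SH(W)=0$ via Proposition \ref{prop:irie}, whose uniform-instability constant $\s(W)$ controls the boundary depth of every $\HF^I(kH)$ and therefore the selectors. Second, \emph{positivity}: $\s_+(\varphi_H^k)-\s_-(\varphi_H^k)>0$ whenever $\varphi_H^k\neq\id$, the non-degeneracy of the spectral norm, which in the present merely aspherical setting leans on the strict positivity of the capacities in Corollary \ref{cor:capacities}. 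Third, \emph{spectrality}: $\s_\pm(\varphi_H^k)\in\CS(kH)$, the value being attained on a one-periodic orbit of $kH$, necessarily an iterate of a simple periodic orbit $x$ of $\varphi_H$ (possibly constant) of some period $p\mid k$, with $\s_\pm(\varphi_H^k)=(k/p)\,A_x$, where $A_x$, the action of $x$ regarded as a one-periodic orbit of $pH$, is nonzero whenever the selector is.

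Granting these, the conclusions follow by the counting arguments standard in the Conley conjecture (see \cite{GG:CC}). For infinitude: if $\varphi_H$ had only finitely many simple contractible periodic orbits with nonzero action, then by spectrality $\CS(kH)\subset k\cdot S$ for a fixed finite set $S\ni 0$; but $\varphi_H\neq\id$ forces $\varphi_H^k\neq\id$ for infinitely many $k$ (if $\varphi_H^k=\id$ for two consecutive $k$ then $\varphi_H=\id$), and for such $k$ the positive quantity $\s_+(\varphi_H^k)-\s_-(\varphi_H^k)$ lies in $\big(k\cdot(S-S)\big)\cap(0,e]$, which is empty once $k>e/\min\{|s-s'|:s\neq s'\in S\}$ --- a contradiction. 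For the linear growth when $H\geq 0$, one checks from the monotone--homotopy convention that $k\mapsto\s_+(\varphi_H^k)$ is monotone, hence (being bounded) convergent; this, together with spectrality and the fact that each simple orbit, through Bott iteration, can feed the fixed-rank part of $\HF(kH)$ over only a bounded range of degrees, yields by the usual mean-index counting that the number of simple contractible periodic orbits of period $\leq k$ with positive action grows at least linearly in $k$.

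The step I expect to be the main obstacle is the uniform boundedness of the selectors: turning $\SH(W)=0$ (equivalently, by Proposition \ref{prop:irie}, the uniform instability of the filtered homology), or the stable displaceability of $B$ via Proposition \ref{prop:st-displ} and Theorem \ref{thm:van}, into an a priori bound on $\s_\pm(\varphi_H^k)$ that is genuinely independent of $k$ --- in a setting where the filling is only aspherical, periodic orbits may carry negative symplectic action (Proposition \ref{prop:negative}), and one must first reduce from $E$ to the displaceable stabilization $E\times T^*S^1$ while keeping track of contractibility and of the action filtration. Once this is in place, positivity is immediate from Corollary \ref{cor:capacities}, and the remaining counting is routine.
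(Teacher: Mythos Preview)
The paper's own proof is simply two citations: the first assertion is \cite[Thm.~2]{FS} and the second is \cite[Thm.~1.2]{Gu}, both applied to the stabilization $E\times T^*S^1$ using the stable displaceability of $B$ from Proposition~\ref{prop:st-displ}. You are effectively reproving those black-box results from scratch via action selectors, which is a reasonable thing to attempt and is indeed the mechanism underlying \cite{FS,Gu}; but several of your steps, as written, do not go through.

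First, the positivity you need is \emph{non-degeneracy of the spectral norm}, i.e., $\s_+(\varphi_H^k)-\s_-(\varphi_H^k)>0$ whenever $\varphi_H^k\neq\id$. This is a statement about the Floer-theoretic spectral invariants of a compactly supported Hamiltonian diffeomorphism; it has nothing to do with Corollary~\ref{cor:capacities}, which asserts positivity of the \emph{capacities} $\s^G_{\zeta,k}(W)$ attached to the domain $W$ via $\SH^{+,G}(W)$. Those are different invariants living in different homology theories, and the strict positivity in Corollary~\ref{cor:capacities} gives you no information about $\gamma(\varphi_H^k)$. In the aspherical setting the non-degeneracy of $\gamma$ is a separate theorem (Schwarz for closed manifolds, Frauenfelder--Schlenk for the open convex case), and it is precisely the content you are trying to import from \cite{FS}.

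Second, your linear-growth sketch conflates two unrelated mechanisms. ``Bott iteration'' and ``mean-index counting'' are the tools of the Conley conjecture on closed aspherical manifolds, where one exploits index growth against a fixed nontrivial total homology; here there is no such homology to play against (indeed $\SH(W)=0$). The actual argument in \cite{Gu} stays entirely on the action side: boundedness of the selector together with spectrality forces, for each $k$, a carrier of period $p_k\mid k$ with normalized action $A/p_k\le e/k$, and a direct counting of how many $k$'s a single simple orbit can serve yields the linear lower bound. Your monotonicity-plus-convergence observation is not enough to reach this conclusion.

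Your infinitude argument, by contrast, is essentially correct once you take $S=\{A_x/p_x\}\cup\{0\}$ over simple orbits (so that $\CS(kH)\subset k\cdot S$ genuinely holds for all $k$); but note that even this step ultimately rests on the non-degeneracy of $\gamma$, which you have not justified. In short: your route is the right one in spirit and coincides with what \cite{FS,Gu} do, but you are invoking the wrong lemma for positivity and the wrong toolkit for linear growth.
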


Here the first assertion follows readily from \cite[Thm. 2]{FS} and
the second assertion is a consequence of \cite[Thm.\ 1.2]{Gu}.

\section{Linking number filtration}
\label{sec:linking}
In this section we construct and utilize a new filtration on the
positive (equivariant) symplectic homology of a prequantization disk
bundle $W\to B$. This filtration is, roughly speaking, given by the
linking number of a closed Reeb orbit and the zero section. It
``commutes'' with the Hamiltonian action filtration and plays
essentially the same role as the grading by the free homotopy class in
the contact homology of the corresponding $S^1$-bundle. Although the
linking number filtration can be defined in a more general setting, it
is of particular interest to us when the base $B$ is symplectically
aspherical. This is the assumption we will make henceforth. We will
then use the linking number filtration to reprove the non-degenerate
case of the contact Conley conjecture, originally established in
\cite{GGM:CC,GGM:CC2}, without relying on the machinery of contact
homology.

\subsection{Definition of the linking number filtration}
\label{sec:linking-def}
Throughout this section we keep the notation and convention from
Section \ref{sec:prequant}. In particular, $E$ and $M$ are the
prequantization line and, respectively, $S^1$-bundles over a
symplectically aspherical manifold $(B^{2m},\sigma)$, and $f> 1/2$ is
a function on $M$. The domain $W=W_f$ is bounded by the fiberwise star
shaped hypersurface $(1+r^2)/2=f$. This hypersurface $M_f$ has contact
type and the restriction of the primitive $(1+r^2)\alpha_0/2$ (on
$E\setminus B$) to $\p W$ is $\alpha=f\alpha_0$. Furthermore, recall
that all loops and periodic orbits we consider are assumed to be
contractible in $E$ unless stated otherwise.

Assume first that $\alpha$ is non-degenerate and let $H$ be a
time-dependent admissible Hamiltonian on $E=\widehat{W}$. We require $H$ to be
constant on a neighborhood $U$ of $B$ and such that all one-periodic
orbits $x$ of $H$ outside $U$ are small perturbations of closed Reeb
orbits. We will call these orbits non-constant. For a generic choice
of such a Hamiltonian $H$ all non-constant orbits are non-degenerate.

Fix an almost complex structure $J$ on $E$ compatible with $\omega$,
which we require to be independent of time near $B$ and outside a
large compact set, and such that $B$ is an almost complex submanifold
of $E$. Consider solutions $u\colon \R\times S^1\to E$ of the Floer
equation for $(H,J)$ asymptotic as $s\to\pm \infty$ to non-constant
orbits. By the results from \cite{FHS}, the regularity conditions are
satisfied for a generic pair $(H,J)$ meeting the above requirements
and, moreover, for a generic Hamiltonian $H$ as above when $J$ is
fixed. With this in mind, we have the complex $\CF^+(H)$ generated by
non-constant one-periodic orbits of $H$, contractible in $E$, and
equipped with the standard Floer differential. Clearly, the homology
of this complex is $\HF^+(H)$.

The complex $\CF^+(H)$ carries a natural filtration by the linking number
with $B$. Indeed, since $H$ is constant near $B$, every solution $u$ of the
Floer equation is a holomorphic curve near $B$, and the intersection
index of $u$ with $B$ is non-negative since $B$ is an almost complex
submanifold of $E$. When $u$ is a solution connecting $x$ to $y$, the
difference $L_B(x)-L_B(y)$ is exactly this intersection number. Thus
\begin{equation}
\label{eq:decr}
L_B(x)\geq L_B(y)
\end{equation}
and the Floer differential does not increase $L_B$. In other words,
for every $k\in\Z$, the subspace $\CF^+\big(H,\ff^{\leq k}\big)$
generated by the orbits $x$ with $L_B(x)\leq k$ is a subcomplex and we
obtain an increasing filtration of the complex $\CF^+(H)$. Set
$$
\CF^+\big(H,\ff^k\big):= \CF^+\big(H,\ff^{\leq k}\big)/\CF^+\big(H,\ff^{\leq k-1}\big).
$$
We denote the homology of the resulting complexes by
$\HF^+\big(H,\ff^{\leq k}\big)$ and, respectively,
$\HF^+\big(H,\ff^k\big)$.

Passing to the direct limit over $H$, we obtain the homology groups
$\SH^+\big(W,\ff^{\leq k}\big)$ and, $\SH^+\big(W,\ff^{k}\big)$, which
fit into a long exact sequence
$$
\ldots\to \SH^+\big(W,\ff^{\leq k-1}\big)\to \SH^+\big(W,\ff^{\leq
  k}\big)\to \SH^+\big(W,\ff^k\big) \to \ldots.
$$
Furthermore, the complexes $\CF^+\big(H,\ff^{\leq k}\big)$ and
$\CF^+\big(H,\ff^k\big)$ inherit the filtration by the Hamiltonian
action from the complex $\CF(H)$ and this filtration descendes to the
homology groups.

The construction extends to the equivariant setting in a
straightforward way and we only briefly outline it. Following
\cite{BO:Gysin,BO17}, consider a parametrized Hamiltonian
$\tH\colon E\times S^1\times S^{2m+1}\to \R$ invariant with respect to
the diagonal $G=S^1$-action on $S^1\times S^{2m+1}$ and meeting a
certain admissibility condition at infinity. For instance, it is
sufficient to assume that at infinity $\tH$ is independent of
$(t,\zeta)$ and admissible in the standard sense.  We will sometimes
write $\tH_\zeta(t,z)$ for $\tH(t,z,\zeta)$.  Let $\Lambda$ be the
space of contractible loops $S^1\to E$. The Hamiltonian $\tH$ gives
rise to the action functional
$$
\CA_{\tH} \colon \Lambda\times S^{2m+1}\to\R
$$
which is a parametrized version of the standard action
functional. The critical points of $\CA_{\tH}$ are the pairs $(x,\zeta)$
satisfying the condition:
$$
\textrm{the loop $x$ is a one-periodic orbit of $H_\zeta$ and }
\int_{S^1} \nabla_\zeta\tH_\zeta(t,x(t))\,dt=0.
$$
It is convenient to assume that $\tH$ is a small perturbation of an
ordinary non-degenerate Hamiltonian $H$. Then $x$ in such a pair
$(x,\zeta)$ is small perturbation of a one-periodic orbit of $H$.

Due to $S^1$-invariance of $\tH$, the pairs $(x,\zeta)$ come in
families $S$, called \emph{critical families}, even when $\tH$ is
non-degenerate. The complex $\CF^{G}(\tH)$ is generated by the
families $S$. As in the non-equivariant case, one has a subcomplex
$\CF^{-,G}(\tH)$ generated by the critical families $S$ where $x$ is a
constant orbit and the quotient complex $\CF^{+,G}(\tH)$.  The
essential point is that again the Hamiltonian $\tH$ can be taken
constant on $U$ on every slice $E\times(t,\zeta)$. Then the quotient
complex $\CF^{+,G}(\tH)$ is generated by the critical families $S$
such that $x$ is a non-constant one-periodic orbit of $H$.

The parametrized Floer equation has the form
\begin{gather*}
\p_s u+J\p_t u = \nabla_E \tH ,\\
\frac{d\lambda}{ds} = \int_{S^1} \nabla_\zeta \tH (u(t,s),t,\lambda(s))\,dt,
\end{gather*}
where $\lambda\colon \R\to S^{2m+1}$ and $u\colon S^1\times\R\to
E$. Thus, when $\tH$ is constant (e.g., near $B$), $u$ is a
holomorphic curve. It follows that \eqref{eq:decr} holds when
$(u,\lambda)$ connects a critical family containing $x$ to a critical
family containing $y$. As a consequence, the complex $\CF^{+,G}(\tH)$
is again filtered by the linking number. Passing to the limit as
$m\to\infty$ and then over $\tH$ we obtain the linking number
filtration on $\SH^{+,G}(W)$.

We denote the resulting homology groups by
$\SH^{+,G}\big(W,\ff^{\leq k}\big)$ and
$\SH^{+,G}\big(W,\ff^k\big)$. As in the non-equivariant case, these
groups fit into the long exact sequence
$$
\ldots\to \SH^{+,G}\big(W,\ff^{\leq k-1}\big)\to
\SH^{+,G}\big(W,\ff^{\leq k}\big)
\to \SH^{+,G}\big(W,\ff^k\big)
\to\ldots
$$
and also inherit the action filtration. By Lemma
\ref{lemma:action-diff}, this filtration is essentially given by the
contact action up to a constant depending on $k$. Moreover, as is easy
to see, the shift map $D$ from the Gysin sequence respects the linking
number filtration.

It is clear that the linking number filtration is preserved by the
continuation maps because the continuation Hamiltonians can also be
taken constant on $U$. As a consequence, the resulting groups are
independent of the original contact form $\alpha$ or, equivalently,
the domain $W$.

\begin{Remark}
  The construction of the linking number filtration can be generalized
  in several ways. Here we only mention one of them. Let $W$ be a
  compact symplectic manifold with contact type boundary $M$ and let
  $\Sigma\subset W$ be a closed codimension-two symplectic submanifold
  such that its intersection number with any sphere $A\in \pi_1(W)$ is
  zero. Note that for $\Sigma=B\subset E$ this intersection number is,
  up to a factor, $-\left<[\omega],A\right>$, and hence the condition
  is satisfied automatically when $\sigma$ is aspherical. Then for
  every loop $x$ in $M$ contractible in $W$ the linking number
  $L_\Sigma(x)$ is well defined and we have a liking number filtration
  on the (equivariant) positive symplectic homology. This filtration
  has the same general properties as in the particular case discussed
  in this section.
\end{Remark}

\subsection{Calculation of the homology groups}
\label{sec:linking-calc}
The calculation of the linking number filtration groups
$\SH^{+}\big(W,\ff^k\big)$ and $\SH^{+,G}\big(W,\ff^k\big)$ can be
easily carried out by using the standard Morse--Bott type arguments in
Floer homology.

\begin{Proposition}
\label{prop:calc}
Let, as above, $W$ be the prequantization disk bundle over a
symplectically aspherical manifold $(B^{2m},\sigma)$. Then
\begin{itemize}
\item[\rm{(i)}] $\SH^{+}\big(W,\ff^k\big)=0$ and $\SH^{+,G}\big(W,\ff^k\big)=0$ for $k\leq 0$,

\item[\rm{(ii)}] $\SH^{+}\big(W,\ff^k\big)=\H_*(M)[2k-m]$ and

\item[\rm{(iii)}] $\SH^{+,G}\big(W,\ff^k\big)=\H_*(B)[2k-m]$ for $k\in\N$,

\end{itemize}
where all homology groups are taken with rational coefficients.
\end{Proposition}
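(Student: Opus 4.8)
The plan is to carry out an explicit Morse--Bott computation using a contact form adapted to the $S^1$-action. Since, as noted at the end of Section~\ref{sec:linking-def}, the groups $\SH^+\big(W,\ff^k\big)$ and $\SH^{+,G}\big(W,\ff^k\big)$ are independent of the contact form supporting $\ker\alpha_0$, the first move is to replace $W$ by the disk bundle $\{r\le\eps\}$, so that the Reeb flow on $M=\{r=\eps\}$ is a reparametrization of the principal $S^1$-action and $\alpha_0$ is (a rescaling of) the contact primitive. One then takes a cofinal family of autonomous admissible Hamiltonians $H=h(r)$ that are constant and negative on a neighborhood $U$ of $B$, with $h$ convex and increasing on an annular region and linear with slope $\notin\CS(\alpha_0)$ at infinity --- precisely the type of Hamiltonian used in Section~\ref{sec:linking-def} to define $\CF^+(H)$.

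The non-constant contractible one-periodic orbits of such an $H$ fall into Morse--Bott families $\Sigma_1,\Sigma_2,\ldots$: the family $\Sigma_k$ sits on the radius $r_k$ at which $h'(r_k)$ equals $k$ times the fiber period, it consists of the $k$-fold iterated fibers, and parametrizing an orbit by its value at $t=0$ identifies $\Sigma_k$ with $M$ (which is contractible in $E=\wh W$, so these orbits do count). Each orbit in $\Sigma_k$ winds $k$ times positively around a fiber, so $L_B\equiv k$ on $\Sigma_k$ by Lemma~\ref{lemma:action-diff}; in particular there are no non-constant orbits with linking number $\le 0$. Hence $\CF^+\big(H,\ff^{\le 0}\big)=0$ for every Hamiltonian in the cofinal family, and the same holds verbatim in the parametrized model of Section~\ref{sec:linking-def}, which gives part~(i).

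For $k\ge 1$, perturb $H$ --- keeping it constant near $B$ --- by a small function realizing a Morse function $g_k$ on each $\Sigma_k\cong M$, so that the perturbed non-constant orbits with $L_B=k$ correspond to the critical points of $g_k$. The quotient complex $\CF^+\big(H,\ff^k\big)$ is generated by exactly these orbits, and any Floer cylinder contributing to its differential connects two orbits of the same linking number, hence has intersection number $0$ with the almost complex submanifold $B$ and stays away from $B$. The standard Morse--Bott/cascade analysis (as in \cite{BO:Duke,BO:Exact,CO}, with transversality as in \cite{FHS}) then shows that for a sufficiently small perturbation the only rigid such trajectories are the negative gradient lines of $g_k$, so that $\HF^+\big(H,\ff^k\big)\cong\H_*(M)$ up to a degree shift; passing to the direct limit over the cofinal family, which only introduces further families $\Sigma_{k'}$ with $k'>k$ and leaves the local picture near $\Sigma_k$ unchanged, yields $\SH^+\big(W,\ff^k\big)=\H_*(M)[s_k]$. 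The shift $s_k$ is the Robbin--Salamon index of the family $\Sigma_k$: since the $S^1$-action preserves $\alpha_0$, the linearized Reeb return map along a fiber is the identity on $\ker\alpha_0$, so $\Sigma_k$ is a totally degenerate Morse--Bott manifold, and a direct computation of $\MURS(\Sigma_k)$ with our normalization (index $m+1$ for a small positive-definite quadratic Hamiltonian) gives $s_k=2k-m$, proving~(ii).

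Finally, for~(iii) one runs the same argument in the parametrized ($G=S^1$-equivariant) Morse--Bott model. Each family $\Sigma_k$ now also carries the loop-reparametrization $G$-action; a $k$-fold iterated fiber has reparametrization stabilizer $\Z/k$, so this action factors through a \emph{free} $S^1$-action on $\Sigma_k\cong M$ whose orbits are the fibers, with quotient $\Sigma_k/S^1=M/S^1=B$. Over $\Q$ the equivariant Morse--Bott complex attached to $\Sigma_k$ therefore computes $\H_*(B)$, with the same grading shift $2k-m$, giving $\SH^{+,G}\big(W,\ff^k\big)=\H_*(B)[2k-m]$. The main obstacle in this program is the Morse--Bott reduction of the third paragraph: one must ensure that for a small perturbation no spurious Floer trajectories --- in particular none approaching $B$, and none exploiting the degeneracy of $\Sigma_k$ --- contribute to the differential of the quotient complex, and that transversality can be arranged within the class of Hamiltonians constant near $B$; this is where \cite{FHS} together with positivity of intersection with $B$ does the work. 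Pinning down the exact value $s_k=2k-m$ of the grading shift via the index of the degenerate family $\Sigma_k$ is the other point requiring care, but it is routine once the linearized return map has been identified.
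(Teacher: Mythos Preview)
Your proposal is correct and follows essentially the same route as the paper's proof: a radial admissible Hamiltonian whose non-constant orbits are the Morse--Bott families $\Sigma_k$ of $k$-fold iterated fibers with $L_B=k$, followed by the standard Morse--Bott reduction and the Robbin--Salamon index computation giving the shift $2k-m$. The paper's proof is in fact much terser than yours---it simply sets up $H=h(r^2)$, observes that the orbit levels have linking number $k$, and defers both the Morse--Bott argument and the index calculation to \cite{BO:Duke,Poz,GG:convex,GG04}---so your write-up is, if anything, a more explicit unpacking of the same strategy.
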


In particular,
\begin{equation}
\label{eq:top-degree}
\SH^{+,G}_{2k+m}\big(W,\ff^k\big)=\Q \textrm{ for } k\in\N
\end{equation}
and, as expected, $\SH^{+,G}\big(W,\ff^k\big)$ is isomorphic to the
contact homology groups of $(M,\xi)$ for the free homotopy class
$\ff^k$, cf.\ \cite{BO17}.  

\begin{proof}
  Consider an admissible Hamiltonian of the form $H=h(r^2)$, where $h$
  is monotone increasing, convex function equal to zero on
  $[0,1-\eps]$ and to $ar^2+b$ on $[1+\eps,\infty)$, where $\eps>0$ is
  sufficiently small for a fixed $a$. The non-trivial one-periodic
  orbits of $H$ occur on Morse--Bott non-degenerate levels
  $r=r_1,\ldots,r_l$, where $l=\lfloor{a/\pi}\rfloor$, when the form
  $\alpha_0$ is normalized to have integral $\pi$ over the fiber. The
  linking number of the orbits on the level $r=r_k$ with $B$ is
  exactly $k$. Now the proposition follows by the standard Morse--Bott
  argument in Floer homology (see, e.g., \cite{BO:Duke,Poz} and also
  \cite{GG:convex}) together with an index calculation as in, e.g.,
  \cite{GG04}.
\end{proof}

Comparing Case (iii) of Proposition \ref{prop:calc} and Case (ii) of
Proposition \ref{cor:hom-calc}, we see that
\begin{equation}
\label{eq:sum-filtration}
\SH^{+,G}(W)=\bigoplus_{k\in\N}\SH^{+,G}\big(W,\ff^k\big),
\end{equation}
although the isomorphism is not canonical in contrast with
\eqref{eq:cor-tensor}. Note that there is no similar isomorphism in
the non-equivariant case:
$\SH^{+}(W)\neq \bigoplus_{k\in\N}\SH^{+}\big(W,\ff^k\big)$ and, in
fact, the sum on the right is much bigger than $\SH^{+}(W)$.

\begin{Remark}
  Although this is not immediately obvious, one can expect the natural
  maps $\SH^{+,G}\big(W,\ff^{\leq k}\big)\to \SH^{+,G}(W)$ to be
  monomorphisms, resulting in a filtration of $\SH^{+,G}(W)$ by the
  groups $\SH^{+,G}\big(W,\ff^{\leq k}\big)$. Then the right hand side
  of \eqref{eq:sum-filtration} would be the graded space associated
  with this filtration. On the other hand, the decomposition
  \eqref{eq:cor-tensor} gives rise to a similarly looking filtration
  $\bigoplus_{q\leq k}\H_*(B)\otimes \H_{2q}(\CP^\infty)$. However,
  these two filtrations are different. Indeed, the shift operator $D$
  is strictly decreasing with respect to the filtration coming from
  \eqref{eq:cor-tensor} and thus the induced operator on the graded
  space is zero. On the other hand, under the identification
  $\SH^+(W,\ff^k)\cong \H_*(B)$ from Case (ii) of Proposition
  \ref{prop:calc}, the operator $D$ is given by pairing with
  $[\sigma]\in \H^2(B)$ (see \cite[Prop.\ 2.22]{GG:convex}) and this
  pairing is non-trivial. To put this somewhat informally, the
  decompositions \eqref{eq:cor-tensor} and \eqref{eq:sum-filtration}
  do not match term-wise.
\end{Remark}

\begin{Remark}[Lusternik--Schnirelmann inequalities] We can also use
  the linking number filtration to extend the Lusternik--Schnirelmann
  inequalities established in \cite[Thm.\ 3.4]{GG:convex} for exact
  fillings to prequantization bundles. Namely, assume that all closed
  Reeb orbits on $M_\alpha$ are isolated. Then, for any
  $\beta\in \SH^{+,G}(W)$, we have the \emph{strict} inequality
\begin{equation}
\label{eq:LS}
\s(\beta, W_\alpha)>\s(D(\beta), W_\alpha),
\end{equation}
where the right hand side is by definition $-\infty$ when
$D(\beta)=0$. In particular, when the orbits are isolated, 
$$
0< \s^G_{\zeta,0}(W)< \s^G_{\zeta,1}(W)<
\s^G_{\zeta,2}(W)<\ldots
$$
in \eqref{eq:cor-capacities} for every $\zeta\in \H_*(B)$. For the
sake of brevity we only outline the proof of \eqref{eq:LS}. Consider a
``sufficiently large'' admissible autonomous Hamiltonian $H$ constant
on $U$. Then, by \cite[Thm.\ 2.12]{GG:convex}, the strict
Lusternik--Schnirelmann inequality holds for $H$. As a consequence,
there exist two one-periodic orbits $x$ and $y$ of $H$ contractible in
$E$, the carriers for the corresponding action selectors for $\beta$
and $D(\beta)$ in $\HF^{+,G}(H)$, such that $\CA_H(x)>\CA_H(y)$ and
$x$ and $y$ are connected by a solution $u$ of the Floer equation.  As
in the proof of \cite[Thm.\ 3.4]{GG:convex}, we need to show that this
inequality remains strict as we pass to the limit. When $x$ and $y$
are in the same free homotopy class (i.e., $L_B(x)=L_B(y)$), that
proof goes through word-for-word. When, $L_B(x)>L_B(y)$, the Floer
trajectory $u$ has to cross $U$, where it is a holomorphic curve,
passing through a point of $B$. By the standard monotonicity argument,
$\CA_H(x)-\CA_H(y)=E(u)>\eps>0$, where $E(u)$ is the energy of $u$ and
$\eps$ is independent of $H$.

\end{Remark}

\section{Contact Conley conjecture}
\label{sec:CCC}

\subsection{Local symplectic homology}
\label{sec:local}
In this section we recall the definitions of the (equivariant) local
symplectic homology and of symplectically degenerate maxima (SDM) for
Reeb flows -- the ingredients essential for the statement and the proof of
the non-degenerate case of the contact Conley conjecture.

Let $x$ be an isolated closed Reeb orbit of period $T$, not
necessarily simple, for a contact form $\alpha$ on $M^{2m+1}$. The
Reeb vector field coincides with the Hamiltonian vector field of the
Hamiltonian $r$ on $M\times (1-\eps,1+\eps)$ equipped with the symplectic
form $d(r\alpha)$. Consider now the Hamiltonian $H=T\cdot h(r)$, where
$h'(1)=1$ and $h''(1)>0$ is small. On the level $r=1$, this flow is
simply a reparametrization of the Reeb flow and the orbit $x$ 
corresponds to an isolated one-periodic orbit $\tx$ of $H$. By
definition, the \emph{equivariant local symplectic homology}
$\SH^G(x)$ of $x$ is the local $G=S^1$-equivariant Floer homology
$\HF^G(\tx)$ of $\tx$; see \cite[Sect.\ 2.3]{GG:convex}. It is easy to
see that $\SH^G(x):=\HF^G(\tx)$ is independent of the choice of the
function $h$. Note also that this construction is purely local: it
only depends on the germ of $\alpha$ along $x$. In what follows, we will
use the notation $x$ for both orbits $\tx$ and $x$.

These local homology groups do not carry an absolute grading. To fix
such a grading by the Conley--Zehnder index, it is enough to pick a
symplectic trivialization of $T\big(M\times (1-\eps,1+\eps)\big)|_x$.
Depending on a specific setting, there can be different natural ways
to do this. For instance, one can start with a trivialization of the
contact structure $\xi=\ker\alpha$ along $x$; for this trivialization
naturally extends to a trivialization of
$T\big(M\times (1-\eps,1+\eps)\big)|_x$.  However, we are interested
in the situation where $M$ has an aspherical filling $W$ and $x$ is
contractible in $W$. Then it is more convenient to obtain a
trivialization of
$$
T\big(M\times (1-\eps,1+\eps)\big)|_x=TW|_x
$$
from a capping of $x$ in $W$. In any event, when $x$ is iterated, i.e.,
$x=y^k$ where $y$ is simple and also contractible in $W$, we will
always assume that the trivialization of $TW|_x$ comes from a
trivialization along~$y$. This is essential to guarantee that the mean
index is homogeneous under iterations: $\hmu(x)=k\hmu(y)$.

\begin{Example}
\label{ex:non-deg}
  Assume that $x$ is non-degenerate. Then $\SH^G(x)=\Q$, concentrated
  in degree $\mu(x)$, when $x$ is good; and $\SH^G(x)=0$ when $x$ is
  bad; see \cite[Sect.\ 2.3]{GG:convex} and, in particular, Examples
  2.18 and 2.19 therein.
\end{Example}

Furthermore, it is worth keeping in mind that $\tx$ is necessarily
degenerate even when $x$ is non-degenerate. Indeed, the linearized
flow along $\tx$ has 1 as an eigenvalue and its algebraic multiplicity
is at least 2.

As a consequence, $\SH^G(x)$ is supported in the interval of length
$2m$ centered at the mean index $\hmu(x)$ of $x$, i.e., only for the
degrees in this range the homology can be non-zero. (If $\tx$ were
non-degenerate the length of the interval would be $2m+2$.) In other
words, using self-explanatory notation, we have
\begin{equation}
\label{eq:support0}
\supp\SH^G(x)\subset [\hmu(x)-m, \,\hmu(x)+m];
\end{equation}
see \cite[Prop.\ 2.20]{GG:convex}. Moreover,
\begin{equation}
\label{eq:support}
\supp\SH^G(x)\subset (\hmu(x)-m, \,\hmu(x)+m)
\end{equation}
when $x$ is \emph{weakly non-degenerate}, i.e., at least one of its Floquet
multipliers is different from 1.
 
\begin{Remark}
\label{rmk:sympl-contact}
Conjecturally, when $x$ is the $k$th iteration of a simple orbit,
\begin{equation}
\label{eq:sympl-contact}
\SH^G(x)\cong\HC(x)\cong \HF(\varphi)^{\Z_k},
\end{equation}
where $\HC(x)$ is the local contact homology of $x$ introduced in
\cite{HM} (see also \cite{GHHM}), $\varphi$ is the return map of $x$,
and $\HF(\varphi)^{\Z_k}$ is the $\Z_k$-invariant part in the local
Floer homology of $\varphi$ with respect to the natural
$\Z_k$-action. When $x$ is simple, i.e., $k=1$, this has been
proved. Indeed, in this case, $\HC(x)$ is rigorously defined and the
first isomorphism is a local version of the main result in
\cite{BO17}. The second isomorphism is established in \cite{HM} and
can also be thought of as a local variant of the isomorphism between
the Floer and contact homology from \cite{EKP}. When $k\geq 1$, there are
foundational problems with the construction of $\HC(x)$ common to many
versions of the contact homology (see, however, \cite{Ne}) and proving
directly that the first and the last term in \eqref{eq:sympl-contact}
are isomorphic might be a simpler approach. We will return to this
question elsewhere.
\end{Remark}

When $M$ is compact, the groups $\SH^G(x)$, where $x$ ranges over all
closed Reeb orbits of $\alpha$ (not necessarily simple), are the
building blocks for $\SH^{+,G}(W)$ where $W$ is a symplectically
aspherical filling of $M$. For instance, vanishing of the local
homology groups for all $x$ in a certain degree $d(x)$ implies
vanishing of the global (i.e., total) homology in a fixed degree
$d$. However, there might be a shift of degrees, i.e., $d\neq d(x)$,
which depends on the choice of trivializations along the orbits
$x$. This shift is obviously zero when $x$ is contractible in $W$ and
the trivialization of $T_xW$ comes from a capping of $x$. The same
holds for the filtered (by the action or the linking number) homology
groups. In particular, we have

\begin{Lemma} 
\label{lemma:local-to-global}
Let $\alpha$ be a contact form on the prequantization $S^1$-bundle over
a symplectically aspherical manifold $(B^{2m},\sigma)$.  Assume that
all closed Reeb orbits $x$ in the class $\ff^k$ are isolated and
$\SH^G_q(x)=0$ for all such $x$ with respect to the trivialization of
$T_xW$ coming from a capping of $x$ in $W$. Then
$\SH^{+,G}_q\big(W,\ff^k\big)=0$.
\end{Lemma}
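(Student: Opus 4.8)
The plan is to run the standard ``local-to-global'' passage in equivariant Floer homology, organized by the linking-number filtration: build a cofinal family of admissible Hamiltonians for which the positive equivariant complex filtered by $L_B$ splits, up to a spectral sequence, into the local equivariant symplectic homologies $\SH^G(x)$ of the closed Reeb orbits $x$ in the class $\ff^k$.

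Since the groups $\SH^{+,G}(W,\ff^j)$ are independent of the contact form (see the end of Section~\ref{sec:linking-def}), I would work with $W=W_\alpha$ for the given $\alpha$. As in the proof of Proposition~\ref{prop:calc}, take a cofinal family of autonomous admissible Hamiltonians $H=h(r^2)$, constant near $B$ so that the linking-number filtration is defined, with slope at infinity tending to $+\infty$; for the equivariant version one replaces $H$ by the parametrized Hamiltonians $\tH$ over $S^{2m+1}$ from Section~\ref{sec:linking-def} and lets $m\to\infty$. For such $H$ (resp.\ $\tH$), the non-constant one-periodic orbits contractible in $E$ with $L_B=k$ lie on Morse--Bott levels and are in bijection, up to parametrization, with the closed Reeb orbits $x$ of $\alpha$ in the class $\ff^k$ whose period (contact action) lies below a threshold that grows without bound along the family. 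By hypothesis each such $x$ is isolated, so the corresponding cluster of one-periodic orbits of $H$ --- a Morse--Bott circle together with a small neighborhood, or a critical family in the equivariant picture --- is an isolated invariant set of the Floer flow, whose local equivariant Floer homology is isomorphic to $\SH^G(x)$.

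Next I would filter the complex $\CF^{+,G}(H,\ff^k)$ by the Hamiltonian action. By Lemma~\ref{lemma:action-diff}, on the class $\ff^k$ one has $\CA_\omega=\CA_\alpha-\frac{\pi}{2}k$, so the clusters associated with closed Reeb orbits of distinct period occupy disjoint narrow action windows (windows that happen to coincide only enlarge a direct summand of the associated graded, which is harmless). Hence the associated graded of this filtration is $\bigoplus_x \CF^{+,G}_{\mathrm{loc}}(x)$, the sum over closed Reeb orbits $x\in\ff^k$, and the induced spectral sequence has $E^1$-page $\bigoplus_x \SH^G(x)$ and converges to $\HF^{+,G}(H,\ff^k)$. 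The point requiring care is the grading: every $x\in\ff^k$ is contractible in $W$, and I would fix throughout the trivialization of $T_xW$ coming from a capping of $x$ in $W$ --- exactly the normalization in the statement --- so that the Conley--Zehnder degree of a generator in $\CF^{+,G}(H,\ff^k)$ agrees with its degree in $\SH^G(x)$ with no shift (for iterated $x=y^j$, which also contribute and satisfy $j\mid k$, one uses the capping trivialization pulled back along $y$, as in Section~\ref{sec:local}).

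Finally, the hypothesis $\SH^G_q(x)=0$ for \emph{all} closed Reeb orbits $x$ in the class $\ff^k$ (iterated orbits included) forces $E^1_q=0$, hence $\HF^{+,G}_q(H,\ff^k)=0$ for every member of the cofinal family, and passing to the direct limit yields $\SH^{+,G}_q(W,\ff^k)=\varinjlim_H \HF^{+,G}_q(H,\ff^k)=0$. The main obstacle I anticipate is foundational rather than conceptual: verifying that the local (equivariant) Floer homology of the cluster of one-periodic orbits near $x$ really is $\SH^G(x)$ \emph{with the claimed grading} --- i.e.\ that the capping trivialization used here matches the one implicit in the definition of $\SH^G(x)$ --- and that the Morse--Bott/cascade description of $\CF^{+,G}(H,\ff^k)$ for a possibly degenerate $\alpha$ is set up so that the action filtration has precisely the stated associated graded. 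These are standard but delicate points in the Morse--Bott Floer package; cf.\ \cite{BO:Duke,Poz}, \cite[Sect.\ 2.3]{GG:convex}, and the discussion of $\SH^G(x)$ and its grading in Section~\ref{sec:local}.
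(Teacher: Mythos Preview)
Your proposal is correct and is essentially the same approach the paper takes: the paper's entire argument is the one-line observation that $\HF^{+,G}_q(H,\ff^k)=0$ for a suitable cofinal family of admissible Hamiltonians, together with a parenthetical remark that ``more generally'' there is a spectral sequence starting from $\bigoplus_x\SH^G(x)$ and converging to the homology in question, with references to \cite{GGM:CC} and \cite{GG:PR}. You have simply written out that spectral-sequence argument (via the action filtration on the linking-number graded piece) in detail, including the grading normalization the paper leaves implicit; the one cosmetic slip is that the form $H=h(r^2)$ is appropriate for $\alpha_0$, not for an arbitrary $\alpha$, but replacing it by an admissible $H$ that is a function of the cylindrical coordinate of $\wh{W_\alpha}$ fixes this without affecting the argument.
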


The lemma readily follows from the observation that under the above
conditions $\SH^{+,G}_q\big(H,\ff^k\big)=0$ for a suitable cofinal
family of admissible Hamiltonians $H$. (More generally, there is a
spectral sequence starting with $\bigoplus_x\SH^G(x)$ and converging
to $\SH^{+,G}(W)$, which also implies the lemma. We do not need this
fact and we omit its proof for the sake of brevity, for it is quite
standard; see, e.g., \cite{GGM:CC} where such a spectral sequence is
constructed for the contact homology and \cite[Sect.\ 2.2.2]{GG:PR}
for a discussion of this spectral sequence for the Floer homology.)

Next recall that an iteration $k$ of $x$ is called \emph{admissible}
when none of the Floquet multipliers of $x$, different from 1, is a
root of unity of degree $k$. For instance, every $k$ is admissible when
no Floquet multiplier is a root of unity or, as the opposite extreme,
when $x$ is totally degenerate, i.e., all Floquet multipliers are
equal to 1. Furthermore, every sufficiently large prime $k$ (depending
on $x$) is admissible.

For our purposes it is convenient to adopt the following
definition. Namely, $x$ is a \emph{symplectically degenerate maximum (SDM)}
if there exists a sequence of admissible iterations $k_i\to\infty$ such that 
\begin{equation}
\label{eq:sdm}
\SH_q^G(x^{k_i})\neq 0\textrm{ for } q=\hmu(x^{k_i})+m=k_i\hmu(x)+m.
\end{equation}
This condition is obviously independent of the choice of a
trivialization along $x$. It follows from \eqref{eq:support} that then
$x^{k_i}$, and hence $x$, must be totally degenerate. Thus the
definition can be rephrased as that $x$ is totally degenerate and
\eqref{eq:sdm} holds for some sequence $k_i\to\infty$.

\begin{Remark} 
  Continuing the discussion in Remark \ref{rmk:sympl-contact} note
  there are several, hypothetically equivalent, ways to define a
  closed SDM Reeb orbit. The definition above is a contact analog of
  the original definition of a Hamiltonian SDM from \cite{Gi:CC} and
  it lends itself conveniently to the proof of the non-degenerate case
  of the contact Conley conjecture. Alternatively, a contact SDM was
  defined in \cite{GHHM} as a closed isolated Reeb orbit $x$ with
  $\HC_{\hmu(x)+m}(x)\neq 0$. By \eqref{eq:sympl-contact}, its
  symplectic homology analogue would be that
  $\SH^G_{\hmu(x)+m}(x)\neq 0$. For a simple orbit this is equivalent
  to that the fixed point of $\varphi$ is an SDM. Furthermore, one can
  show that the $\Z_k$-action on the homology is trivial for totally
  degenerate orbits and thus
  $\HF(\varphi)^{\Z_k}=\HF(\varphi)$. Hence, the equivalence of the
  two definitions would then follow from the identification of the
  first and the last term in \eqref{eq:sympl-contact} combined with
  the persistence of the local Floer homology; \cite{GG:gap}.
\end{Remark}

\subsection{Conley conjecture}
\label{sec:CC}
Now we are in the position to give a symplectic homology proof of
the non-degenerate case of the contact Conley conjecture, a contact
analogue of the main result from \cite{SZ}.

\begin{Theorem}[Contact Conley Conjecture] 
\label{thm:ccc}
Let $M\to B$ be a prequantization bundle and let $\alpha$ be a contact
form on $M$ supporting the standard (co-oriented) contact structure
$\xi$ on $M$.  Assume that
\begin{itemize}
\item[\rm{(i)}] $B$ is symplectically aspherical and
\item[\rm{(ii)}] $\pi_1(B)$ is torsion free.
\end{itemize}
Then the Reeb flow of $\alpha$ has infinitely many simple closed Reeb
orbits with contractible projections to $B$, provided that none of the
orbits in the free homotopy class $\ff$ of the fiber is an SDM. Assume
in addition that the Reeb flow of $\alpha$ has finitely many closed
Reeb orbits in the class $\ff$. Then for every sufficiently large
prime $k$ the Reeb flow of $\alpha$ has a simple closed orbit in the
class $\ff^k$.
\end{Theorem}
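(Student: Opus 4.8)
The plan is to argue by contradiction, extracting from the hypothetical finiteness of the relevant set of closed Reeb orbits a closed orbit in the fiber class $\ff$ which is an SDM, contrary to assumption. The engine is Proposition~\ref{prop:calc}(iii) --- which is where hypothesis (i) enters --- giving $\SH^{+,G}_{2k+m}(W,\ff^k)=\Q\neq 0$ for every $k\in\N$, together with the ``local-to-global'' Lemma~\ref{lemma:local-to-global}: if all closed Reeb orbits in $\ff^k$ are isolated and $\SH^G_{2k+m}(x)=0$ for each of them with respect to a capping trivialization, then $\SH^{+,G}_{2k+m}(W,\ff^k)=0$. So first I would record that, for every $k$, there must exist a closed Reeb orbit in $\ff^k$ carrying this class, i.e.\ with $\SH^G_{2k+m}(x)\neq 0$.

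Next I would assume, towards a contradiction, that there are only finitely many simple closed Reeb orbits with contractible projection to $B$, say $y_1,\dots,y_N$ with linking numbers $\ell_1,\dots,\ell_N$. Then every closed Reeb orbit is an iterate $y_i^j$; since images of orbits with bounded period take only finitely many values, each orbit in every $\ff^k$ is isolated, so Lemma~\ref{lemma:local-to-global} applies. Hypothesis (ii) enters here: if $x=z^j$ lies in $\ff^k$, its underlying simple orbit $z$ projects to a loop $\gamma$ in $B$ with $\gamma^j=1$ in $\pi_1(B)$, and torsion-freeness forces $\gamma=1$; thus $z$ is contractible in $E$ and $x=y_i^{j}$ with $j\,\ell_i=k$. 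Since $\ell_i\mid p$ and $p$ prime force $\ell_i\in\{1,p\}$, and only finitely many $\ell_i$ are prime, for all sufficiently large primes $p$ the orbits in $\ff^p$ are exactly the iterates $y_i^p$ with $\ell_i=1$; hence for each such $p$ there is an index $i(p)$ with $L_B(y_{i(p)})=1$ and $\SH^G_{2p+m}(y_{i(p)}^p)\neq 0$. By the pigeonhole principle I would fix a single simple orbit $y$, with $L_B(y)=1$ and hence $y\in\ff$, and an infinite sequence of primes $p_i\to\infty$ along which $\SH^G_{2p_i+m}(y^{p_i})\neq 0$.

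The final step is to recognize $y$ as an SDM. Using the trivialization along $y$ coming from a capping, the mean index is homogeneous, $\hmu(y^p)=p\,\hmu(y)$, so the support estimate~\eqref{eq:support0} applied to $y^{p_i}$ gives $\hmu(y^{p_i})-m\le 2p_i+m\le \hmu(y^{p_i})+m$, that is $2\le\hmu(y)\le 2+2m/p_i$; letting $i\to\infty$ forces $\hmu(y)=2$. Hence $2p_i+m=\hmu(y^{p_i})+m$ and $\SH^G_{\hmu(y^{p_i})+m}(y^{p_i})\neq 0$ for all $i$, and since every sufficiently large prime is an admissible iteration of $y$, the sequence $(p_i)$ (after discarding finitely many terms) witnesses condition~\eqref{eq:sdm}: $y$ is an SDM lying in $\ff$, the desired contradiction. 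For the last assertion I would keep the no-SDM hypothesis and add that $\ff$ contains only finitely many closed Reeb orbits --- necessarily simple, with linking number $1$, and isolated; if the conclusion failed, there would be arbitrarily large primes $p$ with no simple orbit in $\ff^p$, and then, $p$ being prime, the only orbits in $\ff^p$ are the finitely many iterates $y_i^p$ with $L_B(y_i)=1$, so the same carrier/pigeonhole/index argument again produces an SDM in $\ff$.

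The step I expect to demand the most care --- though it is bookkeeping rather than a new difficulty --- is the very first one: passing from $\SH^{+,G}_{2k+m}(W,\ff^k)\neq 0$ to a \emph{single} Reeb orbit with non-vanishing local homology in exactly the degree $2k+m$. This rests on carefully matching the linking-number grading of Proposition~\ref{prop:calc}, the capping trivialization used in Lemma~\ref{lemma:local-to-global}, and the Conley--Zehnder/mean-index normalization behind~\eqref{eq:support0}. Once these are aligned, the rest is the classical Conley-conjecture dichotomy, with the SDM hypothesis playing the role of non-degeneracy and hypothesis (ii) guaranteeing that the SDM produced genuinely lies in the fiber class.
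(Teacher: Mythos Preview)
Your proposal is correct and follows essentially the same approach as the paper: both use Proposition~\ref{prop:calc}(iii) together with Lemma~\ref{lemma:local-to-global} and the support estimate~\eqref{eq:support0} to force an SDM in the fiber class. The only differences are organizational: the paper quotes Lemma~\ref{lemma:torsion-free} where you argue the torsion-free step directly, and the paper splits into the cases $\hmu(x_j)=2$ versus $\hmu(x_j)\neq 2$ (handling the latter by a direct index-gap estimate) where you instead pigeonhole onto a single orbit and pass to the limit to conclude $\hmu(y)=2$; also, the paper reduces the first assertion to the second by noting it is enough to assume finitely many orbits in $\ff$, whereas you negate the first conclusion directly---both routes reach the same contradiction.
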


Before proving this theorem let us compare it with other results on
the contact Conley conjecture. Theorem \ref{thm:ccc} was proved in
\cite{GGM:CC,GGM:CC2} without the assumption that none of the orbits
is an SDM. However, that argument relied on the machinery of
linearized contact homology which is yet to be made completely
rigorous. (See, however, \cite{Ne} where some of the foundational
issues have been resolved in dimension three.) The key difficulty in
translating the proof from those two papers into the symplectic
homology framework in the non-degenerate case was purely conceptual:
the grading by the free homotopy classes $\ff^k$ is crucial for the
proof and while the cylindrical contact homology is graded by $\ff^k$,
the symplectic homology is not. The linking number filtration is an
analogue of this grading which allows us to overcome this problem. On
the other hand, removing the ``non-SDM'' assumption requires replacing
the contact homology by the symplectic homology in the main result of
\cite{GHHM}. This is a non-trivial, but technical, issue and we will
return to it elsewhere.

There are also some minor discrepancies between the conditions of
Theorem \ref{thm:ccc} and its counterpart in
\cite{GGM:CC,GGM:CC2}. Namely, there the base $B$ is assumed to be
aspherical, i.e., $\pi_r(B)=0$ for $r\geq 2$, but as is pointed out in
\cite[Sect.\ 2.2]{GGM:CC}, this condition is only used to make sure
that $\sigma$ is aspherical and $\pi_1(B)$ is torsion free. Then, the
class $c_1(\xi)$ is required \emph{ibid} to  be atoroidal. This is a
minor technical restriction imposed only for the sake of simplicity
and it does not arise in the symplectic homology setting because the
fiber is contractible in $W$.

\begin{proof}[Proof of Theorem \ref{thm:ccc}] The argument closely
  follows the reasoning from \cite{GGM:CC}, which in turn is based on
  the proof in \cite{SZ}. We need the following simple, purely
  algebraic fact, proved in \cite[Lemma 4.2]{GGM:CC}, which only uses
  the conditions that $\sigma$ is aspherical and that $\pi_1(B)$ is
  torsion free.

\begin{Lemma}
\label{lemma:torsion-free}
Under the conditions of the theorem, for every $k\in\N$ the only
solutions $\fh\in\tpi_1(P)$ and $l\geq 0$ of the equation
$\fh^l=\ff^k$ are $\fh=\ff^r$, for some $r\in\N$, and $l=k/r$. (In
particular, $\ff$ is primitive.)
\end{Lemma}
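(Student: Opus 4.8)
This is a purely group-theoretic statement, once we recall the structure of $\pi_1$ from Section~\ref{sec:prequant}. The plan is as follows. Because $\sigma$ is aspherical, the boundary homomorphism $\pi_2(B)\to\pi_1(S^1)$ in the homotopy exact sequence of the bundle $M\to B$ vanishes, so we have a short exact sequence
\[
1\to\Z\to\pi_1(M)\to\pi_1(B)\to 1
\]
in which the subgroup $\Z$ is the (infinite cyclic) fiber subgroup, generated by an element $t$ whose conjugacy class is $\ff$; cf.\ \cite[Lemma 4.1]{GGM:CC}. The first point I would record is that this $\Z$ is \emph{central} in $\pi_1(M)$ --- the standard fact that, for an action of a connected topological group $G$ on a space, the image of $\pi_1(G)$ in the $\pi_1$ of the space is central, applied to the $S^1$-action on $M$. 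Consequently the conjugacy class of $t^k$ is the singleton $\{t^k\}$, so that, for any representative $h\in\pi_1(M)$ of $\fh\in\tpi_1(M)=\tpi_1(P)$, the relation $\fh^l=\ff^k$ is equivalent to the genuine group equation $h^l=t^k$ in $\pi_1(M)$.

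Granting this, the argument finishes by projecting to the base. Let $p\colon\pi_1(M)\to\pi_1(B)$ denote the quotient map. Applying $p$ to $h^l=t^k$ and using $t\in\ker p$ gives $p(h)^l=1$. If $l=0$, then $t^k=1$, which is impossible for $k\geq 1$ since the fiber subgroup is infinite cyclic; hence $l\geq 1$ and $p(h)$ is a torsion element of $\pi_1(B)$, therefore trivial by hypothesis~(ii). Thus $h$ lies in the central $\Z$, say $h=t^r$ with $r\in\Z$, and then $t^{rl}=t^k$ forces $rl=k$; since $k\geq 1$ and $l\geq 1$ we must have $r\geq 1$, i.e.\ $r\in\N$ and $l=k/r$. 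Finally $\fh$, being the conjugacy class of $t^r$, equals $\ff^r$. Taking $k=1$ gives $r=l=1$, so $\ff$ is primitive. This is exactly \cite[Lemma 4.2]{GGM:CC}.

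There is no analytic or geometric content here; the one step worth stating cleanly is the passage from ``$h^l$ is conjugate to $t^k$'' to ``$h^l=t^k$'', which is precisely where the centrality of the fiber class --- a consequence of $M\to B$ being a principal $S^1$-bundle --- and the asphericity of $\sigma$ --- which guarantees that the fiber has infinite order and that the extension above is central --- enter. Everything else is elementary manipulation inside that central extension.
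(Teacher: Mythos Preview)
Your argument is correct and is exactly the standard proof; the paper itself does not give a proof but merely cites \cite[Lemma 4.2]{GGM:CC}, so there is nothing further to compare. One small wording slip in your final paragraph: you attribute the centrality of the extension to the asphericity of $\sigma$, but (as you correctly said a few lines earlier) centrality comes from the principal $S^1$-bundle structure; asphericity is what makes the sequence short exact and the fiber subgroup infinite cyclic.
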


Next, without loss of generality we may assume that there are only
finitely many closed Reeb orbits in the class $\ff$, for otherwise
there is nothing to prove. We denote these orbits by $x_1,\ldots,x_r$
and set $\Delta_j=\hmu(x_j)$, where we equipped $T_{x_i}W$ with a
trivialization coming from a capping of $x_i$ in $W$. Let $k$ be a large
prime. Then, unless there is a simple closed Reeb orbit in the class
$\ff^k$, every closed Reeb orbit in this class has the form $x_j^k$ by
Lemma \ref{lemma:torsion-free}.

We will show that in this case $\SH^{+,G}_{m+2k}\big(W,\ff^k\big)=0$
when $k$ is large, which contradicts Proposition \ref{prop:calc} and
more specifically \eqref{eq:top-degree}. By Lemma
\ref{lemma:local-to-global}, it is enough to show that
\begin{equation}
\label{eq:hom=0}
\SH_{m+2k}^G(x^{k}_j)= 0.
\end{equation}
Pick a prime $k$ so large that $k |\Delta_j-2|>2m$ for all $x_j$ with
$\Delta_j\neq 2$. Then, since $\hmu(x_j^k)=k\Delta_j$, we have
$$
\supp\SH^G(x^{k}_j)\subset [k\Delta_j-m,k\Delta_j+m]
$$
by \eqref{eq:support0}, and hence $m+2k$ is not in the support. Thus
\eqref{eq:hom=0} holds in this case. On the other hand, when
$\Delta_j=2$, \eqref{eq:hom=0} holds when $k$ is sufficiently large,
for otherwise \eqref{eq:sdm} would be satisfied for some sequence of
primes $k_i\to\infty$ and $x_j$ would be an SDM. This completes the
proof of the theorem.
\end{proof}

\end{document}